\tikzset{faded/.style={gray,very thin}}
\tikzset{vertex/.style={draw,circle,minimum size=20pt,inner sep=0pt}}
\tikzset{novertex/.style={circle,minimum size=10pt,inner sep=0pt}}
\tikzset{point/.style={circle,minimum size=0pt,inner sep=0pt}}
\tikzset{blackvertex/.style={draw,circle,minimum size=10pt,inner sep=0pt, fill=black}}
\tikzset{smallbv/.style={draw,circle,minimum size=6pt,inner sep=0pt, fill=black}}
\tikzset{grayvertex/.style={draw,circle,minimum size=10pt,inner sep=0pt, fill=gray}}
\tikzset{redvertex/.style={draw,circle,minimum size=10pt,inner sep=0pt, fill=red}}
\tikzset{redvertexfaded/.style={draw,circle,faded,minimum size=10pt,inner sep=0pt, fill=red!50}}
\tikzset{greenvertex/.style={draw,circle,minimum size=10pt,inner sep=0pt, fill=green}}
\tikzset{greenvertexfaded/.style={draw,circle,faded,minimum size=10pt,inner sep=0pt, fill=green!50}}
\tikzset{bluevertex/.style={draw,circle,minimum size=10pt,inner sep=0pt, fill=blue}}
\tikzset{bluevertexfaded/.style={draw,circle,faded,minimum size=10pt,inner sep=0pt, fill=blue!50}}
\tikzset{yellowvertex/.style={draw,circle,minimum size=10pt,inner sep=0pt, fill=yellow}}
\tikzset{yellowvertexfaded/.style={draw,circle,faded,minimum size=10pt,inner sep=0pt, fill=yellow!50}}
\tikzset{edge/.style = {->,> = latex',very thick}}
\tikzset{tedge/.style = {->,> = latex',thick}}
\tikzset{boldedge/.style = {->,> = latex',line width=1.5pt}}
\tikzset{rededge/.style = {->,> = latex',ultra thick, draw=red}}
\tikzset{uedge/.style = {-,> = latex',very thick}}
\tikzset{
    invisible/.style={opacity=0,text opacity=0},
    visible on/.style={alt=#1{}{invisible}},
    alt/.code args={<#1>#2#3}{%
      \alt<#1>{\pgfkeysalso{#2}}{\pgfkeysalso{#3}} 
    },
}
\newcommand{\julio}[1]{\textcolor{red}{Julio: #1}}
\DeclareMathOperator{\hn}{hn}
\DeclareMathOperator{\gn}{gn}
\DeclareMathOperator{\itn}{in}
\DeclareMathOperator{\gc}{g}
\DeclareMathOperator{\pc}{P_3}
\DeclareMathOperator{\psc}{P_3^*}
\DeclareMathOperator{\ogc}{\overrightarrow{g}}
\DeclareMathOperator{\opc}{\overrightarrow{P_3}}
\DeclareMathOperator{\opsc}{\overrightarrow{P_3^*}}
\DeclareMathOperator{\xc}{{\cal X}}
\DeclareMathOperator{\oxc}{\overrightarrow{\xc}}
\DeclareMathOperator{\arc}{arc}
\DeclareMathOperator{\ohn}{\overrightarrow{\hn}}
\DeclareMathOperator{\ogcn}{\overrightarrow{\gn}}
\DeclareMathOperator{\oin}{\overrightarrow{\itn}}
\DeclareMathOperator{\ing}{\itn_{\gc}}
\DeclareMathOperator{\inp}{\itn_{\pc}}
\DeclareMathOperator{\inps}{\itn_{\psc}}
\DeclareMathOperator{\inx}{\itn_{\xc}}
\DeclareMathOperator{\hng}{\hn_{\gc}}
\DeclareMathOperator{\hnp}{\hn_{\pc}}
\DeclareMathOperator{\hnps}{\hn_{\psc}}
\DeclareMathOperator{\hnx}{\hn_{\xc}}
\DeclareMathOperator{\oinx}{\oin_{\xc}}
\DeclareMathOperator{\ohnx}{\ohn_{\xc}}
\DeclareMathOperator{\oing}{\oin_{\gc}}
\DeclareMathOperator{\ohng}{\ohn_{\gc}}
\DeclareMathOperator{\oinp}{\oin_{\pc}}
\DeclareMathOperator{\ohnp}{\ohn_{\pc}}
\DeclareMathOperator{\oinps}{\oin_{\psc}}
\DeclareMathOperator{\ohnps}{\ohn_{\psc}}
\DeclareMathOperator{\phn}{hn^+}
\DeclareMathOperator{\pgn}{gn^+}
\DeclareMathOperator{\mhn}{hn^-}
\DeclareMathOperator{\mgn}{gn^-}
\DeclareMathOperator{\intfunc}{I}
\DeclareMathOperator{\ifg}{\intfunc_{\gc}}
\DeclareMathOperator{\ifp}{\intfunc_{\pc}}
\DeclareMathOperator{\ifps}{\intfunc_{\psc}}
\DeclareMathOperator{\ifxc}{\intfunc_{\xc}}
\DeclareMathOperator{\oi}{\overrightarrow{\intfunc}}
\DeclareMathOperator{\oifg}{\oi_{\gc}}
\DeclareMathOperator{\oifp}{\oi_{\pc}}
\DeclareMathOperator{\oifps}{\oi_{\psc}}
\DeclareMathOperator{\ext}{Ext}
\DeclareMathOperator{\hull}{H}
\DeclareMathOperator{\hullg}{\ensuremath{\hull_{\gc}}}
\DeclareMathOperator{\hullp}{\ensuremath{\hull_{\pc}}}
\DeclareMathOperator{\hullps}{\ensuremath{\hull_{\psc}}}
\DeclareMathOperator{\hullx}{\ensuremath{\hull_{\xc}}}
\DeclareMathOperator{\ohull}{\overrightarrow{\hull}}
\DeclareMathOperator{\ohullx}{\ohull_{\xc}}
\DeclareMathOperator{\ohullg}{\ohull_{\gc}}
\DeclareMathOperator{\ohullp}{\ohull_{\pc}}
\DeclareMathOperator{\ohullps}{\ohull_{\psc}}
\newtheorem{thm}{Theorem}
\newtheorem{myclaim}[thm]{Claim}
\newtheorem{prop}[thm]{Proposition}
\newtheorem{lem}[thm]{Lemma}
\newtheorem{cor}[thm]{Corollary}
\newtheorem{prob}[thm]{Problem}
\renewenvironment{proof}{\paragraph{\textbf{Proof:}}}{\hfill$\qed$}
\newenvironment{claimproof}[1]{\par\noindent\underline{Proof:}\space#1}{\hfill $\blacksquare$}
\newenvironment{sketch}{\paragraph{\textcolor{red}{Sketch:}}}{\hfill$\square$}
\newcommand{\pname}[1]{\textsc{#1}}
\newtcolorbox{ProblemBox}[2][]{
	colframe=black!20!white,
	coltitle=black,
	arc=1.5mm,
	boxrule=.15mm,
	colbacktitle=white,
	colback=white,
	enhanced,
	adjusted title=flush left,
	attach boxed title to top left={yshift=-3mm,xshift=3mm},
	boxed title style={colframe=white,righttitle=-3mm,lefttitle=-3mm},
	title=#2,#1
}
\newcommand{\problemDef}[4]{
	\begin{ProblemBox}[label={#4},nameref={#1}]{\pname{#1}}
		\begin{tabularx}{\textwidth}{l l} 
			\textnormal{Input:} & \quad \textnormal{#2} \\ 
			\textnormal{Question:} & \quad \textnormal{#3}
		\end{tabularx}
	\end{ProblemBox}
}
\newcommand{\paraProblemDef}[5]{
	\begin{ProblemBox}[label={#5},nameref={#1}]{\pname{#1}}
		\begin{tabularx}{\textwidth}{l l} 
			\textnormal{Input:} & \quad \textnormal{#2} \\ 
			\textnormal{Parameter:} & \quad \textnormal{#3}. \\ 
			\textnormal{Question:} & \quad \textnormal{#4}
		\end{tabularx}
	\end{ProblemBox}
}
\begin{document}
\title{On the hull and interval numbers of oriented graphs\thanks{Partly supported by FUNCAP-Pronem 4543945/2016, Funcap 186-155.01.00/21, CNPq 313153/2021-3.}}
%
%
\author{J.~Araújo\inst{1}
\and
A.~K.~Maia\inst{2}
\and
P.~P.~Medeiros\inst{1}
\and
L.~Penso\inst{3}
} 
\authorrunning{Araujo et al.}
%
\institute{Departamento de Matemática, 
Universidade Federal do Ceará, Brazil\\
\email{julio@mat.ufc.br, pedropmed1@gmail.com}         
\and
Departamento de Computação, 
Universidade Federal do Ceará, Brazil\\
\email{karolmaia@ufc.br}\\
\and
Universit\"at Ulm, Ulm, Germany\\
\email{lucia.penso@uni-ulm.de}}
\maketitle              
\begin{abstract}
            In this work, for a given oriented graph $D$, we study its interval and hull numbers, denoted by $\oin(D)$ and $\ohn(D)$, respectively, in the oriented geodetic, $\opc$ and $\opsc$ convexities. This last one, we believe to be formally defined and first studied in this paper, although its undirected version is well-known in the literature.
            
            Concerning bounds, for a strongly oriented graph $D$, and the oriented geodetic convexity, we prove that $\ohng(D)\leq m(D)-n(D)+2$ and that there is at least one such that $\ohng(D) = m(D)-n(D)$. We also determine exact values for the hull numbers in these three convexities for tournaments, which imply polynomial-time algorithms to compute them. These results allow us to deduce polynomial-time algorithms to compute $\ohnp(D)$ when the underlying graph of $D$ is split or cobipartite.
            
            Moreover, we provide a meta-theorem by proving that if deciding whether $\oing(D)\leq k$ or $\ohng(D)\leq k$ is \NP-hard or \W[i]-hard parameterized by $k$, for some $i\in\mathbb{Z_+^*}$, then the same holds even if the underlying graph of $D$ is bipartite. Next, we prove that deciding whether $\ohnp(D)\leq k$ or $\ohnps(D)\leq k$ is $\W[2]$-hard parameterized by $k$, even if $D$ is acyclic and its underlying graph is bipartite; that deciding whether $\ohng(D)\leq k$ is $\W[2]$-hard parameterized by $k$, even if $D$ is acyclic; that deciding whether $\oinp(D)\leq k$ or $\oinps(D)\leq k$ is $\NP$-complete, even if $D$ has no directed cycles and the underlying graph of $D$ is a chordal bipartite graph; and that deciding whether $\oinp(D)\leq k$ or $\oinps(D)\leq k$ is $\W[2]$-hard parameterized by $k$, even if the underlying graph of $D$ is split.
            
           Finally, also argue that the interval and hull numbers in the $\opc$ and $\opsc$ convexities can be computed in cubic time for graphs of bounded clique-width by using Courcelle's theorem.

\keywords{Graph Convexity  \and Oriented Graphs \and Hull Number \and Interval Number.}
\end{abstract}
\section{Introduction}
A \emph{convexity space} is an ordered pair $(V,\mathcal{C})$, where $V$ is an arbitrary set and $\mathcal{C}$ is a family of subsets of $V$, called \emph{convex sets}, that satisfies:
\begin{itemize}
  \item {\bf (C1)} $\emptyset,V\in \mathcal{C}$;
  \item {\bf (C2)} For all $\mathcal{C'}\subseteq \mathcal{C}$, we have $\bigcap \mathcal{C'}\in \mathcal{C}$;
  \item {\bf (C3)} Every nested union of convex sets is a convex set.
\end{itemize}


Convexity spaces are a classic topic, studied in different branches of mathematics.
The research of convexities applied to graphs is a more recent topic, from about 50 years ago. Classical convexity definitions and results motivated the definitions of some graph parameters (Carathéodory's number~\cite{barbosa-2012}, Helly's number~\cite{DOURADO2017134}, Radon's number~\cite{Delire1984}, hull number~\cite{THNOAG}, rank~\cite{Kante+2017}, etc.).
The complexity aspects related to the determination of the values of those parameters has been the central issue of several recent works in this field.
For basic notions on Graph Theory and Computational Complexity, the reader is referred to~\cite{BM2008,GJ90,BG2008}.

For graphs, convexity spaces are defined over the vertex set of a given (oriented) graph $G$. Here, we consider only finite, simple and non-null graphs. Thus, we always consider the set of vertices $V(G)$ to be finite and non-empty and the set of edges to be finite. This makes Condition (C3) irrelevant, since any such union will have a larger set.  

To define a convexity for a graph $G$, it remains to define the family $\mathcal{C}$. All graph convexities studied here are \emph{interval convexities}, i.e. defined by an \emph{interval function}. Given a graph $G = (V,E)$, an interval function $I: V\times V\to 2^V$ is a function such that $\{u,v\}\subseteq I(u,v)$ and $I(u,v)=I(v,u)$, for every $u,v\in V(G)$. If $S\subseteq V\times V$, with a slight abuse of notation, we define $I(S) = \bigcup_{(u,v)\in S} I(u,v)$.  We say that $C\subseteq V$ is \emph{convex} (or \emph{$I$-closed}) if $I(C\times C) = C$.
Let the family $\mathcal{C}$ be composed by all $I$-closed subsets of $V$. Thus, $(V,\mathcal{C})$ is a convexity space~\cite{CSIG,Calder1971}. Note that the unitary subsets of $V$, as well as the set $V$ itself, are convex. Such sets will be called \emph{trivial} convex sets.

Given an undirected graph $G = (V,E)$, when $I(u,v)$ returns the set of all vertices belonging to some shortest $u,v$-path (a.k.a. $u,v$-geodesic), we have the \emph{geodetic convexity}~\cite{FJ.86}. If $I(u,v)$ returns the set of all vertices that lie in some path on three vertices having $u$ and $v$ as endpoints, then we have the $P_3$-convexity~\cite{PWW.08}. In case $I(u,v)$ returns the set vertices that lie in some shortest $(u,v)$-path of length two, then we have the $P_3^*$-convexity~\cite{ASSS.18}. The interval functions in the geodetic, $P_3$ and $P_3^*$ convexities are denoted here by $\ifg(G)$, $\ifp(G)$ and $\ifps(G)$, respectively. Some other graph convexities have been studied in the literature~\cite{D.88}.

If $(V,\mathcal{C})$ is a convexity space, the \emph{convex hull} of $S\subseteq V$ with respect to $(V,\mathcal{C})$ is the unique inclusion-wise minimum $C\in \mathcal{C}$ containing $S$ and we denoted it by $\hull_{\mathcal{C}}(S)$. 
In the case of an interval convexity $(V,\mathcal{C})$, $\hull_{\mathcal{C}}(S)$ can be obtained by the interval function $I$ that defines $\mathcal{C}$, for any $S\subseteq V$, as follows: 
$$I^k(S) = \begin{cases}S, \text{ if }k=0;\\I(I^{k-1}(S)),\text{ otherwise.}\end{cases}$$

Since we deal with interval convexities defined over a finite graph $G$, there is an integer $0\leq k\leq n(G)$ such that $\hull_{\mathcal{C}}(S) = \bigcup_{i=0}^k I^i(S)$ (see~\cite{CSIG,Calder1971} for details and further references). 
The convex hulls of a subset $S\subseteq V$ in the geodetic, $P_3$ and $P_3^*$ convexities are denoted by $\hullg(S)$, $\hullp(S)$ and $\hullps(S)$.  

For each graph convexity, as we have mentioned before, several parameters have been defined in the literature. We focus in two of them. Given a graph convexity defined by an interval function $I$ over the vertex set of a graph $G=(V,E)$, an \emph{interval set} (resp. \emph{hull set}) $S\subseteq V$ satisfies that $I(S) = V$ (resp. $\hull(S) = V$). The \emph{interval number} (resp. \emph{hull number}) is the cardinality of a smallest interval set (resp. hull set) $S\subseteq V$ of $G$. The interval numbers in the geodetic, $P_3$ and $P_3^*$ convexities are denoted by $\ing(G)$, $\inp(G)$ and $\inps(G)$, respectively. Analogously, the hull number in these convexities are denoted by $\hng(G)$, $\hnp(G)$ and $\hnps(G)$. We emphasize that an interval set in the geodetic convexity is also known as \emph{geodetic set} and the interval number in this convexity is also called \emph{geodetic number} and it is often denoted by $\gn(G) = \ing(G)$.

\subsection{Convexity on oriented graphs}

Although graph convexities and their different parameters have been extensively studied
in recent years, this topic has not yet been equally investigated considering
directed or oriented graphs.

An oriented graph $D$ is an orientation of a simple graph $G$, i.e., $D$ is obtained from $G$ by choosing an orientation ($u$ to $v$ or $v$ to $u$) for each edge $uv$ of $G$.

For a given oriented graph $D$, the following convexities have been studied in the literature. 

    The \emph{geodetic convexity}, that we also refer as $\ogc$-convexity, over an oriented graph $D$ is defined by the interval function $\oifg$ that returns, for a pair $(u,v)\in V\times V$, the set of vertices that lie in any {\bf shortest directed} $(u,v)$-path or in any {\bf shortest directed} $(v,u)$-path.

    The \emph{two-path convexity}, which we will more often call \emph{$\opc$-convexity}, over an oriented graph $D$ is defined by the interval function $\oifp$ that returns, for a pair $(u,v)\in V\times V$ the set of vertices that lie in any {\bf directed} $(u,v)$-path of length two or in any {\bf directed} $(v,u)$-path of length two.
    
    The \emph{distance-two convexity}, to which we refer as \emph{$\opsc$-convexity}, over an oriented graph $D$ is defined by the interval function $\oifps$ that returns, for a pair $(u,v)\in V\times V$ the set of vertices that lie in any {\bf shortest directed} $(u,v)$-path of length two or in any {\bf shortest directed} $(v,u)$-path of length two. Note that thus there is no arc $(u,v)$ in the first case, nor $(v,u)$ in the second.

    Now, let consider $\xc\in\{\gc,\pc,\psc\}$. A convex set $S$ in $\oxc$-convexity will be denoted as \emph{$\oxc$-convex}. 
    The convex hull of a set $S\subseteq V(D)$ in the $\oxc$-convexity is denoted by $\ohullx(S)$. An interval (resp. hull) set in the $\oxc$-convexity will be referred as an \emph{interval} (resp. \emph{hull}) \emph{$\oxc$-set}. The interval and hull numbers in the $\oxc$-convexity of an oriented graph $D$ are denoted here by $\overrightarrow{in}_{\xc}(D)$ and $\overrightarrow{hn}_{\xc}(D)$, respectively.

    It is important to emphasize that as $D$ is an orientation of a simple graph, then it cannot have both arcs $(u,v)$ and $(v,u)$, for distinct $u,v\in V(D)$.
    Thus, the parameters in the oriented versions are not equivalent to the undirected ones.


%



    Although the first papers related to convexity in graphs study directed graphs \cite{CIT,EITIST,SROST}, most of the papers we can find in the literature about graph convexities deal with undirected graphs. 
    For instance, the hull and geodetic numbers with respect to undirected graphs~\cite{THNOAG,TGNOAG} were first studied in the literature around a decade before their corresponding directed versions~\cite{THNOAOG,TGNOAOG}.
    
    With respect to the directed case, most results in the literature provide bounds on the maximum and minimum values of $\ohn(D(G))$ and $\ogcn(D(G))$ among all possible orientations $D(G)$ of a given undirected simple graph $G$~\cite{THNOAOG,TGNOAOG,OCGAHNIG}.
    It is important to emphasize the results on the parameter $\phn(G)$, the upper orientable hull number of a simple graph \( G \), since these are the only ones related to the upper bound we present.
    Such parameter is defined in~\cite{THNOAOG}, as the maximum value of $\ohn(D)$ among all possible orientations $D$ of a simple graph $G$, and it is easy to see that \( \phn(G) = n(G) \) if and only if there is an orientation \( D \) of \( G \) such that every vertex $V(D)\setminus \{v\}$ is convex i.e. $v\in V(D)$ is extreme.
    They also compare this parameter with others, such as the lower orientable hull number (\( \mhn(G) \)) and the lower and upper orientable geodetic numbers (\( \mgn(G) \) and \( \pgn(G) \) respectively), which are defined analogously.
    
    There are also few results about some related parameters: the forcing hull and geodetic numbers~\cite{TFHAFGNOG,TFGNOAG}, the pre-hull number~\cite{OTGPHNOAG} and the Steiner number~\cite{OTSGAHNOG} are a few examples.
    
    In~\cite{PWW.08}, the authors provide upper bounds for the parameters rank, Helly number, Radon number, Carathéodory number, and hull number when restricted to multipartite tournaments.
    
    In~\cite{AA2021}, the authors focus in the hull and interval numbers for the geodetic convexity in oriented graphs. They first present a general upper bound to $\ohng(D)$, for any oriented graph $D$, and present a tight upper bound for a tournament $T$ satisfying $\ohng(T) = \frac{2}{3}n(T)$. Then, they use the result in the undirected case proved in~\cite{CIPCTHN} to show that computing the geodetic hull number of an oriented graph $D$ is \NP-hard even if the underlying graph of $D$ is a partial cube. Although they do not emphasize, their reduction builds a strongly oriented graph $D$. They also show that deciding whether $\oing(D)\leq k$ for an oriented graph $D$ whose underlying graph is bipartite or cobipartite or split is a \W[2]-hard problem when parameterized by $k$. 
    
    Once more, although the authors do not emphasize, their reduction to the class of bipartite graphs also works for the $\opc$ and $\opsc$ convexities as it uses only paths on three vertices, much are always shortest due to the bipartition, and so does their reduction to cobipartite graphs for the $\opsc$ convexity. Their reduction to split graphs uses paths on four vertices, and their reduction to cobipartite graphs does not hold for the $\opc$ as it is has many paths on three vertices that are not shortest and could be used in the $\opc$.
    
    Consequently, by the results in~\cite{AA2021}, determining whether $\oinps(D)\leq k$ is $\W[2]$-hard parameterized by $k$ even if the underlying graph $G$ of $D$ is bipartite or cobipartite, and determining whether $\oinp(D)\leq k$ is $\W[2]$-hard parameterized by $k$ even if the underlying graph $G$ of $D$ is bipartite.
    They also present polynomial-time algorithms to compute both parameters for any oriented cactus graph and they present a 
    tight upper bound for the geodetic hull number of an oriented split graph.
    
    \subsection{Our contributions and organization of the paper}
    
    We first present some basic results concerning the elements of hull and interval sets at the geodetic, $\opc$ and $\opsc$ convexities, in Section~\ref{sec:preliminaries}. 
    
    Then dedicate Section~\ref{sec:geodetic} to the results concerning bounds and equalities for the oriented hull number. In Section~\ref{subsec:strongly}, for a strongly oriented graph $D$, we prove that $\ohng(D)\leq m(D)-n(D)+2$ and that there is a strongly oriented graph such that $\ohng(D) = m(D)-n(D)$. In Section~\ref{subsec:tournaments}, we determine exact values for the hull numbers in the geodetic and $\opsc$ convexities for tournaments. These formulas imply polynomial-time algorithms for computing these parameters for this particular case of tournaments. The case of $\opc$ was already studied~\cite{HW1996}. We present a shorter proof that $\ohnp(T)\leq 2$ for a given tournament $T$, and we use this result to deduce that $\ohnp(D)$ can be computed in polynomial time, whenever the underlying graph of $D$ is cobipartite or split.
    
    Section~\ref{sec:hardness} is devoted to hardness results from the computational complexity point of view.
    Section~\ref{subset:meta-theorem}, we provide a meta-theorem by proving that if deciding whether $\oing(D)\leq k$ or $\ohng(D)\leq k$ is \NP-hard or \W[i]-hard parameterized by $k$, for some $i\in\mathbb{Z_+^*}$, then the same holds even if the underlying graph of $D$ is bipartite.
    In the Sections~\ref{subsec:hardness-hull} and~\ref{subsec:hardness-interval}, we seek to complement the study of~\cite{AA2021} from the computational complexity point of view when restricted to bipartite, split, or cobipartite graphs. We prove that deciding whether $\ohnp(D)\leq k$ is $\W[2]$-hard parameterized by $k$ even if $D$ is acyclic and the underlying graph of $D$ is bipartite. With a slight modification in the reduction, we deduce that deciding whether $\ohng(D)\leq k$ is $\W[2]$-hard parameterized by $k$ even if $D$ is acyclic. Then, we prove that deciding whether $\oinp(D)\leq k$ is $\NP$-complete, even if $D$ has no directed cycles and the underlying graph of $D$ is a chordal bipartite graph. Since both results about the hardness of computing $\ohnp(D)$ hold for oriented graphs $D$ whose underlying graph is also bipartite, then both results also hold for $\ohnps(D)$ and $\oinps(D)$, respectively. Finally, we also prove that deciding whether $\oinp(D)\leq k$ or $\oinps(D)\leq k$ is $\W$[2]-hard when parameterized by $k$, even if the underlying graph of $D$ is split. 
            
    Since such parameters are hard to compute for bipartite graphs, a natural question is to ask whether they can be computed in polynomial time for trees. We argue in Section~\ref{subsec:coucelle} that interval and hull numbers in the $\opc$ and $\opsc$ convexities can be computed in polynomial time for graphs of bounded clique-width by using Courcelle's theorem. 
    
    \begin{table}[h!]
        \centering
        \begin{tabular}{c|c|c}
            Graph Class & Parameters & Complexity \\ \hline
            Tournaments & $\ohng$,$\ohnps$, 
 ($\ohnp$\cite{HW1996})
            & $\P$

            \\ \hline
            Underl. bipartite & ($\ohng \leq k$\cite{AA2021}) & $\NP$-complete \\
            & ($\oing, \oinp, \oinps \leq k$\cite{AA2021}) & $\W[2]$-hard by $k$ \\ 
            & $\ohnp,\ohnps \leq k$ & $\W[2]$-hard by $k$ \\ \hline
            Underl. chordal bipartite & $\oinp,\oinps\leq k$ & $\NP$-complete \\ \hline
            Underl. split & ($\oing\leq k$\cite{AA2021}) & $\W[2]$-hard by $k$  \\ 
            & $\oinp, \oinps\leq k$ & $\W[2]$-hard by $k$ \\
            & $\ohnp$ & \P \\
            \hline
            Underl. cobipartite & ($\oing, \oinps\leq k$~\cite{AA2021})& $\W[2]$-hard by $k$\\
            & $\ohnp$ & \P \\ \hline
            {\small General graphs} & {\tiny $\oinp,\oinps,\ohnp,\ohnps$}& {\footnotesize $FPT$ by clique width.}
        \end{tabular}
        \caption{Summary of complexity results according to main studied graph classes. In parentheses we have results from other authors.}
        \label{tab:complexity}
    \end{table}
    
    Finally, Section~\ref{sec:conclusion} presents avenues for further research. A summary of results is shown in Table \ref{tab:complexity}.

    
    \begin{omit}
    In Section~\ref{sec:further}, we present some directions for further research.
    \end{omit}

\section{Preliminaries}
\label{sec:preliminaries}
    
    Let $D$ be oriented graph. A vertex $v\in V(D)$ is \emph{transitive} if for any $u,w\in V(D)$ such that there are $(u,v),(v,w)\in A(D)$, then $(u,w)\in A(D)$ as well. A vertex $v\in V(D)$ is \emph{extreme} if $v$ is a source, or $v$ is a sink, or $v$ is transitive. For an oriented graph $D$, we denote by $\ext(D)$ the set of extreme vertices of $D$. 
    
    Consider $\xc\in\{\gc,\pc,\psc\}$. For a convex set $S$ in $\oxc$-convexity, $V(D)\setminus S$ is said to be \emph{$\oxc$-coconvex}. We make the following remark.

    \begin{prop}
    \label{prop:coconvex_sets}
    Let $D$ be an oriented graph and let $\xc \in \{\ogc,\opc, \opsc\}$. If $S\neq \emptyset$ is $\xc$-coconvex, then any interval $\xc$-set and any hull $\xc$-set contains at least one vertex of $S$. In particular, if $v\in D$ is a source or a sink, then $v$ necessarily belongs to any interval $\xc$-set and any hull $\xc$-set of $D$. Moreover, for $\xc\in\{\ogc,\opsc\}$, if $v\in D$ is transitive, then $v$ belongs to any interval $\xc$-set and any hull $\xc$-set of $D$.\end{prop}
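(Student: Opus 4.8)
The plan is to derive both ``in particular'' statements from the first assertion, so the core of the argument is the claim about coconvex sets, which in turn rests only on the monotonicity of the oriented interval functions. Throughout I would write $C=V(D)\setminus S$, so that $S$ being $\xc$-coconvex means precisely that $C$ is $\xc$-convex, i.e. $\oifx(C\times C)=C$.

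For the first assertion I would argue by contradiction. Suppose some interval (resp. hull) $\xc$-set $W$ misses $S$, i.e. $W\subseteq C$. Since $\oifx$ is defined as a union over pairs, it is monotone: from $W\times W\subseteq C\times C$ we get $\oifx(W\times W)\subseteq\oifx(C\times C)=C$, and iterating yields $\ohullx(W)\subseteq C$. But $W$ being an interval set means $\oifx(W\times W)=V(D)$ and $W$ being a hull set means $\ohullx(W)=V(D)$; either way $V(D)\subseteq C\subsetneq V(D)$, the strict inclusion holding because $S\neq\emptyset$, a contradiction. Hence every interval and every hull $\xc$-set meets $S$.

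To handle a source or sink $v$, the goal is to show that $S=\{v\}$ is $\xc$-coconvex in all three convexities, after which the first assertion applies. The key observation is that an internal vertex of any directed path has both an in-arc and an out-arc on that path; a source (no in-arc) or a sink (no out-arc) can therefore never be internal to a directed path. Consequently, for all $u,w\neq v$ we have $v\notin\oifx(u,w)$, which gives $\oifx((V(D)\setminus\{v\})\times(V(D)\setminus\{v\}))\subseteq V(D)\setminus\{v\}$; together with $\{u,w\}\subseteq\oifx(u,w)$ this shows $V(D)\setminus\{v\}$ is convex, as required.

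Finally, for a transitive vertex $v$ and $\xc\in\{\ogc,\opsc\}$ I would again show $\{v\}$ is coconvex. If $v$ lay on a shortest directed $(u,w)$-path with $u,w\neq v$, let $a\to v\to b$ be its consecutive arcs on that path; transitivity yields the arc $(a,b)$, and replacing $a\to v\to b$ by $a\to b$ produces a strictly shorter directed $(u,w)$-path, contradicting shortestness. For $\opsc$ the same transitivity gives a direct arc between the endpoints of any putative length-two path through $v$, so such a path is never shortest. Thus $v\notin\oifx(u,w)$ for $u,w\neq v$ and $\{v\}$ is coconvex. I expect the transitive case to be the only delicate point: one must check that the shortcutting arc supplied by transitivity genuinely shortens the path, and note that this argument breaks for $\opc$, where non-shortest length-two paths still contribute to the interval---which explains why $\opc$ is excluded from the last statement.
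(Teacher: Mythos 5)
Your proposal is correct and follows essentially the same route as the paper: the paper's (much terser) proof likewise treats the coconvex case as immediate from the definition (your monotonicity argument is exactly what it implicitly invokes), observes that a source or sink can never be internal to any directed path, and that transitivity forbids a vertex from being internal to a \emph{shortest} directed path. Your explicit shortcutting argument via the arc $(a,b)$, and your remark on why $\opc$ must be excluded, merely spell out details the paper leaves to the reader.
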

    
    \begin{proof}
        If $S$ is $\xc$-coconvex, by definition one of its vertices should belong to any interval or hull  $\xc$-set of $D$.
        If $\{v\}$ is a source or a sink, no directed $(u,w)$-path, for some $u,w\in V(D)\setminus\{v\}$, has $v$ as internal vertex. Analogously, if $\{v\}$ is transitive, no shortest directed $(u,w)$-path has $v$ as internal vertex. 
    \end{proof}\\

    
    
    For an oriented graph $D$ and a vertex $v\in V(D)$, define the \emph{in-section} (resp. \emph{out-section}) of $v$ in $D$ as the set of vertices $u$ having a directed $(u,v)$-path (resp. $(v,u)$-path) in $D$, and denote it by $S_D^-(v)$ (resp. $S_D^+(v)$). In particular, if $u\in S_D^+(v)$, we say that $u$ is \emph{reachable} from $v$.

    We say that $D$ is a \emph{strongly oriented graph} if $D$ is strongly connected, that is, for every pair $u,v$ of distinct vertices of $D$, there is a path from $u$ to $v$ and a path from $v$ to $u$ in $D$. The maximal strongly connected subgraphs of a digraph $D$ are its \emph{strong components}. Every oriented graph can be decomposed in its strong components, and there is an acyclic ordering of them. 
    A minor remark is that:

    \begin{prop}
     \label{prop:inoutsections}
     If $D$ is a connected, but not strongly connected, oriented graph and $u\in V(D)$ does not lie in a sink (resp. source) strong component $C$ of $D$, then there exists a sink (resp. source) strong component $C'\subseteq D$ such that $C'\subseteq S_D^+(v)$ (resp. $C'\subseteq S_D^-(v)$) and $C'\neq C$.
     \end{prop}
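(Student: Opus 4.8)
The plan is to work with the condensation of $D$, that is, the acyclic digraph obtained by contracting each strong component to a single vertex; the excerpt already recalls that the strong components of an oriented graph admit an acyclic ordering, so this quotient is a DAG. I would prove the sink version in detail, and obtain the source version by reversing all arcs: under this reversal, out-sections $S_D^+(\cdot)$ turn into in-sections $S_D^-(\cdot)$ and sink strong components turn into source strong components, so the two statements are equivalent.

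First I would let $C$ denote the strong component containing the given vertex $v$, and read the hypothesis as saying that $C$ is not a sink component, i.e. the vertex of the condensation representing $C$ has out-degree at least one. Since the condensation is a finite DAG, starting at $C$ and repeatedly following an outgoing arc must terminate, and it can only terminate at a vertex of out-degree zero; this produces a sink component $C'$ together with a directed path from $C$ to $C'$ in the condensation. Because $C$ is not a sink while $C'$ is, we automatically get $C' \neq C$, which yields the required distinctness.

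It then remains to verify that $C' \subseteq S_D^+(v)$, i.e. that every vertex of $C'$ is reachable from $v$ in $D$. The directed path in the condensation lifts to a directed path in $D$ from some vertex $x \in C$ to some vertex $y \in C'$. Since $C$ is strongly connected, $v$ reaches $x$, and concatenating gives a directed $(v,y)$-path; and since $C'$ is strongly connected, $y$ reaches every vertex of $C'$. Composing these, $v$ reaches every vertex of $C'$, so $C' \subseteq S_D^+(v)$, as desired.

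The only delicate point is this last lifting argument: a path in the condensation only certifies reachability between single representatives of the components, so establishing that the \emph{entire} component $C'$ lies in the out-section of $v$ crucially uses the strong connectivity of $C'$ (and the strong connectivity of $C$ to travel from $v$ to $x$ in the first place). Everything else is a routine traversal of the acyclic condensation, and the assumption that $D$ is connected but not strongly connected merely guarantees that this condensation has more than one vertex so that proper sink components exist.
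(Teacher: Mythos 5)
Your proof is correct, and it takes a genuinely different route from the paper's. The paper argues directly in $D$: it takes a maximal directed path $P$ starting at $u$ (resp.\ ending at $u$) and asserts that the other endpoint of $P$ is a sink or lies in a sink (resp.\ source) strong component, leaving the reachability of the \emph{whole} component implicit. You instead pass to the condensation, walk down the DAG from the component of $v$ to a sink of the DAG, and then lift, carefully using strong connectivity of $C$ (to get from $v$ to the exit vertex) and of $C'$ (to absorb the entire component into $S_D^+(v)$). Your version is more robust: read literally, the paper's one-liner has a small gap, since a non-extendable directed path in $D$ need not end in a sink strong component --- e.g.\ with arcs $u\to a$, $a\to b$, $b\to c$, $c\to a$, $b\to d$, the path $(u,a,b,c)$ is non-extendable but $c$'s component $\{a,b,c\}$ has the outgoing arc $b\to d$. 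Repairing this requires, in effect, iterating the argument component by component, which is exactly your descent in the condensation. What the paper's approach buys is brevity and no auxiliary construction; what yours buys is an airtight argument, an explicit justification of $C'\subseteq S_D^+(v)$ and $C'\neq C$ (the latter immediate since $C$ is not a sink component while $C'$ is), and a clean reduction of the source case by arc reversal, where the paper symmetrically takes a path ending at $u$.
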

     \begin{proof}
         If suffices to take a maximal directed path $P$ in $D$ starting at $u$ (resp. ending at $u$). The other endpoint of $P$ is either a sink (resp. source), or lies in a sink (resp. source) strong component $D'$ as required.
     \end{proof}

    For $\xc\in\{\gc,\psc\}$, 
    an $\oxc$-coconvex set $S\subseteq V(D)$ in an oriented graph $D$ satisfies the property that no shortest directed $(u,v)$-path (of length two, in case $\oxc=\opsc$) contains a vertex of $S$, for some $u,v\notin S$. This occurs in three situations: 
    \begin{itemize}
        \item there is no arc $(u,w)\in A(D)$ such that $u\in V(D)\setminus S$ and $w\in S$; in this case we say that $S$ is a \emph{source $\oxc$-coconvex set};
        \item there is no arc $(w,u)\in A(D)$ such that $u\in V(D)\setminus S$ and $w\in S$; in this case we say that $S$ is a \emph{sink $\oxc$-coconvex set};
        \item there are arcs $(u,w),(z,v)\in A(D)$ such that $u,v\in V(D)\setminus S$, $u\neq v$, $w,z\in S$ ($w$ might be equal to $z$), but no shortest directed $(u,v)$-path (of length two, in case $\oxc = \opsc$) contains $w$; in this case we say that $S$ is a \emph{transitive $\oxc$-coconvex set}.
    \end{itemize}

    If $D'\subseteq D$ is a source, or a sink, or a transitive strong component of $D$, then we say that $D'$ is an \emph{extreme} strong component of $D$. Consequently, if $S\subseteq V(D)$ is such that $D[S]$ a source (resp. sink, transitive) strong component, then $S$ is a source (resp. sink, transitive) $\oxc$-coconvex set of $D$.

    Thus, a direct consequence of Proposition~\ref{prop:coconvex_sets} is:
    \begin{cor}
    \label{cor:extremecomponentsarecoconvex}
            If $D$ is a oriented graph and  $D'\subseteq D$ is a source or a sink strong component of $D$, then any interval $\xc$-set and any hull $\xc$-set contains at least one vertex of $D'$, for $\xc \in \{\ogc,\opc, \opsc\}$.
        If $S$ is a transitive strong component of an oriented graph $D$, then any interval $\xc$-set and any hull $\xc$-set contains at least one vertex of $S$, for $\xc\in\{\ogc,\opsc\}$.
    \end{cor}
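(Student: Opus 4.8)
The plan is to derive the statement as an immediate specialization of Proposition~\ref{prop:coconvex_sets}, using the structural remark established just above the corollary: a source (resp.\ sink, transitive) strong component $D'$ gives rise to a source (resp.\ sink, transitive) $\oxc$-coconvex set $S = V(D')$. Since the proposition already guarantees that every nonempty $\xc$-coconvex set meets every interval and hull $\xc$-set, the only work left is to check the hypotheses and to keep careful track of which convexities each case is valid for.

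First I would handle the source/sink case. Let $D'$ be a source or sink strong component of $D$ and put $S = V(D')$. As a strong component is a maximal strongly connected subgraph, $S \neq \emptyset$. By the preceding remark, $S$ is a source (resp.\ sink) $\oxc$-coconvex set, and this is valid for every $\xc \in \{\ogc, \opc, \opsc\}$: the defining condition — no arc enters $S$ from outside (resp.\ leaves $S$ to the outside) — forbids any directed length-two path with internal vertex in $S$ and both endpoints in $V(D)\setminus S$, irrespective of whether one insists on the path being shortest. Applying Proposition~\ref{prop:coconvex_sets} to the nonempty coconvex set $S$ then yields that every interval $\xc$-set and every hull $\xc$-set contains a vertex of $S$, for all three convexities simultaneously.

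Next I would treat the transitive case. If $S = V(D')$ for a transitive strong component $D'$, then again $S \neq \emptyset$ and, by the same remark, $S$ is a transitive $\oxc$-coconvex set, but now only for $\xc \in \{\ogc, \opsc\}$; this restriction is exactly the one appearing in Proposition~\ref{prop:coconvex_sets}, so invoking the proposition once more finishes this case.

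There is essentially no obstacle here, since the result is flagged in the text as a direct consequence of Proposition~\ref{prop:coconvex_sets}; the only point requiring care — and the reason the transitive case is confined to $\{\ogc, \opsc\}$ — is that $\opc$-intervals use \emph{all} directed length-two paths rather than only the shortest ones. A transitive component can still sit as the middle vertex of a path $u \to w \to v$ with $u,v \notin S$ even when the arc $(u,v)$ is present, so such a component need not be $\opc$-coconvex and a vertex of it need not belong to every interval or hull $\opc$-set. Flagging this distinction is the one thing I would be careful to state explicitly when writing out the argument.
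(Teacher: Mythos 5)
Your proposal is correct and matches the paper exactly: the paper states this corollary as a direct consequence of Proposition~\ref{prop:coconvex_sets} together with the preceding remark that extreme strong components yield coconvex sets, which is precisely your argument. Your explicit verification that the source/sink case extends to $\opc$ (and your explanation of why the transitive case does not) is a welcome elaboration of a point the paper leaves implicit.
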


     
        It is well-known (see~\cite{BG2008}) that a digraph $D$ such that $V(D)\geq 2$ is strongly connected if, and only if, $D$ admits an ear (a.k.a. handle) decomposition which is a sequence $(P_0,P_1,\ldots, P_k)$, for some $k\geq 0$, such that: 
\begin{itemize}
    \item $P_0$ is a directed cycle in $D$;
    \item for every $i\in\{1,\ldots, k\}$, $P_i$ is a directed path of length at least one or a directed cycle in $D$, called an \emph{ear/handle}, such that if $P_i = (u,\ldots,v)$ is a directed path, then $V(P_i)\cap (V(P_0)\cup \ldots \cup V(P_{i-1})) = \{u,v\}$, otherwise $|V(P_i)\cap (V(P_0)\cup \ldots \cup V(P_{i-1}))| = 1$;
    \item The family $\{A(P_0),\ldots, A(P_k)\}$ is a partition of $A(D)$.
\end{itemize}
An ear $P_i$ containing a single arc is called \emph{trivial}, regardless if it is a loop or a path of length one, while other ears are \emph{non-trivial}.

To prove our first result, we need a stronger version of an ear decomposition based on shortest ears. The proof of the next lemma can be found in~\cite{BG2008}, Theorem~5.3.2.

\begin{lem}
\label{lem:short_ear_decomposition}
A directed graph $D$ such that $V(D)\geq 2$ is strongly connected if, and only if, it has an ear decomposition $(P_0,P_1,\dots,P_k)$ such that:
\begin{enumerate}
\item $P_0$ is a shortest directed cycle;
\item there exists $j\in \{0,\dots,k\}$ such that $P_1,\dots,P_j$ are non-trivial ears, while $P_{j+1},\dots,P_k$ are trivial;
\item if $P=(u,w,\dots,v)$ is non-trivial ear, then $P_{wv}$ is $(w,v)$-shortest path in $D$;
\end{enumerate}
where $P_{wv}$ denotes the subpath of $P$ from $w$ to $v$.
\end{lem}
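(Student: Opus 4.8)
The plan is to take the ordinary ear decomposition characterization recalled just above as given, so that the backward direction (any ear decomposition forces strong connectivity) is already in hand; what remains is to show that a strongly oriented $D$ with $|V(D)|\geq 2$ admits an ear decomposition additionally satisfying (1)--(3). I would produce it by a greedy construction that maintains a strongly connected subdigraph $D_i = P_0\cup P_1\cup\dots\cup P_i$ and adds one ear at a time. For $P_0$, simply pick a shortest directed cycle of $D$; such a cycle exists because $D$ is strongly connected with $|V(D)|\geq 2$, and this immediately gives (1).

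The construction proceeds in two phases, which is exactly what yields (2). In Phase~1, while $V(D_i)\neq V(D)$, strong connectivity guarantees an arc $(u,w)\in A(D)$ with $u\in V(D_i)$ and $w\notin V(D_i)$ (any directed path from inside to an omitted vertex must cross the boundary). I then take a shortest directed path $Q$ in $D$ from $w$ to the set $V(D_i)$, and let $v\in V(D_i)$ be its terminal vertex. Prefixing $Q$ with the arc $(u,w)$ gives the next ear $P_{i+1}=(u,w,\dots,v)$; since every proper prefix of a shortest path to $V(D_i)$ avoids $V(D_i)$ (otherwise it could be truncated), all internal vertices of $P_{i+1}$ are new, so $P_{i+1}$ is a genuine non-trivial ear of length at least two (an open path if $v\neq u$, a cycle attached at $u$ if $v=u$). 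Once $V(D_i)=V(D)$, Phase~2 simply appends the remaining arcs of $A(D)\setminus A(D_i)$ one by one; each has both endpoints already present and is therefore a trivial ear. Thus all non-trivial ears precede all trivial ones, giving (2), and since the two phases together consume every arc, $\{A(P_0),\dots,A(P_k)\}$ partitions $A(D)$ and $D_k=D$.

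The heart of the argument, and what I expect to be the only real obstacle, is property (3): I must check that the subpath $P_{wv}=Q$ chosen above is not merely shortest \emph{to the current subgraph} but is a shortest $(w,v)$-path in all of $D$. This is where the choice of $Q$ as a shortest path to $V(D_i)$ pays off: suppose some $(w,v)$-path $Q'$ in $D$ were strictly shorter than $Q$. Let $v'$ be the first vertex of $Q'$ lying in $V(D_i)$, which exists since the endpoint $v$ already lies in $V(D_i)$. The prefix of $Q'$ up to $v'$ is then a path from $w$ to $V(D_i)$ of length at most $|Q'|<|Q|$, contradicting the choice of $Q$ as a shortest path from $w$ to $V(D_i)$. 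Hence $Q$ is a shortest $(w,v)$-path in $D$, establishing (3).

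It remains only to confirm termination and that the invariant ``$D_i$ is strongly connected'' is preserved; both are routine. Phase~1 strictly increases $|V(D_i)|$ at each step, so it halts with $V(D_i)=V(D)$, and Phase~2 is finite since $A(D)$ is finite; attaching a path or cycle ear whose endpoints lie in a strongly connected $D_i$ keeps it strongly connected (the new internal vertices reach $v$ along the ear and are reached from $u$), and adding an arc never destroys strong connectivity. This completes the construction and, together with the recalled backward direction, the desired equivalence.
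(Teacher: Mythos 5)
Your construction is correct and complete: the two-phase greedy scheme (a shortest directed cycle for $P_0$, then repeatedly an out-arc $(u,w)$ followed by a shortest path from $w$ back to the current vertex set, then all remaining arcs as trivial ears), together with your truncation argument showing the shortest-path-to-a-set choice yields property (3), is exactly the standard argument. The paper itself gives no proof, deferring to Theorem~5.3.2 of Bang-Jensen and Gutin, whose proof proceeds in essentially this same way, so your write-up faithfully fills in the cited argument.
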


    \section{(In)equalities for the oriented hull number}
    \label{sec:geodetic}

        In the sequel, we first present a general bound for strongly oriented graphs. Then, we determine exact values for hull number of a tournament in the geodetic and $\opsc$ convexities. In particular, these results imply  polynomial-time algorithms to compute $\ohng(T)$ and $\ohnps(T)$ when $T$ is a tournament. Afterwards, we focus on the case of $\opc$-convexity, which was already studied in the literature in the case of tournaments and extend some results to oriented graphs whose underlying graph is split or cobipartite. A \emph{tournament} is a oriented graph obtained from the orientation of each edge of a complete graph. Remember that we say that a graph $G=(V,E)$ is \emph{split} if we can partition $V=C\cup I$, where $C$ is a complete graph and $I$ is an independent set and a \emph{cobipartite graph} is a graph which is the complement of a bipartite graph. 


    \subsection{Geodetic and distance-two convexities}
    \label{subsec:strongly}

     \begin{prop}
         If $D$ is a strongly oriented graph, then $\ohng(D)\leq m(D)-n(D)+2$.
     \end{prop}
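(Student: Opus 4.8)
The plan is to exploit the shortest ear decomposition guaranteed by \Cref{lem:short_ear_decomposition}. Write $(P_0,P_1,\dots,P_k)$ for such a decomposition, where $P_0$ is a shortest directed cycle and $P_1,\dots,P_j$ are the non-trivial ears. A counting argument on the partition $\{A(P_0),\dots,A(P_k)\}$ of $A(D)$ shows that $k = m(D)-n(D)$: the base cycle contributes equally many vertices and arcs; each non-trivial ear with $a$ arcs contributes $a$ arcs and $a-1$ new vertices (net $+1$ to $m-n$), regardless of whether it is open or closed; and each trivial ear contributes one arc and no new vertex. Hence it will suffice to build a hull set of size at most $j+2 \le k+2 = m(D)-n(D)+2$.

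The construction is to take as seed set $S$ any two vertices of $P_0$ together with, for each non-trivial ear $P_i=(u,w_i,\dots,v)$ with $1\le i\le j$, its second vertex $w_i$. Then $|S|\le 2+j \le m(D)-n(D)+2$, so I only need to verify that $\hullg(S)=V(D)$. This I would prove by induction on the ears, establishing $V(P_0)\cup\dots\cup V(P_i)\subseteq \hullg(S)$ for each $i$, using that $\hullg(S)$ is closed under $\oifg$.

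For the base case I need the key fact that on a shortest directed cycle the forward arc-path between any two of its vertices is a shortest path in all of $D$. Indeed, if $a,b\in V(P_0)$ and some $(a,b)$-path $Q$ were strictly shorter than the arc-path $P_0[a\to b]$, then concatenating $Q$ with $P_0[b\to a]$ would give a closed directed walk of length less than $|P_0|$, which contains a directed cycle shorter than $P_0$, contradicting the minimality of $P_0$. Consequently $\oifg(\{a,b\})$ contains the vertices of both $P_0[a\to b]$ and $P_0[b\to a]$, whose union is $V(P_0)$, so the two seeds generate the whole base cycle. For the inductive step at a non-trivial ear $P_i=(u,w_i,\dots,v)$, the endpoint $v$ lies in $V(P_0)\cup\dots\cup V(P_{i-1})\subseteq\hullg(S)$ by hypothesis, while $w_i\in S$. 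By condition~3 of \Cref{lem:short_ear_decomposition}, the subpath $P_{w_i v}$ is a shortest $(w_i,v)$-path in $D$, so $\oifg(\{w_i,v\})\subseteq\hullg(S)$ already contains all internal vertices of $P_{w_i v}$, which are exactly the new vertices introduced by $P_i$; trivial ears introduce none. This closes the induction and yields $\ohng(D)\le m(D)-n(D)+2$.

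The one genuinely non-routine point, and where I expect the main difficulty, is ensuring that the naive paths along the ear structure (the arc-paths on $P_0$ and the designated subpaths $P_{w_iv}$) are shortest \emph{in the whole digraph} $D$, since $\oifg$ measures distances globally rather than within the ear subgraph. For the ears this is delivered directly by condition~3 of the lemma, but for the base cycle it must be extracted from the minimality of $P_0$ via the closed-walk-contains-a-cycle argument above; getting that dependence right is the crux of the proof.
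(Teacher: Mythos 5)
Your proof is correct and follows essentially the same route as the paper: the shortest ear decomposition of \Cref{lem:short_ear_decomposition}, the seed set consisting of two vertices of $P_0$ plus the second vertex $w_i$ of each non-trivial ear, the count $k=m(D)-n(D)$, and induction along the ears using condition~3 of the lemma. The only difference is that you spell out via the closed-walk argument why the two arc-paths of the shortest cycle $P_0$ are shortest in all of $D$, a point the paper simply asserts.
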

    \begin{proof}

Let $(P_0,\dots, P_k)$ be an ear decomposition as in Lemma \ref{lem:short_ear_decomposition}. Let us build a geodetic hull set $S$ of $D$ as follows: add to $S$ two vertices of the shortest cycle $P_0$ and, for each non-trivial ear $P_p$ = $(u_p, w_p,\dots, v_p)$, add $w_p$ to $S$, for each
$p\in\{1,\dots, j\}$. It is well-known that if  $(P_0,\dots, P_k)$ is an ear decomposition of $D$,
then $k = m(D)-n(D)$ as we have exactly one extra arc (compared to the number
of vertices) per ear $P_1,\dots,P_k$~\cite{BG2008}. Thus, it follows that $|S|\leq m(D) - n(D) + 2$,
as we pick exactly two vertices in the initial cycle, plus one vertex per remaining
non-trivial ear.

Let us claim that $S$ is indeed a geodetic hull set of $D$. Since $P_0$ is a shortest
cycle, $V(P_0)\subseteq\oifg(S)$ as we added two vertices say $x$ and $y$ of $P_0$ to $S$ and the
$(x, y)$-path and the $(y, x)$-path that form $P_0$ must be shortest directed paths.
Let $V_p = V(P_0) \cup \dots\cup V(P_p)$, for every $p \in \{0, \ldots , k\}$. Now, by induction
in $p \in \{1, \ldots , j\}$, note that $V_p \subseteq
\oifg^{p+1}
(S)$ because, by hypothesis, we know
that $V_{p-1} \subseteq\oifg^p(S)$, and we have that $P_p = (u_p, w_p,\dots, v_p)$ where $w_p \in S$, $v_p \in\oifg^p(S)$ and the $(w_p, v_p)$-subpath of $P_p$ is a shortest directed path in $D$, by Lemma~\ref{lem:short_ear_decomposition}. Consequently, $V(D) = V_0 \cup\dots\cup V_k =\ohullg(S)$ and the result follows.
\end{proof}
        
        
        

\begin{prop}
    \label{prop:coconvex_geo_paths}
        Let $D$ be an oriented graph and $P = (u,w_1,\ldots,w_q,v)$ be a directed not-shortest $(u,v)$-path in $D$, for some $q\geq 1$, such that $d_D^-(w_i)=d_D^+(w_i)=1$, for every $i\in\{1,\ldots,q\}$. Then, any minimum hull $\ogc$-set of $D$ contains at least one
        internal vertex of $P$.
    \end{prop}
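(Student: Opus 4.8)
The plan is to argue by contradiction, exploiting the degree hypothesis to pin down the exact shape of any shortest directed path that could ever absorb an internal vertex of $P$. Write $w_0 = u$ and $w_{q+1} = v$ for convenience. The key structural observation is that, since $d_D^-(w_i) = d_D^+(w_i) = 1$ for every $i \in \{1,\dots,q\}$, the vertex $w_{i-1}$ is the \emph{unique} in-neighbour of $w_i$ and $w_{i+1}$ is its \emph{unique} out-neighbour. Consequently, any directed path that has $w_i$ as an internal vertex is forced to enter $w_i$ along the arc $(w_{i-1},w_i)$ and to leave it along the arc $(w_i,w_{i+1})$. I will turn this local rigidity into a global one.

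Suppose, for contradiction, that $S$ is a hull $\ogc$-set of $D$ containing no internal vertex of $P$. Since $S$ is a hull set, each $w_i$ lies in $\ohullg(S) = \bigcup_{p\geq 0}\oifg^p(S)$, so I may take the least index $k$ with $\oifg^k(S)$ containing some internal vertex $w_i$ of $P$. As $\oifg^0(S) = S$ contains no such vertex, we have $k \geq 1$, and $w_i \in \oifg^k(S)\setminus\oifg^{k-1}(S)$. Hence $w_i$ lies on a shortest directed path $Q$ between two vertices $a,b \in \oifg^{k-1}(S)$ (possibly after swapping $a$ and $b$), and $w_i$ must be \emph{internal} to $Q$: otherwise $w_i \in \{a,b\}\subseteq \oifg^{k-1}(S)$, contradicting the minimality of $k$.

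The main step, and the one requiring the most care, is to show that $Q$ contains all of $P$ as a contiguous subpath. I would trace $Q$ backward from $w_i$: by the degree condition its predecessor on $Q$ is forced to be $w_{i-1}$. By minimality of $k$, the set $\oifg^{k-1}(S)$ contains no internal vertex of $P$; in particular the endpoint $a$ is not one. Therefore, so long as the current vertex along the backward trace is an internal vertex $w_j$ with $j \geq 1$, it cannot equal $a$ and must again be entered from $w_{j-1}$. Iterating, the trace is forced all the way to $w_0 = u$, so $Q$ contains the segment $u, w_1, \dots, w_i$ consecutively. A symmetric forward trace from $w_i$, using the unique-out-neighbour property, forces the segment $w_i, w_{i+1}, \dots, w_q, v$. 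Together these show that $Q$ contains the whole path $P$ as a contiguous subpath.

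Finally, I would invoke the standard fact that a subpath of a shortest directed path is itself a shortest directed path between its endpoints: if the $(u,v)$-portion of $Q$ could be shortened, splicing the shorter path into $Q$ would contradict the minimality of $Q$. Since $P$ is precisely the $(u,v)$-subpath of the shortest directed path $Q$, it follows that $P$ is a shortest directed $(u,v)$-path, contradicting the hypothesis that $P$ is not shortest. This contradiction shows that every hull $\ogc$-set of $D$—and hence, in particular, every minimum one—must contain at least one internal vertex of $P$. (Note that minimality of $S$ is never used, so the stronger statement for arbitrary hull sets is obtained for free.)
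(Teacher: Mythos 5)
Your proof is correct and takes essentially the same route as the paper: the degree hypothesis forces any shortest directed path through an internal vertex of $P$ to enter at $u$ and exit at $v$, containing all of $P$ and thus making $P$ shortest---exactly the paper's observation that $W=\{w_1,\ldots,w_q\}$ is $\ogc$-coconvex. The only difference is presentational: the paper invokes \cref{prop:coconvex_sets} on coconvex sets, whereas you unfold that step into an explicit minimal-index argument on the hull iteration, and, like the paper, you actually obtain the conclusion for every hull $\ogc$-set, not just minimum ones.
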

    \begin{proof}
        First note that, since $P$ is not a shortest $(u,v)$-path and since $d_D^-(w_i)=d_D^+(w_i)=1$, for every $i\in\{1,\ldots,q\}$, we have that $W=\{w_1,\ldots, w_q\}$ is $\ogc$-coconvex as no directed shortest $(x,y)$-path contains a vertex of $W$, for any $x,y\notin W$. Thus, by Proposition~\ref{prop:coconvex_sets}, any hull $\ogc$-set of $D$ contains at least one vertex of $W$.
    \end{proof}

    \begin{cor}
        There is a strongly oriented graph $D$ such that $\ohng(D) = m(D)-n(D)$.
    \end{cor}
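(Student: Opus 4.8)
The plan is to exhibit an explicit family of strongly oriented graphs in which the forcing argument of Proposition~\ref{prop:coconvex_geo_paths} pins down the exact value $m(D)-n(D)$ from below, while a single well-chosen hull set meets this value from above. For an integer $t\geq 3$, I would define $D_t$ on the vertex set $\{v_0,v_1,\dots,v_{2t-1}\}$ (indices read modulo $2t$) with two kinds of arcs: the arcs $v_j\to v_{j+1}$ of a spanning directed cycle, and the ``chords'' $v_{2i}\to v_{2i+2}$ for each $i\in\{0,\dots,t-1\}$. The spanning directed cycle makes $D_t$ strongly connected, and the chords induce a directed $t$-cycle on the even vertices, so for $t\geq 3$ no digon or repeated arc is created and $D_t$ is a legitimate oriented graph. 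Counting arcs gives $m(D_t)=2t+t=3t$ against $n(D_t)=2t$, so $m(D_t)-n(D_t)=t$; the target is therefore $\ohng(D_t)=t$, comfortably inside the general bound $m-n+2$.

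For the lower bound I would invoke Proposition~\ref{prop:coconvex_geo_paths} once for each odd vertex. Fix $i$ and consider the directed path $v_{2i}\to v_{2i+1}\to v_{2i+2}$: it has length two, but the chord $v_{2i}\to v_{2i+2}$ gives a directed $(v_{2i},v_{2i+2})$-path of length one, so this path is \emph{not} shortest. Its unique internal vertex $v_{2i+1}$ is odd and hence incident only to the cycle arcs $v_{2i}\to v_{2i+1}$ and $v_{2i+1}\to v_{2i+2}$, so $d^-_{D_t}(v_{2i+1})=d^+_{D_t}(v_{2i+1})=1$. Proposition~\ref{prop:coconvex_geo_paths} then forces every hull $\ogc$-set to contain $v_{2i+1}$. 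As the $t$ odd vertices are distinct, any hull $\ogc$-set has at least $t$ elements, giving $\ohng(D_t)\geq t$.

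For the matching upper bound I would show that $S=\{v_1,v_3,\dots,v_{2t-1}\}$ is already a geodetic interval set. Indeed, for each even vertex $v_{2i}$, the odd vertices $v_{2i-1}$ and $v_{2i+1}$ both lie in $S$; since $v_{2i-1}$ has $v_{2i}$ as its only out-neighbour and $v_{2i+1}$ has $v_{2i}$ as its only in-neighbour, the unique shortest directed $(v_{2i-1},v_{2i+1})$-path is $v_{2i-1}\to v_{2i}\to v_{2i+1}$, which passes through $v_{2i}$. Hence every even vertex lies in $\oifg(S)$, so $\oifg(S)=V(D_t)$ and $S$ is in particular a hull $\ogc$-set, yielding $\ohng(D_t)\leq |S|=t$. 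Combining the two bounds gives $\ohng(D_t)=t=m(D_t)-n(D_t)$, which proves the corollary.

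I expect the main obstacle to be the design of the construction itself rather than any single computation: one must arrange the graph so that the forced vertices are simultaneously \emph{necessary} (each isolated by a short non-shortest detour, via the coconvex argument) and \emph{sufficient} to generate the whole vertex set (so that no additional ``seed'' vertices from the initial cycle are required, unlike the general bound, which always pays an extra $+2$). A lone detour forces one vertex but does not by itself constitute a hull set, so the surplus would dominate; the necklace layout is exactly what lets the forced odd vertices mutually reconstruct all even vertices and thereby absorb that surplus. The remaining care is routine: verifying strong connectivity, checking that the chords create no digons (which is why $t\geq 3$ is imposed), and confirming the arc count $m-n=t$.
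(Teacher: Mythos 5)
Your proof is correct, and it follows the same two-step template as the paper — apply Proposition~\ref{prop:coconvex_geo_paths} to force vertices into every hull set, then exhibit a matching hull set — but with a genuinely different witness graph. The paper takes a directed triangle $(u,v,w)$ and attaches $k\geq 2$ parallel directed paths $P_i=(u,z_1^i,z_2^i,v)$, each rendered non-shortest by the arc $(u,v)$; each $P_i$ then forces \emph{some} internal vertex (it has two), and one internal vertex per path suffices, giving $\ohng(D)=k=m(D)-n(D)$. Your necklace $D_t$ instead shortcuts every other vertex of an even directed cycle, so each forced detour has a \emph{unique} internal vertex: the forced set $\{v_1,v_3,\dots,v_{2t-1}\}$ is exactly determined, and it is moreover already an interval set, i.e., $\oifg(S)=V(D_t)$ in a single application of the interval function, whereas the paper's hull set needs iteration to absorb the triangle and the remaining path vertices. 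The paper's construction is marginally more economical to describe; yours buys a cleaner lower bound (every hull set contains precisely those $t$ vertices, with no exchange argument among internal vertices of a path) and even shows $\oing(D_t)=t$ for free. Your checks on digons (hence $t\geq 3$), strong connectivity via the spanning cycle, and the count $m-n=t$ are all accurate, and the uniqueness argument for the shortest $(v_{2i-1},v_{2i+1})$-path is sound since any such path would otherwise have to visit $v_{2i}$ twice.
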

    \begin{proof}
        Let $D$ be the oriented graph obtained from an oriented triangle $(u,v,w)$ by the addition of $k\geq 2$ directed paths $P_i=(u,z_1^i,z_2^i,v)$, for each $i\in\{1,\ldots, k\}$.
        
        By Proposition~\ref{prop:coconvex_geo_paths}, note that at least one internal vertex of each $P_i$ must belong to any hull set of $D$, which is also sufficient to build a hull set of $D$.
    \end{proof}
        

    We now address to our results on tournaments. Let us first argue that, for a non-trivial strong tournament, two vertices always suffice to obtain a hull set.
    \begin{prop}
        \label{prop:strong-tourn-upbound}
        If $T$ is a strong non-trivial tournament, then: $$\ohnps(T) = \ohng(T) = 2.$$
    \end{prop}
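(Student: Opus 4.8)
The plan is to prove the single inequality $\ohnps(T)\le 2$ by exhibiting a two-element hull $\opsc$-set, and then to observe that the very same pair also works for the geodetic convexity. Indeed, for every ordered pair $u,v$ a shortest directed $(u,v)$-path of length two is in particular a geodesic, so $\oifps(u,v)\subseteq\oifg(u,v)$; by monotonicity of the interval functions this gives $\ohullps(S)\subseteq\ohullg(S)$ for all $S$. Hence any hull $\opsc$-set is a hull $\ogc$-set, and a single witnessing pair bounds both parameters from above by $2$. The matching lower bound is immediate: for a single vertex $w$ there is no directed path of length two from $w$ to $w$, so $\oifps(w,w)=\oifg(w,w)=\{w\}$, the hull of a singleton is the singleton, and this cannot be all of $V(T)$ since a non-trivial strong tournament has $n(T)\ge 3$.

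It is convenient to reformulate the $\opsc$-closure for tournaments. Since between any two vertices there is exactly one arc, a directed path $u\to x\to v$ is shortest exactly when there is no arc from $u$ to $v$, i.e. exactly when $\{u,x,v\}$ induces a directed triangle. Consequently, for an arc $x\to y$ the set $\oifps(\{x,y\})$ consists of $x,y$ together with all vertices completing a directed triangle on $x\to y$, and a set $C$ is $\opsc$-convex if and only if no directed triangle of $T$ has exactly two of its vertices in $C$. Thus computing the $\opsc$-hull amounts to closing under directed-triangle completion.

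I would then argue by induction on $n(T)$. In the base case $n(T)=3$ the only strong tournament is the directed triangle, and any two vertices complete to the third, so $\ohnps(T)=2$. For $n(T)\ge 4$ I would use two facts about strong tournaments (see~\cite{BG2008}): (i) every vertex lies on a directed triangle; and (ii) there is a vertex $v$ for which $T-v$ is still strong (for $n(T)=4$ this already follows from (i), by deleting the vertex off a chosen directed triangle). Applying the induction hypothesis to $T-v$ yields a pair $\{x,y\}$ whose $\opsc$-hull within $T-v$ equals $V(T)\setminus\{v\}$. Every directed triangle of $T-v$ is a directed triangle of $T$, so intersecting the $\opsc$-hull of $\{x,y\}$ taken in $T$ with $V(T)\setminus\{v\}$ gives a set that is $\opsc$-closed in $T-v$ and contains $\{x,y\}$; therefore this hull contains $V(T)\setminus\{v\}$. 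Finally, by (i) the vertex $v$ lies on a directed triangle of $T$ whose other two vertices are already in the hull, so $V(T)\setminus\{v\}$ is not $\opsc$-convex in $T$. Since the hull is convex and contains $V(T)\setminus\{v\}$, it must be $V(T)$, so $\{x,y\}$ is a hull $\opsc$-set and $\ohnps(T)\le 2$.

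The main obstacle is fact (ii): that a strong tournament on at least four vertices admits a vertex whose deletion keeps it strong, which is the hinge of the induction and which I would invoke from the literature. Fact (i) is elementary and self-contained: if $N^+(v)$ sent no arc to $N^-(v)$, then from $N^+(v)$ one could never reach $v$ again, contradicting strong connectivity, and the forced arc from $N^+(v)$ to $N^-(v)$ closes a directed triangle through $v$. The remaining care is purely bookkeeping, namely checking that the $\opsc$-hull cannot shrink when the vertex $v$ and its incident arcs are added back, which is exactly what lets the induction propagate from $T-v$ to $T$.
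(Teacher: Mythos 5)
Your proof is correct, but it follows a genuinely different route from the paper's. The paper avoids induction entirely: it picks a pair $u,v$ lying on a directed $C_3$ that maximizes $|\ohullps(\{u,v\})|$ over all such pairs, shows the hull $H$ induces a strong subtournament (by an ear-decomposition-style argument), shows no vertex outside $H$ can have both an in- and an out-neighbor in $H$ (convexity would force all arcs between the two resulting parts of $H$ in one direction, contradicting strongness of $H$), partitions $\overline{H}$ into vertices dominating $H$ and vertices dominated by $H$, and finally uses maximality of $H$ to forbid any arc from $\overline{H}^-$ to $\overline{H}^+$, contradicting strong connectivity of $T$. You instead reformulate $\opsc$-convexity in tournaments as closure under directed-triangle completion (a correct and genuinely useful observation: since exactly one arc joins each pair, $z\in\oifps(u,v)\setminus\{u,v\}$ iff $\{u,z,v\}$ induces a directed triangle) and then induct on $n(T)$ via a vertex $v$ with $T-v$ strong; the triangle-closure lemma makes the restriction/extension step (hulls cannot shrink when $v$ is restored) essentially immediate, and your elementary proof of fact (i) is fine. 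What each approach buys: yours is modular and transparent once the reformulation is in place, but it hinges on the nontrivial classical fact (ii) that a strong tournament on at least four vertices has a vertex whose deletion preserves strongness, which you correctly flag and which follows from Moon's vertex-pancyclicity theorem (a cycle of length $n-1$ exists; delete the vertex it misses) as found in~\cite{BG2008}; the paper's extremal argument needs only the existence of directed triangles in strong tournaments and is self-contained at the price of a more delicate case analysis. Your lower bound and the reduction of $\ohng$ to $\ohnps$ via $\oifps(u,v)\subseteq\oifg(u,v)$ match the paper's reasoning.
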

    \begin{proof}
        First, suppose $T$ is a strong tournament. Since $T$ is strongly connected and non-trivial, then $T$ has at least three vertices and any hull $\opsc$-set (resp. $\ogc$-set) must contain at least two vertices.
        
        On the other hand, note that any hull $\opsc$-set is a hull $\ogc$-set, so it suffices to prove that $\ohnps(T) \leq 2$.

        Observe that any non-trivial strong tournament must contain directed cycles on three vertices.
        Let $u,v\in V(T)$ be such that $u,v$ lie in a directed $C_3$ and such that $|\ohullps(\{u,v\})|$ is maximized among all pairs of such distinct vertices $u,v\in V(T)$. Let $H = \ohullps(\{u,v\})$. We want to prove that $H = V(G)$. By contradiction, suppose that $\overline{H}= V(T)\setminus H \neq \emptyset$ and assume w.l.o.g. that $(u,v)\in A(T)$. 
        
        Since $u,v$ lie in a directed $C_3$, note that $H$ induces a strong sub-tournament of $T$. Indeed there exists a (shortest) directed $(v,u)$-path $P$ of length two in $T$ and any other vertex $w$ that is added to $H$ can be seen as the addition of an ear to the previous ear decomposition that we had before the inclusion of $w$, which started with the directed $C_3$ that contains $u$ and $v$.
        
        Let us prove that there is no vertex $w\in \overline{H}$ such that $N_T^-(w)\cap H\neq \emptyset$ and $N_T^+(w)\cap H\neq\emptyset$. Indeed, since $T$ is a tournament, any vertex of $H$ is either the head or the tail of an arc with $w$. Consequently, $\{N_T^-(w)\cap H\neq \emptyset, N_T^+(w)\cap H\neq\emptyset\}$ would be a partition of $H$ into two non-empty subsets and any edge linking such subsets must be oriented towards the endpoint in $N^+(w)\cap H\neq\emptyset\}$, as $H$ is $\opsc$-convex and $w\notin H$. Thus, this contradicts the fact that $H$ induces a strong sub-tournament of $T$.
        
        Consequently, $\overline{H}$ can be partitioned into two subsets $\overline{H}^+ = \{v\in V(T)\mid v\in \overline{H}\text { and } H\subseteq N_T^+(v)\}$ and $\overline{H}^- = \{v\in V(T)\mid v\in \overline{H}\text { and } H\subseteq N_T^-(v)\}$. Neither $\overline{H}^+$ nor $\overline{H}^-$ can be empty, as $T$ is strong. Note that the existence of an arc $(w,z)\in A(T)$ such that $w\in \overline{H}^-$ and $z\in \overline{H}^+$ would imply that $\ohullps(\{w,z\})\supseteq \{w,z\}\cup H$, contradicting the maximality of $H$, as $w,z$ would lie in a directed $C_3$ having a larger convex $\opsc$-hull. Thus, each edge having an endpoint in $\overline{H}^+$ and the other in $V(T)\setminus \overline{H}^+$ is oriented towards the one in $V(T)\setminus \overline{H}^+$, contradicting the hypothesis that $T$ is strong.
    \end{proof}

    Let us now argue that we just need to focus on strong tournaments, when computing not only the hull number, but also the interval number, of a tournament in the geodetic and $\opsc$ convexities.

    \begin{prop}
        \label{prop:hull_tournament_sum}
        If $T$ is a tournament, $D_1,\ldots, D_k$ are the strong components of $T$, for some positive integer $k$, and $\xc\in\{\ogc,\opsc\}$ then:
         $$\oinx(T) = \sum_{i=1}^k \oinx(D_i)\text{ and }\ohnx(T) = \sum_{i=1}^k \ohnx(D_i).$$
        In particular, if  there are $\ell \le k$ non-trivial strong components: $$\ohnps(T)=\ohng(T) = |Ext(T)|+2\ell.$$
    \end{prop}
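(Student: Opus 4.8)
The plan is to reduce everything to a single structural identity expressing that, in a tournament, the interval operation acts independently on each strong component. I would first record the classical fact that the condensation of a tournament is a transitive tournament: the strong components $D_1,\dots,D_k$ admit a total order in which, for $a<b$, \emph{every} vertex of $D_a$ has an arc to \emph{every} vertex of $D_b$. Indeed, between any two distinct components there is at least one arc (as $T$ is a tournament), and it cannot point backwards without merging the two components; since every pair of vertices is adjacent, all such arcs go forward. The consequence I would exploit is that any pair $u,v$ lying in distinct components is joined by a single arc, so its unique shortest directed path has length one. In particular such a pair captures no internal vertex in either the $\ogc$- or the $\opsc$-interval (for $\opsc$ there is simply no shortest $(u,v)$- or $(v,u)$-path of length two, since the arc already realises the distance one).

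Conversely, for $u,v$ in the same component $D_i$, no directed path between them can leave $D_i$: leaving would require a forward arc into a later component with no arc back, while a backward arc does not exist. Hence distances and shortest directed paths between vertices of $D_i$ computed in $T$ coincide with those computed in the induced subtournament $D_i$, and for $\opsc$ the midpoint of any $(u,v)$-path of length two is forced to lie in $D_i$. Combining the two cases, I would establish the key identity, for every $S\subseteq V(T)$ and every component $D_i$:
$$\oifx(S)\cap V(D_i)=\oi_{\xc}^{D_i}(S\cap V(D_i)),$$
where $\oi_{\xc}^{D_i}$ denotes the interval function of the induced subdigraph $D_i$; the inclusion ``$\supseteq$'' is clear, and ``$\subseteq$'' follows since an internal vertex of $D_i$ can only arise from an in-component pair.

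From this identity the interval-number formula is immediate: $S$ is an interval $\oxc$-set of $T$ iff $\oifx(S)=V(T)$ iff $S\cap V(D_i)$ is an interval $\oxc$-set of each $D_i$, and since $|S|=\sum_i|S\cap V(D_i)|$ the components can be optimised independently, giving $\oinx(T)=\sum_i\oinx(D_i)$. For the hull number I would iterate the identity: a straightforward induction on $t$ yields $\oifx^{t}(S)\cap V(D_i)=(\oi_{\xc}^{D_i})^{t}(S\cap V(D_i))$, whence $\ohullx(S)\cap V(D_i)=\ohull_{\xc}^{D_i}(S\cap V(D_i))$ and the same independent-optimisation argument gives $\ohnx(T)=\sum_i\ohnx(D_i)$. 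The \textbf{main obstacle}, and the place where the tournament hypothesis is essential, is precisely the cross-component claim: in a general oriented graph two vertices of different components need not be adjacent, so shortest paths could wander through several intermediate components and the clean restriction identity would fail. I would therefore make the length-one argument fully rigorous in both convexities before relying on it.

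Finally, for the closed formula I would invoke Proposition~\ref{prop:strong-tourn-upbound}, which gives $\ohnps(D_i)=\ohng(D_i)=2$ for each of the $\ell$ non-trivial components, while each trivial component is a single vertex contributing $1$. It then remains to identify $\ext(T)$ with the set of vertices lying in trivial components. A vertex in a non-trivial component lies on a directed triangle (a strong tournament on at least three vertices is vertex-pancyclic), and so is neither transitive nor a source nor a sink; a single-vertex component $\{v\}$ is always transitive in $T$, since any $u\to v\to w$ forces $u$ into a strictly earlier and $w$ into a strictly later component, whence $u\to w$; and the only candidates for a global source or sink are such vertices of $D_1$ and $D_k$, already among the trivial-component vertices. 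Thus $|\ext(T)|=k-\ell$, and $\sum_i\ohnx(D_i)=(k-\ell)\cdot 1+\ell\cdot 2=|\ext(T)|+2\ell$ gives the stated value.
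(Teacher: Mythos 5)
Your proof is correct and follows essentially the same route as the paper: the key observation that in a tournament any shortest directed path with an internal vertex lies entirely within one strong component (since the condensation is a transitive tournament), the resulting component-wise decomposition of interval and hull sets, and Proposition~\ref{prop:strong-tourn-upbound} supplying the value $2$ per non-trivial component. Your write-up is in fact more explicit than the paper's in two places it treats briskly --- the restriction identity $\oifx(S)\cap V(D_i)=\oi_{\xc}^{D_i}(S\cap V(D_i))$ with its induction for hulls, and the identification $|\ext(T)|=k-\ell$ via directed triangles in non-trivial components --- but these are elaborations of the same argument, not a different one.
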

    \begin{proof}Note that in a tournament $T$, all strong components are extreme, i.e. source, sink or transitive.
    By Corollary~\ref{cor:extremecomponentsarecoconvex}, we need to include at least one vertex of each of these components to any interval or hull set of $T$ in the $\ogc$ and $\opsc$ convexities. Futhermore, if $w$ lies in a shortest directed $(u,v)$-path with $u\neq w \neq v$ in $T$, then they all must belong to a same strong component of $T$. Thus, the result follows from Proposition~\ref{prop:strong-tourn-upbound} as two vertices always suffice for each non-trivial strong component.
    \end{proof}

    \subsection{Two-path convexity}
    \label{subsec:tournaments}

    The proof of Proposition \ref{prop:strong-tourn-upbound} can be used to show the same result for $\ohnp(T)$ of strong tournaments. 
    In~\cite{HW1996}, the authors study convex subsets of a given tournament $T$ in the $\opc$-convexity. As their focus is to obtain all convex subsets of a given tournament, they do a deep analysis on the structure of such convex subsets from which they deduce Proposition~\ref{prop:ohnp_torneio}. 
    
    \begin{prop}[\!\!\cite{HW1996}]
    \label{prop:ohnp_torneio}
        If $T$ is a non-trivial tournament, then $\ohnp(T)= 2$.
    \end{prop}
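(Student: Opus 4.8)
The plan is to prove both inequalities. The lower bound $\ohnp(T)\geq 2$ is immediate, since every single vertex forms a convex set and $T$ is non-trivial. For the upper bound I would split into the strong and the non-strong case. For the strong case I would exploit the fact that every shortest directed path of length two is in particular a directed path of length two, so that $\oifps(u,v)\subseteq\oifp(u,v)$ for all $u,v$ and hence $\ohullps(S)\subseteq\ohullp(S)$ for every $S$. Consequently, every hull $\opsc$-set is a hull $\opc$-set, and Proposition~\ref{prop:strong-tourn-upbound} yields $\ohnp(T)\leq\ohnps(T)=2$ whenever $T$ is a non-trivial strong tournament. (Alternatively, one can rerun the maximality argument of Proposition~\ref{prop:strong-tourn-upbound} verbatim in the $\opc$-convexity, which is in fact simpler there since the witnessing length-two paths need not be shortest.)

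For the non-strong case, let $D_1,\ldots,D_k$, with $k\geq 2$, denote the strong components of $T$ listed in the acyclic order of the condensation. Since $T$ is a tournament, all arcs between two distinct components point the same way, so the condensation is a transitive tournament and $D_i$ dominates $D_j$ (every arc goes from $D_i$ to $D_j$) whenever $i<j$. I would then fix an arbitrary $u\in D_1$ in the source component and $v\in D_k$ in the sink component and claim that $\ohullp(\{u,v\})=V(T)$. First, every vertex $w$ lying in a middle component $D_i$ with $1<i<k$ satisfies $u\to w$ and $w\to v$, so $u\to w\to v$ is a directed path of length two and $w\in\oifp(u,v)$; thus all middle components enter the hull at once.

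It remains to fill the source component $D_1$ and the sink component $D_k$. For $D_k$, I would take $x\in D_k$ and a directed path $x=x_0\to x_1\to\cdots\to x_m=v$ inside the strong component $D_k$, processing it from its end: each $x_t$ has the in-neighbor $u$ already in the hull (as $D_1$ dominates $D_k$) and, inductively, the out-neighbor $x_{t+1}$ already in the hull, so $u\to x_t\to x_{t+1}$ places $x_t$ in the hull. Symmetrically, for $D_1$, I would take $x\in D_1$ and a directed path $u=y_0\to y_1\to\cdots\to y_m=x$ inside $D_1$, processing it from its start: each $y_t$ has an out-neighbor in $D_2\cup\cdots\cup D_k$ already in the hull (since $D_1$ dominates every later component and those are already captured) together with the in-neighbor $y_{t-1}$ already in the hull, so $y_t$ is an internal vertex of a directed path of length two with both endpoints in the hull. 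Hence $\ohullp(\{u,v\})=V(T)$, giving $\ohnp(T)\leq 2$, and together with the lower bound this yields $\ohnp(T)=2$.

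The main obstacle is precisely this within-component filling: the required length-two paths must leave and re-enter a strong component, so they cannot be shortest. This is exactly why the statement holds for $\opc$ but fails for $\ogc$ and $\opsc$, where Proposition~\ref{prop:hull_tournament_sum} forces the hull number to grow with the number of components. The points I would verify carefully are that every vertex of $D_k$ indeed has an in-neighbor outside $D_k$ and every vertex of $D_1$ an out-neighbor outside $D_1$ (both guaranteed by $k\geq 2$ and the transitivity of the condensation), and that the inductive order of insertion is consistent, namely that the middle components and $D_k$ are captured before $D_1$.
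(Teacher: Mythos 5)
Your proof is correct, but it is genuinely more self-contained than what the paper does: the paper attributes Proposition~\ref{prop:ohnp_torneio} to Haglin and Wolf's structural analysis of \emph{all} $\opc$-convex subsets of a tournament, and only remarks that the maximality argument of Proposition~\ref{prop:strong-tourn-upbound} can be rerun to handle strong tournaments; the non-strong case is left entirely to the citation. Your strong case coincides with the paper's remark (and your containment $\oifps(u,v)\subseteq\oifp(u,v)$, hence $\ohullps(S)\subseteq\ohullp(S)$ by monotonicity of the hull operator, is a clean way to import Proposition~\ref{prop:strong-tourn-upbound} rather than re-proving it). Your non-strong case is the genuinely new part relative to the paper's text: using that the condensation of a tournament is a transitive tournament, you fill all middle components in one application of $\oifp$ via $u\to w\to v$, and then fill $D_k$ (resp. $D_1$) by induction along a directed path inside the component, using the cross-component arc from $u$ (resp. to $v$) as the other leg of each length-two path; all the steps check out, including the degenerate cases where $D_1$ or $D_k$ is trivial and $k=2$, and the insertion order you flag (middle components and $D_k$ before $D_1$) is indeed consistent. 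Your closing observation is also the right diagnosis: these length-two paths leave and re-enter a strong component, so they are never shortest, which is exactly why the argument is specific to $\opc$ and why $\ohng$ and $\ohnps$ instead grow with the number of components by Proposition~\ref{prop:hull_tournament_sum}. What each approach buys: yours makes the result (and hence the split and cobipartite corollaries that depend on it) self-contained at the cost of a page of case analysis; the paper's citation buys, beyond the equality $\ohnp(T)=2$, a full description of the convex sets of a tournament, which is more than this proposition needs.
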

    
    
    
    Let us now observe that when computing $\ohnp(D)$ of a digraph $D$, we just need to pick at most two vertices of any subtournament $D'\subseteq D$. We actually prove something stronger:

\begin{prop}
\label{prop:hull_p3_subdigraph}
If $D'$ is a subdigraph of $D$ and $S$ is a minimum hull set of $D$ in the $\opc$-convexity, then $|S\cap V(D')|\leq \ohnp(D')$.
\end{prop}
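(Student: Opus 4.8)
The plan is to use an \emph{exchange argument}. The intuition is that $S\cap V(D')$ is forced to generate at least all of $V(D')$ using only the structure available inside $D'$, and $\ohnp(D')$ is by definition the least number of vertices needed to do so; hence if $S$ used strictly more vertices of $D'$ than $\ohnp(D')$, we could swap them out for a cheapest generator of $V(D')$ and contradict the minimality of $S$.

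Concretely, I would set $S'=S\cap V(D')$ and assume, for contradiction, that $|S'|>\ohnp(D')$. Fix a minimum hull $\opc$-set $T'$ of $D'$, so $|T'|=\ohnp(D')<|S'|$ and the $\opc$-convex hull of $T'$ computed inside $D'$ equals $V(D')$. Put $S''=(S\setminus S')\cup T'$; then $|S''|\le |S|-|S'|+|T'|<|S|$, so it suffices to prove that $S''$ is still a hull $\opc$-set of $D$, i.e.\ $\ohullp(S'')=V(D)$, to reach the contradiction.

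The one point that needs verification is a monotonicity under passing to a subdigraph: since $D'$ is a subdigraph of $D$, every directed $(u,v)$-path of length two in $D'$ is also such a path in $D$, so $\oifp(X)$ taken in $D'$ is contained in $\oifp(X)$ taken in $D$ for all $X\subseteq V(D')$. Iterating the interval function, the $\opc$-convex hull of $X$ in $D'$ is contained in the $\opc$-convex hull of $X$ in $D$. Applying this with $X=T'$, the set $V(D')$ (the hull of $T'$ in $D'$) is contained in $\ohullp(T')\subseteq\ohullp(S'')$, using $T'\subseteq S''$ and monotonicity of the hull operator. In particular $S'\subseteq V(D')\subseteq\ohullp(S'')$, and since $S\setminus S'\subseteq S''\subseteq\ohullp(S'')$ as well, we get $S\subseteq\ohullp(S'')$. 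As $\ohullp(S'')$ is $\opc$-convex and contains $S$, minimality of the convex hull gives $V(D)=\ohullp(S)\subseteq\ohullp(S'')$, hence $\ohullp(S'')=V(D)$, as desired.

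I do not expect a genuine obstacle here; the proof is a standard replacement argument. The only place demanding a little care is cleanly distinguishing the two hull operators (in $D$ versus in $D'$) and checking the subdigraph-monotonicity of the $\opc$-interval function — which holds precisely because a length-two directed path of $D'$ remains a length-two directed path in the larger digraph $D$, so nothing that was generated inside $D'$ is lost when the computation is carried out in $D$.
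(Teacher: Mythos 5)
Your proof is correct and follows essentially the same route as the paper: the same replacement argument, substituting a minimum hull $\opc$-set of $D'$ for $S\cap V(D')$, justified by the observation that a directed path of length two in $D'$ remains one in $D$, so the hull computed in $D'$ is contained in the hull computed in $D$. If anything, you spell out the final step (that the modified set is still a hull set of $D$, via monotonicity and idempotence of the hull operator) more explicitly than the paper does.
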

\begin{proof}
The idea is that in the $\opc$-convexity having ``extra neighbors'' in $D$ does not change the fact that if $u,v$ belong to a set $S\subseteq V(D')$ and there is a directed $u,v$-path of length two passing through $w$ in $D'$, then $w\in \oifp(\{u,v\})$ not only in $D'$, but also in $D$. Thus, no \emph{minimum} hull set of $D$ may contain more than $\ohnp(D')$ vertices in $D'$.

Formally, by contradiction, let $D'$ be a subdigraph of $D$ and $S$ be a minimum hull set of $D$ in the $\opc$-convexity such that $|S\cap V(D')| > \ohnp(D')$.

Let $S'$ be a minimum hull set of $D'$ in the $\opc$-convexity, i.e. $|S'|= \ohnp(D')$. Define $S^* = (S\setminus(S\cap V(D'))\cup S'$.   Note that since $D'$ is a subdigraph of $D$, $S' \subseteq S^*$ and $S'$ is a hull set of $D'$ in the $\opc$-convexity, then $V(D')\subseteq \ohullp(S^*)$ and, as $S$ is a hull set of $D$, we deduce that, by the construction of $S^*$ is also a hull set of $D$ in the $\opc$-convexity. However, since $|S^*|<|S|$ as $|S\cap V(D')| > \ohnp(D') = |S'|$, we reach a contradiction to the minimality of $S$.
\end{proof}

    As a direct consequence of Propositions~\ref{prop:ohnp_torneio} and~\ref{prop:hull_p3_subdigraph}, we have the following.
    
    \begin{cor}
    \label{cor:ohnp_atmosttwo_subtournament}
    If $T$ is a tournament and $T$ is a subdigraph of an oriented graph $D$, then, for any minimum hull set $S$ of $D$ in the $\opc$-convexity, we have that $|S\cap V(T)|\leq 2$.
    \end{cor}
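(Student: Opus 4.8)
The plan is to simply chain the two immediately preceding propositions, using $T$ itself as the relevant subdigraph. First I would apply Proposition~\ref{prop:hull_p3_subdigraph} with $D' = T$: since $T$ is a subdigraph of $D$ and $S$ is a minimum hull set of $D$ in the $\opc$-convexity, that proposition yields at once the inequality $|S\cap V(T)|\leq \ohnp(T)$. The whole content of the corollary then reduces to bounding $\ohnp(T)$ by $2$.

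Next I would dispatch that bound via Proposition~\ref{prop:ohnp_torneio}. If $T$ has at least two vertices, then $T$ is a non-trivial tournament and Proposition~\ref{prop:ohnp_torneio} gives $\ohnp(T)=2$, whence $|S\cap V(T)|\leq 2$. The only situation not literally covered by that proposition is the trivial tournament on a single vertex, but there the unique vertex is itself the (trivially) minimum hull set, so $\ohnp(T)=1\leq 2$ and the bound still holds. Combining the two steps gives $|S\cap V(T)|\leq \ohnp(T)\leq 2$, as claimed.

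I do not expect any genuine obstacle here, since the statement is a formal corollary of results already in hand: the key conceptual input, namely that extra neighbours in $D$ never help a vertex of $T$ escape being generated by a directed $\opc$-path inside $T$, is exactly what Proposition~\ref{prop:hull_p3_subdigraph} encapsulates. The only point deserving a moment of care is the handling of the single-vertex tournament, because Proposition~\ref{prop:ohnp_torneio} is stated only for non-trivial tournaments; this edge case is settled by the trivial remark above and does not affect the argument.
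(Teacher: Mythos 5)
Your proof is correct and takes essentially the same route as the paper, which presents the corollary as a direct consequence of Propositions~\ref{prop:ohnp_torneio} and~\ref{prop:hull_p3_subdigraph}, chained exactly as you chain them: $|S\cap V(T)|\leq \ohnp(T)\leq 2$. Your explicit treatment of the single-vertex tournament is a small point the paper leaves implicit (Proposition~\ref{prop:ohnp_torneio} is stated only for non-trivial tournaments), and you settle it correctly.
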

    
    We may derive some straightforward consequences of this fact. 
    
    \begin{cor}
        \label{cor:complement_k_partite}
        If $D$ is an oriented graph whose underlying graph is the complement of a $k$-partite graph, then $\ohnp(D)\leq 2k$. In particular, if $D$ is an oriented graph whose underlying graph is cobipartite, then $\ohnp(D)$ can be computed in polynomial time.
    \end{cor}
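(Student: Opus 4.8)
The plan is to expose the clique-partition structure hidden in the complement of a $k$-partite graph and then apply Corollary~\ref{cor:ohnp_atmosttwo_subtournament} part by part. First I would observe that if the underlying graph $G$ of $D$ is the complement of a $k$-partite graph, then $V(D)$ admits a partition $V(D) = C_1 \cup \cdots \cup C_k$ into $k$ cliques: each independent set of the $k$-partite graph becomes a clique in the complement. Consequently, each induced subdigraph $D[C_i]$ is an orientation of a complete graph, that is, a tournament, and in particular a subdigraph of $D$.

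Next, let $S$ be any minimum hull set of $D$ in the $\opc$-convexity. Applying Corollary~\ref{cor:ohnp_atmosttwo_subtournament} to the subtournament $D[C_i]$ for each $i\in\{1,\ldots,k\}$ yields $|S\cap C_i|\leq 2$. Since the $C_i$ partition $V(D)$, summing gives
\[
\ohnp(D) = |S| = \sum_{i=1}^{k} |S\cap C_i| \leq \sum_{i=1}^{k} 2 = 2k,
\]
which establishes the first claim.

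For the cobipartite case we have $k=2$, so the bound becomes $\ohnp(D)\leq 4$, a constant independent of $n(D)$. I would then compute $\ohnp(D)$ by brute force: enumerate all subsets $S\subseteq V(D)$ with $|S|\leq 4$ in nondecreasing order of size (there are $O(n^4)$ of them), and for each test whether $S$ is a hull set by iterating the interval function $\oifp$ to its fixed point, i.e. computing $\ohullp(S)=\bigcup_{i\geq 0}\oifp^{i}(S)$, which stabilizes within $n(D)$ rounds and so takes polynomial time. The smallest $S$ passing the test equals $\ohnp(D)$, and the existence of such an $S$ is guaranteed by $\ohnp(D)\leq 4$; hence the overall procedure runs in polynomial time.

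There is no serious obstacle here, since the content lies entirely in the clique partition together with the already-established per-tournament bound. The only point that needs care is that Corollary~\ref{cor:ohnp_atmosttwo_subtournament} must be applied to a single fixed minimum hull set $S$ simultaneously across all parts $C_i$; but this is exactly what that corollary provides for every subtournament, so summing the inequalities $|S\cap C_i|\leq 2$ is legitimate.
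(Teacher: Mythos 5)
Your proof is correct and follows exactly the route the paper intends: the paper derives this corollary as a ``straightforward consequence'' of Corollary~\ref{cor:ohnp_atmosttwo_subtournament}, i.e.\ partitioning $V(D)$ into $k$ cliques whose induced orientations are subtournaments, bounding $|S\cap C_i|\leq 2$ for a single minimum hull set $S$, and for the cobipartite case ($k=2$) testing all $O(n^4)$ candidate sets of size at most $4$ by iterating $\oifp$ to its fixed point. Your explicit remark that Corollary~\ref{cor:ohnp_atmosttwo_subtournament} applies simultaneously to one fixed minimum hull set across all parts is the only point requiring care, and you handle it correctly.
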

    
     
    
    \begin{cor}
    If $D$ is an oriented graph whose underlying graph is split, then $\ohnp(D)$ can be computed in polynomial time.
    \end{cor}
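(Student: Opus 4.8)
The plan is to exploit the structure of split graphs together with Corollary~\ref{cor:ohnp_atmosttwo_subtournament}. Recall that if $D$ is an oriented graph whose underlying graph $G$ is split, then $V(G)$ partitions as $C\cup I$, where $G[C]$ is complete and $I$ is independent. The key observation is that the orientation of $D$ restricted to $C$ yields a subtournament $T=D[C]$, so by Corollary~\ref{cor:ohnp_atmosttwo_subtournament} any minimum hull $\opc$-set $S$ satisfies $|S\cap C|\leq 2$. The vertices of $I$, on the other hand, may all be forced into a hull set: each vertex of $I$ could be a source or a sink (recall that $I$ is independent, so a vertex in $I$ has neighbors only in $C$) and hence extreme, in which case Proposition~\ref{prop:coconvex_sets} forces it into every hull set. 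Thus a minimum hull set has the form $S=(S\cap C)\cup(S\cap I)$ with $|S\cap C|\leq 2$ and $|S\cap I|\leq |I|$.

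First I would establish that the contribution from $I$ is essentially determined by the coconvexity conditions of Proposition~\ref{prop:coconvex_sets}: every vertex $v\in I$ that is a source or sink must lie in $S$, while the remaining vertices of $I$ are exactly those having both an in-neighbor and an out-neighbor in $C$, and these may be recoverable via two-path intervals. Since $|S\cap C|\leq 2$, the overall size of a minimum hull set is bounded by $|I|+2$, which is polynomial; the point, however, is not merely the bound but that one can \emph{search} efficiently. The plan is to enumerate all $O(n^2)$ choices of the (at most two) vertices of $C$ to include in $S$, together with the forced source/sink vertices of $I$, and then greedily close the candidate set under $\oifp$, checking in each case whether the convex hull equals $V(D)$. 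Because there are only polynomially many candidate subsets of $C$ to try, and computing the $\opc$-hull of a given set takes polynomial time (iterating the interval function $\oifp$ at most $n$ times), the whole procedure runs in polynomial time.

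The main step I expect to require care is arguing that a minimum hull set indeed decomposes so that no more than two clique vertices are ever needed \emph{and} that the non-forced vertices of $I$ can always be captured by iterating two-path intervals starting from a polynomially-searchable seed. The upper bound $|S\cap C|\leq 2$ is clean from Corollary~\ref{cor:ohnp_atmosttwo_subtournament}, but I must be careful that a vertex $v\in I$ which is neither a source nor a sink (so it has both in- and out-neighbors in $C$) genuinely lies in the $\opc$-hull of $C$-vertices together with the other forced vertices: such a $v$ is an internal vertex of a directed path of length two through $C$, hence belongs to $\oifp$ of an appropriate pair, so it need not be added to $S$. Making this reachability precise, and confirming that the greedy closure correctly identifies whether a given seed is a hull set, is where the argument must be watertight.

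With these pieces in place, the algorithm is: enumerate the (at most two) vertices of $C$ to seed, add all source/sink vertices of $I$, compute the $\opc$-hull, and return the smallest seed whose hull is all of $V(D)$. Since the number of seeds is $O(n^2)$ and each hull computation is polynomial, $\ohnp(D)$ can be computed in polynomial time, as claimed.
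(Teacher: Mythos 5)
You have a genuine gap: your enumeration only tries seeds of the form (forced sources/sinks of $I$) $\cup$ (at most two vertices of $C$). Corollary~\ref{cor:ohnp_atmosttwo_subtournament} only gives the upper bound $|S\cap C|\leq 2$ for a minimum hull set $S$; it does \emph{not} allow you to assume that the at most two non-forced vertices of $S$ lie in $C$ — they may lie in $I$. Concretely, take $C=\{a,b,d\}$ oriented with arcs $(b,a)$, $(b,d)$, $(d,a)$, and $I=\{s,t,w\}$ with arcs $(s,a)$, $(a,w)$, $(w,b)$, $(b,t)$. The underlying graph is split and $C$ is even a maximal clique ($w$ misses $d$, $s$ misses $b,d$, $t$ misses $a,d$). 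Here $s$ is a source and $t$ a sink, so the forced set is $\{s,t\}$, and $\ohullp(\{s,t,w\})=V(D)$: the two-paths $(s,a,w)$ and $(w,b,t)$ generate $a$ and $b$, and then $(b,d,a)$ generates $d$. Hence $\ohnp(D)=3$. However, $\ohullp(\{s,t,c\})=\{s,t,c\}$ for each $c\in\{a,b,d\}$ (one checks there is no directed two-path between any two of these vertices), so every seed your algorithm tries with at most one clique vertex stalls, and the smallest hull set it finds is $\{s,t,a,b\}$ of size $4$. Your algorithm would output $4$ instead of $3$, so restricting the search to clique seeds is unsound, even though $|\{s,t,w\}\cap C|=0\leq 2$ is perfectly consistent with the corollary.

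The fix is exactly what the paper does, and it costs nothing algorithmically. The paper proves (via Proposition~\ref{prop:ohnp_torneio} and the argument of Proposition~\ref{prop:hull_p3_subdigraph}) that $S_s\cup S_t\cup\{u,v\}$ is always a hull set, where $\{u,v\}$ is a minimum hull set of the tournament $D[K]$ and $S_s\cup S_t$ are the forced sinks and sources in the independent part — this part of your argument, including the observation that every non-forced vertex of $I$ has both an in- and an out-neighbour in $K$ and is therefore absorbed once $K$ is in the hull, matches the paper. But the conclusion drawn is only that the minimum equals $|S_s\cup S_t|$ plus \emph{at most two other vertices}, located anywhere in $V(D)$; the search must therefore range over all $O(n^2)$ subsets of at most two \emph{arbitrary} vertices added to the forced set, each tested by a polynomial-time hull computation. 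With that widened enumeration your algorithm becomes correct; as written, it is not.
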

    \begin{proof}
        Let $\{S,K\}$ be a split partition of $V(G)$ such that $K$ is a maximal clique and $S$ is an independent set. Note that such partition can be obtained in linear time.
        
        Let $S_s$ and $S_t$ be the sets of sinks and sources of $G$ in $S$, respectively.
        Let us prove that if $X$ is a minimum hull set of $D$ in the $\opc$-convexity, then $X$ has exactly the vertices in $S_s\cup S_t$ plus at most two other vertices.
        
        If $|K| = 1$, then each vertex of $S$ is either a sink or a source and the result trivially follows. Suppose then that $|K|\geq 2$. By Proposition~\ref{prop:ohnp_torneio}, let $\{u,v\}$ be a minimum hull set of $D[K]$. By the proof of Proposition~\ref{prop:hull_p3_subdigraph}, note that $K\subseteq \ohullp(\{u,v\})$ in $D$. Moreover, each vertex that is neither a sink nor a source of $D$ in $S$ must have at least one in-neighbor and at least one out-neighbor in $K$. Consequently, each vertex that is neither a sink nor a source vertex also belongs to $\ohullp(\{u,v\})$. Thus $S_s\cup S_t\cup \{u,v\}$ is a hull set in the $\opc$-convexity of $D$.
        
        By Proposition~\ref{prop:coconvex_sets}, we know that $S_s\cup S_t$ must belong to any hull set of $D$ in the $\opc$-convexity. Thus, it remains to check whether at most two other vertices are needed, which can be done in polynomial time.
    \end{proof}

    \section{Hardness results}
    \label{sec:hardness}
    
    \subsection{A meta-theorem for computational complexity}
    \label{subset:meta-theorem}
    
    Let $D$ be an oriented graph. Let $B_D$ be the oriented graph such that $B_D$ has two vertices $v^i$ and $v^o$ for each vertex $v\in V(D)$ and $B_D$ has the following arcs: $(v^i,v^o)$, for each $v\in V(D)$; $(u^o,v^i)$ , for each $(u,v)\in A(D)$.

    \begin{prop}
    \label{prop:equivalentBipartiteInstances}
        For any connected oriented graph $D$ such that $n(D)\geq 2$, we have $\oing(D)=\oing(B_D)$ and $\ohng(D)=\ohng(B_D)$.
    \end{prop}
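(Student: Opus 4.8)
The plan is to exploit the ``vertex-splitting'' structure of $B_D$. Let $\pi\colon V(B_D)\to V(D)$ be the map sending both $v^i$ and $v^o$ to $v$; it contracts each arc $(v^i,v^o)$ and identifies $B_D$ with an expansion of $D$. First I would record the correspondence between directed paths: a directed path $v_0\to\cdots\to v_\ell$ of $D$ lifts to $v_0^i\to v_0^o\to v_1^i\to\cdots\to v_\ell^i\to v_\ell^o$ in $B_D$, and conversely every directed path of $B_D$ projects under $\pi$ to a directed walk of $D$. Counting the two arc types ($i\to o$ inside a gadget, $o\to i$ across gadgets) gives $d_{B_D}(u^i,v^o)=2d_D(u,v)+1$, $d_{B_D}(u^o,v^i)=2d_D(u,v)-1$ and $d_{B_D}(u^i,v^i)=d_{B_D}(u^o,v^o)=2d_D(u,v)$; in particular the projection of a shortest $B_D$-path is a shortest (hence simple) $D$-path, and every shortest $D$-path lifts to a shortest $B_D$-path.

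For $\oing(D)\le\oing(B_D)$ and $\ohng(D)\le\ohng(B_D)$ I would prove $\pi(\oifg(A))\subseteq\oifg(\pi(A))$ for every $A\subseteq V(B_D)$: an internal vertex of a shortest $a$--$b$ path of $B_D$ projects either to an endpoint or to an internal vertex of the projected shortest $D$-path. As $\pi$ is surjective, iterating yields $\pi(\ohullg(A))\subseteq\ohullg(\pi(A))$, so the $\pi$-image of an interval (resp. hull) $\ogc$-set of $B_D$ is an interval (resp. hull) $\ogc$-set of $D$ of no larger cardinality.

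The reverse inequalities require lifting a minimum set of $D$ to one of $B_D$ of the same size, keeping exactly one copy per chosen vertex. The engine is: if a convex $H\subseteq V(B_D)$ contains at least one copy of $a$ and at least one copy of $b$, then for every $w$ internal to a shortest $a$--$b$ (or $b$--$a$) path of $D$ both $w^i,w^o\in H$, since for each of the four combinations of copies of $a,b$ the lifted shortest path passes through both $w^i$ and $w^o$. For the hull number I take a minimum hull set $S$, put $v^i$ in $S'$ when $v$ is a source, $v^o$ when $v$ is a sink, and either copy otherwise, so $|S'|=|S|$; an induction along the stages $\oifg^k(S)=V(D)$ shows every vertex of $D$ has at least one copy in $\ohullg(S')$, and since $v^o$ (resp. $v^i$) has $v^i$ (resp. $v^o$) as its unique in- (resp. out-) neighbour, a short argument using any neighbour recovers the missing copy, the source/sink choices handling the extreme vertices. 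This gives $\ohng(B_D)\le\ohng(D)$.

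The interval number is the delicate case and the main obstacle, because a single application of $\oifg$ must already capture the partner copy of every seed on one geodesic between two elements of $S'$. The key claim I would establish is a dichotomy: for every vertex $w$ of an interval set $S$, either some other seed is reachable from $w$, or some other seed reaches $w$. This follows from connectivity together with the fact that $S$ covers $V(D)$ — a neighbour $x$ of $w$ is itself a seed or is internal to a shortest path between two seeds, and in each case one exhibits a seed $\ne w$ in the out- or in-section of $w$, with Proposition~\ref{prop:inoutsections} and Corollary~\ref{cor:extremecomponentsarecoconvex} handling source/sink endpoints cleanly. Given this dichotomy I set $\sigma(w)=w^i$ when a seed is reachable from $w$ and $\sigma(w)=w^o$ otherwise; because $w^i$ is the forced second vertex of every path leaving it and $w^o$ the forced penultimate vertex of every path entering it, the missing copy of each seed lies on a single shortest path to or from the witnessing seed, while non-seeds are covered in one step by the four-combinations observation. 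This yields $\oing(B_D)\le\oing(D)$, and combining the four inequalities gives both equalities.
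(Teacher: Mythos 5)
Your lifting direction is sound, but your projection direction contains a genuine gap, and it sits exactly where the difficulty of the statement lies. The lemma $\pi(\oifg(A))\subseteq\oifg(\pi(A))$ is false when $A$ contains both copies of a single vertex: $(w^o,w^i)$ is then a legitimate pair of $A\times A$, and a shortest directed $(w^o,w^i)$-path in $B_D$ is the lift of a shortest directed \emph{cycle} of $D$ through $w$; its projection is a closed walk, not a path, and your distance formulas (which are correct) apply only when $u\neq v$. Concretely, if $D$ is the directed triangle $(w,x,y)$, then $B_D$ is a directed $C_6$ and $\{w^i,w^o\}$ is an interval (hence hull) $\ogc$-set of $B_D$, while $\pi(\{w^i,w^o\})=\{w\}$ generates nothing in $D$. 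Worse, the step is not repairable: let $D$ consist of three directed triangles $(w,x_j,y_j)$, $j\in\{1,2,3\}$, sharing only the vertex $w$. Each $\{x_j,y_j\}$ is $\ogc$-coconvex (a directed path containing $x_j$ or $y_j$ internally with both endpoints outside $\{x_j,y_j\}$ would have to visit $w$ twice), so $\oing(D)=\ohng(D)=3$; yet the three shortest $(w^o,w^i)$-paths of length $5$ in $B_D$ together cover all of $V(B_D)$, so $\{w^i,w^o\}$ witnesses $\oing(B_D)=\ohng(B_D)=2$. Thus the inequalities $\oing(D)\le\oing(B_D)$ and $\ohng(D)\le\ohng(B_D)$ that you try to extract from $\pi$ are actually false. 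You should know that the paper's own proof has the same blind spot: it asserts that from $u^x,v^y\in S_B$ and a shortest $(u^x,v^y)$-path through $w^i,w^o$ one gets ``$u,v\in S$ and $w$ on a shortest $(u,v)$-path in $D$,'' silently assuming $u\neq v$. So your write-up reproduces, in a form that makes the failure visible, a lacuna of the paper rather than a private error---but as a proof of the proposition it does not go through, and no proof can.

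For the converse direction your argument is correct and genuinely different from the paper's. The paper fixes one copy per vertex by a global rule ($v^o$ on sink strong components, $v^i$ elsewhere) and recovers the partner copy by exhibiting a witness seed in a reachable sink (or co-reachable source) strong component, via Proposition~\ref{prop:inoutsections} and Corollary~\ref{cor:extremecomponentsarecoconvex}, with a separate case for strongly connected $D$. Your per-seed dichotomy (every seed of an interval set has another seed in its in- or out-section, proved from a neighbour together with $\oifg(S)=V(D)$) selects the copy locally and treats the strong and non-strong cases uniformly, and your looser copy-choice for hull sets, which exploits closure of the hull under $\oifg$ to recover partners lazily through any neighbour, is also valid; both rest on the same forced arcs $(w^i,w^o)$. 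So you do obtain clean proofs of $\oing(B_D)\le\oing(D)$ and $\ohng(B_D)\le\ohng(D)$; equality, however, would require the projection direction, and the counterexample above shows that a minimum set of $B_D$ cannot in general be normalized to avoid pairs formed by the two copies of one vertex.
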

    \begin{proof}
        Let $P = (u,w_1,\ldots,w_q,v)$ be a directed shortest $(u,v)$-path in $D$, for some $q\geq 0$. Note that:
        \begin{itemize}
            \item $P^{ii} = (u^i,u^o,w_1^i,w_1^o, \ldots, w_q^i,w_q^o,v^i)$,
            \item $P^{io} = (u^i,u^o,w_1^i,w_1^o, \ldots, w_q^i,w_q^o,v^i,v^o)$,
            \item $P^{oi} = (u^o,w_1^i,w_1^o, \ldots, w_q^i,w_q^o,v^i)$ and
            \item $P^{oo} = (u^o,w_1^i,w_1^o, \ldots, w_q^i,w_q^o,v^i,v^o)$
        \end{itemize}
        are shortest directed paths in $D_B$ that we call \emph{corresponding paths} in $D_B$ to $P$. Note that any directed path in $D_B$ containing $w^i$ as internal vertex must also contain $w^o$ and vice-versa, for any $w\in V(D)$.
        
        Let us first prove that if $S_B$ is an interval (resp. hull) $\ogc$-set of $D_B$, then $S = \{v\in D\mid v^o\in S_B\text{ or }v^i\in S_B\}$ is an interval (resp. hull) $\ogc$-set of $D$. Let $w\in V(D)$ such that $w\notin S$. It implies that $w^o,w^i\notin S_B$. Thus, there exists $u^x,v^y\in S_B$ (resp. $u^x,v^y\in \oifg^k(S_B)$, for some $k\geq 0$) for some $x,y\in\{i,o\}$, and a shortest $(u^x,v^y)$-path $P$ such that $w^o,w^i\in V(P)$. Consequently, $u,v\in S$ (resp. $u,v\in \oifg^k(S)$, for some $k\geq 0$) and $w$ belongs to a shortest $(u,v)$-path in $D$. Thus, $S$ is an interval (resp. hull) $\ogc$-set of $D$.
        
        Conversely, suppose now that $S$ is an interval (resp. hull) $\ogc$-set of $D$. Let $S_B = \{f(v)\mid v\in S\}$ where $f:V(S)\to V(D_B)$ is define as follows:
        
        $$f(v)=\begin{cases}
        v^i &\text{, if } v\text{ does not belong to a sink strong component of }D;\\
        v^o &\text{, otherwise. }
        \end{cases}$$
        
        Let $w^x\in V(D_B)$ such that $w^x\notin S_B$, for some $x\in\{i,o\}$. Suppose first that $w\notin S$. It implies that $w^o,w^i\notin S_B$.  Since $S$ is an interval (resp. hull) $\ogc$-set of $D$, there exists $u,v\in S$ (resp. $u,v\in \oifg^k(S)$, for some $k\geq 0$), and a shortest $(u,v)$-path $P$ such that $w\in V(P)$. Thanks to the corresponding paths in $B_D$, we deduce that $w\in \oifg(S)$ (resp. $w\in \oifg^{k+1}(S)$).
        
        Suppose now that $w\in S$, $w^x\in S$, for some $x\in\{i,o\}$, but $w^y\notin S$, where $y = \{i,o\}\setminus \{x\}$. We need to argue that $w^y\in \oifg(S)$ (resp. $\ohullg(S)$). If $D$ is strongly connected, then $n(D)\geq 3$ and thus $|S|\geq 2$. Let $u^z\in S_B\setminus \{w^x\}$. Since $D$ is strongly connected, there is a (shortest) directed $(w^i, u^z)$-path and a (shortest) directed $(u^z,w^0)$-path. This implies that $w^y\in \oifg(S)$ (resp. $\ohullg(S)$), regardless $y=i$ or $y=o$.
        
        Suppose then that $D$ is not strongly connected. Suppose first that $w$ does not belong to a sink strong component of $D$. It means that $w^i\in S$. By Proposition~\ref{prop:inoutsections}, there is a sink strong component $C'$ of $D$ such that $w_i\notin C'$ and $C'\subseteq S_D^+(w^i)$. By Corollary~\ref{cor:extremecomponentsarecoconvex}, there is a vertex $u^z\in C'\cap S_B$, for some $z\in\{i,o\}$. Consequently, $w^o$ lies in any (shortest) directed $(w^i,u^z)$-path. Analogously, suppose that $w$ belongs to a sink strong component $C$. Thus, $w^o\in S_B$ and, by Proposition~\ref{prop:inoutsections}, there is a source strong component $C'$ of $D$ such that $w_o\notin C'$ and $C'\subseteq S_D^-(w^o)$.  By Corollary~\ref{cor:extremecomponentsarecoconvex}, there is a vertex $u^z\in C'\cap S_B$, for some $z\in\{i,o\}$. Consequently, $w^i$ lies in any (shortest) directed $(u^z,w^o)$-path.
    \end{proof}
    \begin{cor}
        \label{cor:hardnessBipartite}
        Given an oriented graph $D$, if deciding whether $\oing(D)\leq k$ or $\ohng(D)\leq k$ is $\NP$-hard or $\W[i]$-hard parameterized by $k$, for some $i\in\mathbb{Z_+^*}$, then the same holds even if the underlying graph of $D$ is bipartite.
    \end{cor}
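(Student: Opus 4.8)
The plan is to lean entirely on Proposition~\ref{prop:equivalentBipartiteInstances}, which already supplies the crucial equalities $\oing(D)=\oing(B_D)$ and $\ohng(D)=\ohng(B_D)$; what remains is only to verify that the map $D\mapsto B_D$ is a legitimate reduction from the problem on arbitrary oriented graphs to the same problem restricted to oriented graphs with bipartite underlying graph. First I would check that the underlying graph of $B_D$ is bipartite: by the very definition of $B_D$, every arc is either of the form $(v^i,v^o)$ or of the form $(u^o,v^i)$, so each arc joins a vertex of $V^i=\{v^i:v\in V(D)\}$ to a vertex of $V^o=\{v^o:v\in V(D)\}$; hence $(V^i,V^o)$ is a bipartition of the underlying graph of $B_D$, and it is bipartite.

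Next I would observe that $B_D$ is constructible from $D$ in polynomial time, since it has exactly $2n(D)$ vertices and $n(D)+m(D)$ arcs, and that the map leaves the target value $k$ untouched. Together with the two equalities from Proposition~\ref{prop:equivalentBipartiteInstances}, this yields $\oing(D)\leq k$ if and only if $\oing(B_D)\leq k$, and likewise $\ohng(D)\leq k$ if and only if $\ohng(B_D)\leq k$. Consequently, an algorithm deciding the bipartite-restricted problem on the instance $(B_D,k)$ decides the general problem on $(D,k)$. Since the reduction runs in polynomial time and preserves the parameter $k$, it is simultaneously a Karp reduction and a parameterized reduction, so both $\NP$-hardness and $\W[i]$-hardness (for every $i$) transfer to the class of oriented graphs with bipartite underlying graph.

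The only point that I expect to require genuine care, rather than being automatic, is the hypothesis of Proposition~\ref{prop:equivalentBipartiteInstances} that $D$ be connected with $n(D)\geq 2$. I would argue that this is without loss of generality for a hardness statement: instances with $n(D)\leq 1$ are trivial, and one may always take the reduction to start from the connected instances (on at least two vertices) produced by whatever reduction witnesses the assumed hardness. Alternatively, one applies the construction componentwise, noting that the $B$-construction commutes with disjoint union and that both $\oing$ and $\ohng$ are additive over connected components, so the per-component equalities sum to the desired global equality. Once this hypothesis is handled, the corollary follows directly.
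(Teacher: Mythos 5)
Your proof is correct and follows essentially the same route as the paper: both invoke Proposition~\ref{prop:equivalentBipartiteInstances} and observe that $B_D$ has bipartite underlying graph (you via the explicit bipartition $(V^i,V^o)$, the paper via the doubling of cycle lengths), so that $D\mapsto B_D$ with $k$ unchanged is simultaneously a Karp and a parameterized reduction. Your extra attention to the hypothesis that $D$ be connected with $n(D)\geq 2$ --- handled by componentwise application and additivity of $\oing$ and $\ohng$ over weakly connected components --- is a valid point of care that the paper's one-line proof silently omits.
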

    \begin{proof}
    Note that $D_B$ is an oriented bipartite graph, for any oriented graph $D$, as each cycle doubles its length. Thus, Proposition~\ref{prop:equivalentBipartiteInstances} provides the required reduction.
    \end{proof}
    
    By~\cite{AA2021}, we already know that deciding whether $\oing(D)\leq k$ is $\W[2]$-hard parameterized by $k$, and that deciding whether $\ohng(D)\leq k$ is $\NP$-hard, even if the underlying graph of $D$ is bipartite. Thus, we believe that Corollary~\ref{cor:hardnessBipartite} might be useful to prove $\W$-hardness for the oriented hull number, or even to prove hardness results for subclasses of bipartite graphs.
    
    \subsection{Hardness of hull number in  two-path and distance-two convexities}
    \label{subsec:hardness-hull}
    
    \newcommand{\probohnp}{\pname{Parameterized oriented $\opc$-hull}\xspace}

        \paraProblemDef{\probohnp}{Oriented graph $D$ and positive integer $k$.}{$k$}{Is $\ohn_{P_3}(D)\leq k$?}{pro:ohnp}

         \begin{thm}
         \label{thm:npohnpbipartite}
         \nameref{pro:ohnp} is \W[2]-hard even if $D$ is acyclic and its underlying graph is bipartite.
        \end{thm}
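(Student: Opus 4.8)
The plan is to give a parameterized reduction from \textsc{Set Cover} (equivalently \textsc{Dominating Set} or \textsc{Hitting Set}), which is $\W[2]$-hard when parameterized by the size $k$ of the desired cover. Given a universe $U=\{u_1,\dots,u_n\}$ and a family $\mathcal{F}=\{F_1,\dots,F_m\}$, I would build an acyclic oriented graph $D$ whose underlying graph is bipartite, using only a constant number of forced source/sink vertices, a \emph{selector} vertex $s_j$ for each set $F_j$, and an \emph{element} gadget for each $u_i$. The whole construction is organized into a few layers (a source layer, the selectors, the elements, an auxiliary layer, and a sink layer) so that every arc runs between consecutive layers; this makes the underlying graph bipartite and the orientation acyclic for free, and keeps the parameter at $k+O(1)$.

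The heart of the design is to arrange the arcs so that two properties hold simultaneously. First, for each element $u_i$ the set $C_i$ of vertices ``responsible for covering $u_i$'' (derived from the selectors $s_j$ with $u_i\in F_j$) is $\opc$-coconvex; by Proposition~\ref{prop:coconvex_sets} every hull $\opc$-set must then contain a vertex of each $C_i$, which forces the selectors chosen by any hull set to form a set cover. Coconvexity is to be obtained by making these responsible vertices occur as internal vertices of directed paths of length two only along paths whose endpoints lie inside $C_i$, so that no outside-to-outside path on three vertices passes through them. Second, choosing the selectors of a set cover, together with the $O(1)$ forced vertices, must already be a hull set: a chosen selector $s_j$ captures, through directed length-two paths running to a forced sink, exactly the elements of $F_j$, and once every element is captured a dedicated \emph{filler} layer lets the iterative hull sweep up all remaining (non-chosen) selectors and auxiliary vertices, so that $\ohullp$ of the chosen set equals $V(D)$.

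With these two properties in hand, I would prove the equivalence $\ohnp(D)\le k+c \iff \mathcal{F}$ has a set cover of size at most $k$, where $c=O(1)$ is the number of forced vertices. The forward direction takes a cover, seeds its selectors, and invokes completeness; the backward direction takes a minimum hull set, replaces any non-selector choices by selectors (which the gadget structure guarantees are never more expensive), and reads off a cover from the coconvex constraints. Since $D$ is acyclic with bipartite underlying graph and the parameter is $k+c$, this yields the claimed $\W[2]$-hardness; the announced variant for $\ohng$ then follows from the same construction after making the forced-sink paths shortest.

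The main obstacle is engineering the gadgets so that the per-element covering constraints are genuinely $\opc$-coconvex --- so that the \emph{iterative} nature of the convex hull cannot circumvent a missing selection through an unintended cascade of length-two paths --- while at the same time guaranteeing completeness (that an honest cover really does propagate to all of $V(D)$) and keeping the number of forced vertices constant. This last point is essential: any source or sink must belong to every hull set by Proposition~\ref{prop:coconvex_sets}, so a number of forced vertices growing with $n$ or $m$ would inflate the parameter beyond any $f(k)$ and destroy the reduction.
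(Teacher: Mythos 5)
Your proposal has the right source problem and the right outer shell (a parameterized reduction from \textsc{Hitting Set}/\textsc{Set Cover}, a layered acyclic construction with bipartite underlying graph, and only $O(1)$ forced source/sink vertices, matching the paper's $x,x'$ and budget $k'+2$), but the core mechanism you propose --- making each per-element constraint set $C_i$ genuinely $\opc$-coconvex and invoking Proposition~\ref{prop:coconvex_sets} to force the cover --- is both left unconstructed and structurally problematic. If each set $F_j$ is represented by a single selector vertex $s_j$, then a non-chosen selector must eventually be swept up as the middle vertex of \emph{some} directed two-path with both endpoints already in the hull; coconvexity of every $C_i$ containing $s_j$ forces each such path to have an endpoint inside each of those $C_i$, and a path has only two endpoints, so the design breaks as soon as $s_j$ covers three elements whose selector sets share nothing but $s_j$ --- a situation the reduction must handle, since hardness must hold on all instances. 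If instead you duplicate selectors per element--set incidence to relieve this, the sets $C_i$ become pairwise disjoint, and coconvexity then forces any hull set to contain at least one vertex \emph{per element}, i.e.\ $|S|\geq n$, which destroys the parameter bound $k+O(1)$ that your own last paragraph correctly identifies as essential. So the two properties you ask for simultaneously (coconvex forcing and completeness within budget) pull against each other, and your admission that ``the main obstacle is engineering the gadgets'' concedes exactly the part that constitutes the proof.

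The paper resolves this tension by \emph{abandoning} coconvexity for the element gadgets. Its copies $u_i^1,\dots,u_i^{s(i)}$ (one per incidence) are deliberately generatable: the chain gadget $G_i^1$ propagates a single chosen copy $u_i^1$ to all copies (Claim~\ref{afirmacao_red1}), the set vertices then fall via $x'$ (Claim~\ref{afirmacao_red2}), and the feedback gadget $G_i^2$ regenerates $u_i^1$ for elements \emph{not} in the hitting set once all their set vertices are covered (Claim~\ref{afirmacao_red3}), which is how completeness stays within budget. Soundness is then argued not through coconvex obstructions but through a normalization step (any hull-set vertex inside a gadget can be replaced by $u_i^1$) followed by a generation-order argument: if some $W_l$ is unhit, then $u_i^j$ can only arise after $w_i^j$, which can only arise after $p_i^j$, which needs $w_i^j$ --- a circular dependency in the hull iteration, hence a contradiction. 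This temporal argument is exactly what your plan lacks a substitute for, and without it (or a concrete gadget realizing your coconvexity scheme, which the above suggests cannot exist in the required generality) the proposed reduction does not go through.
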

     \begin{proof}
         We modify a reduction done in~\cite{Nichterlein2013} from the \textsc{Parameterized hitting set} problem to show that \nameref{pro:ohnp} is is $W[2]$-hard in undirected bipartite graphs.   
        
        Given a set $U=\{u_1, \dots, u_n\}$, a family of subsets $\mathcal{W} = \{W_1, \dots, W_m\}$ over the elements of $U$ and an integer $k' \geq 0$, the \textsc{Parameterized hitting set} problem consists in deciding whether there is $U' \subseteq U$ such that $|U'| \leq k'$ and $U' \cap W_j \neq \emptyset$, for every $j \in [m]$. In the positive cases, we say that $U'$ is a \emph{hitting set} of $\mathcal{W}$. This problem is $W[2]$-hard with respect to the parameter $k'$ \cite{DF12}.
        
         We are going to construct an acyclic oriented graph $D_I = (V, A)$ from an instance $I = (\mathcal{W}, U, k')$ of \textsc{Parameterized hitting set} so that $\ohnp(D_i)\leq k=k'+2$ if and only if $I$ is a positive instance, that is, if there is a hitting set $U'\subseteq U$ of $\mathcal{W}$ with at most $k'$ elements. 
         
         Suppose that initially $V(D_I) = \emptyset$ and $A(D_I) = \emptyset$. 
         For each $W_j \in \mathcal{W}$, we add a vertex $w_j$ to $V(D_I)$, for $1 \leq j \leq m$. 
        
         For each $u_i \in U$, $1 \leq i \leq n$, let $s(i)$ be the number of sets of $\mathcal{W}$ which $u_i$ belongs, and let us call $w_i^1, \dots, w_i^{s(i)}$ the corresponding vertices of such sets (remark that we do not create new vertices to represent the sets of $\mathcal{W}$). We construct two gadgets $G_i^1$ and $G_i^2$ to represent $u_i$ and its relation with the sets of $\mathcal{W}$ as follows (see Figure~\ref{envoltoria_w[2]_dificil_DAG}). 

         \begin{figure}[h!]
            \centering
            \includegraphics[scale=0.4]{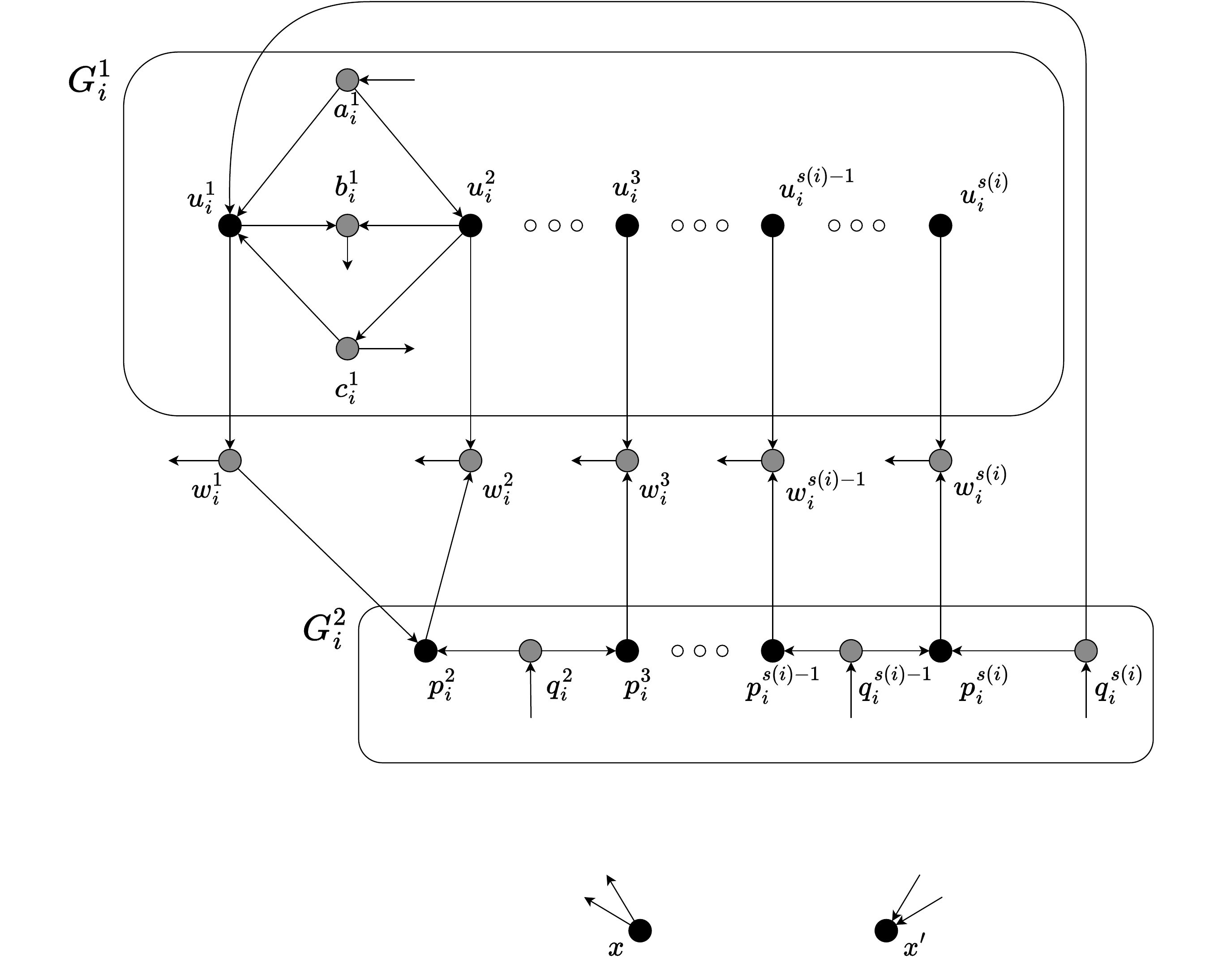}
            \caption{Gadget for $u_i \in U$.}
            \label{envoltoria_w[2]_dificil_DAG}
        \end{figure}
         
         For $G_i^1$, we add the vertices $u_i^1, \dots, u_i^{s(i)}$ to $V(D_I)$ and vertices $a_i^j, b_i^j, c_i^j$, $1 \leq j \leq s(i)-1$.  To conclude the construction of $G_i^1$, we add the arcs $(a_i^j,u_i^j)$,  $(a_i^j,u_i^{j+1})$, $(u_i^j,b_i^j)$,  $(u_i^{j+1},b_i^j)$, $(c_i^j,u_i^j)$, $(u_i^{j+1},c_i^j)$, for each $1 \leq j \leq s(i)-1$. 
        
         For $G_i^2$, we add the vertices $p_i^2, \dots, p_i^{s(i)}$ and $q_i^2, \dots, q_i^{s(i)}$ to $V(D_I)$, with the arcs $(q^j_i,p_i^j)$ for $2\leq j\leq s(i)$ and the arcs $(q^j_i, p^{j+1}_i)$ for $2\leq j\leq s(i)-1$.   
        We then connect $G_i^1$, $G_i^2$ and $w_i^1, \dots , w_i^{s(i)}$ by adding the arcs $(u_i^j,w_i^j)$, for $1 \leq j \leq s(i)$, $(p_i^j,w_i^j)$ for $2 \leq j \leq s(i)$ and the arcs $(w_i^1,p_i^2)$ and $(q_i^{s(i)},u_i^1)$ to $A(D_I)$. 
        
        Finally, we add the vertices $x,x'$ to $V(D_I)$, and arcs from:
    \begin{itemize}
        \item $b^j_i,c^j_i$ to $x'$, for each $1\leq i\leq n$ and $1\leq j\leq s(i)$;
        \item $w_j$ to $x'$, for each $1\leq j \leq m$;
        \item $x$ to $a_i^j$, for each $1\leq i\leq n$ and $1\leq j\leq s(i)-1$;
        \item $x$ to $q_i^j$, for each $1\leq i\leq n$ and $2\leq j\leq s(i)$. 
    \end{itemize}


        Note that, by construction, $D_I$ has no directed cycles and its underlying graph is bipartite (vertices $x$, $x'$, $u_i^j$ and $p_i^j$ belonging to one part and $a_i^j$, $b_i^j$, $b_i^j$, $w_i^j$ to the other. See Figure~\ref{envoltoria_w[2]_dificil_DAG}).         
        We now show that, given an instance $I$ of \textsc{Parameterized hitting set}, there is a hitting set $U'$ of $\mathcal{W}$ with size at most $k'$ if and only if $D_I$ has a hull set $S$ of size $k = k' + 2$. 

\begin{myclaim}\label{afirmacao_red1}
    Let $S\subseteq V(D_I)$ be such that $x,x'\in S$. If $u_i^1\in \ohullp(S)$, then $V(G_i^1)\subseteq \ohullp(S)$. 
\end{myclaim}
\begin{claimproof}
Let $l\geq 1$ be the smallest integer $l$ such that $u^1_i\in I^l(S)$. Since $x,x'\in S$, considering the adjacency of $a^1_i$ and $b^1_i$, such vertices are appear as internal vertices of paths of length two between $(u^1_i,x')$ or $(x,u^1_i)$, implying that $a^1_i,b^1_i\in I^{l+1}(S)$. Since $u^2_i$ is also an internal vertex of a $(a^1_i,b^1_i)$-path of length two, $u^2_i\in I^{l+2}(S)$. Finally, $c^1_i$ is an internal vertex of a path of length two between $u^1_i$ and $u^2_i$, so $c^1_i\in I^{l+2}(S)$. Repeating this argument for $u^j_i$, $2 \leq j \leq s(i)$, we conclude that every vertex of $G_i^1$ belongs to $\ohullp(S)$. 
\end{claimproof}

\begin{myclaim}\label{afirmacao_red2}
   Let $S\subseteq V(D_I)$ be such that $x'\in S$. If $V(G_i^1)\subseteq \ohullp(S)$, then $w_i^1,\dots,w_i^{s(i)}$ $\in \ohullp(S)$.
\end{myclaim}
\begin{claimproof}
    Observe that each $w^j_i$ is an internal vertex of a $(u^j_i,x')$-path of length two, for each $1\leq j\leq s(i)$.  So, $w_i^1,\dots,w_i^{s(i)}\in \ohullp(S)$.
\end{claimproof}

\begin{myclaim}\label{afirmacao_red3}
    Let $S\subseteq V(D_I)$ be such that $x\in S$. If $w_i^1,\dots,w_i^{s(i)}\in \ohullp(S)$, then $V(G_i^2)\subseteq \ohullp(S)$.
\end{myclaim}
\begin{claimproof}
    Since $w_i^1,w_i^2\in \ohullp(S)$, we have that $p_i^2\in \ohullp(S)$.
    Also, since $x,p_i^2\in \ohullp(S)$, we conclude that $q_i^2\in \ohullp(S)$. Similarly, as $q_i^2,w_i^3\in \ohullp(S)$, we have that $p_i^3\in \ohullp(S)$.
    Applying the above argument to $p_i^j,q_i^j\in \ohullp(S)$, for each $3\leq j\leq s(i)$ and, we conclude that $V(G_i^2)\subseteq \ohullp(S)$.
\end{claimproof}

$(\Rightarrow)$ Let $U'$ be a transversal of $\mathcal{W}$ with at most $k'$ elements. We are going to construct a hull set $S$ on the $\opc$-convexity  for $D_I$ with at most $k = k' + 2$ elements. We start by making $S=\{x,x'\}$, since $x$ and $x'$ are extremal vertices. For each $u_i\in U'$, we add the vertex $u_i^1$ to $S$. Then, $|S|\leq k'+2 = k$. We claim that $S$ is a hull set 
 for $D_I$. For each $u_i\in U'$, by Claim~\ref{afirmacao_red1}, we have that $V(G_i^1)\subseteq \ohullp(S)$. By Claim~\ref{afirmacao_red2}, we have that $w_i^j\in \ohullp(S)$, for each $u_i\in U'$ and for each $1\leq j\leq s(i)$. However, since $U'$ is a transversal of $\mathcal{W}$,  $w_j\in \ohullp(S)$, for each $1\leq j\leq m$. By Claim~\ref{afirmacao_red3}, we can verify that $V(G_i^2)\subseteq \ohullp(S)$, para todo $1\leq i\leq n$. In particular, $q_i^{s(i)} \in \ohullp(S)$, for every $1\leq i\leq n$.
As $q_i^{s(i)}, w_i^1\in \ohullp(S)$ even for $u_i$ such that $u_i\notin U'$, we conclude that $u_i^1$ corresponding to such elements are in $\ohullp(S)$, since $u^1_i$ is an internal vertex of a $(q_i^{s(i)},w_i^1)$-path of length two, for every $1\leq i\leq n$. Therefore $V(G^1_i)\subseteq \ohullp(S)$, for every $1\leq i\leq n$. Consequently, $S$ if indeed a hull set of $D_I$.

$(\Leftarrow)$ Now suppose there is a hull set $S$ on $\opc$ convexity for $D_I$ of size $|S|\leq k = k'+2$. Observe that $x',x$ are, respectively, a source and a sink in $D_I$. So, such vertices must be in $S$.

First, if $v\in V(G_i^1)\setminus\{u_i^1\}$ belongs to $S$, then $S' = (S\setminus\{v\})\cup\{u_i^1\}$ is also a hull set for $D_I$ by Claim~\ref{afirmacao_red1}. Therefore, we can suppose without loss of generality that $ u_i^1 \in S\cap V(G_i^1)$, for every $1\leq i\leq n$.

If $w_j$ belongs to $S$, for some $1\leq j\leq m$, then $S' = (S\setminus\{w_j\})\cup\{u_i^1\}$, for some $i\in\{1,\ldots, n\}$ such that $u_i\in W_j$, is also a hull be of $D_I$, by Claim~\ref{afirmacao_red2}. So, $S\cap \{w_1,\ldots, w_m\} = \emptyset$.

Finally, if $v\in V(G_i^2)$ belongs to $S$, for some $1\leq j\leq m$, note that $S' = (S\setminus\{v\})\cup\{u_i^1\}$ is also a hull set of $D_I$ by Claims~\ref{afirmacao_red1}, \ref{afirmacao_red2} and \ref{afirmacao_red3}. Therefore, $S\cap V(G_i^2) = \emptyset$, for every $1\leq i\leq n$. So, we have that $S'\setminus\{x,x'\}\subseteq \{u_i^1\mid 1\leq i\leq n\}$, without loss of generality.

So, if $S$ is a hull set of $D_I$, there is another hull set $S'$ of same size such that $S'\setminus\{x,x'\}\subseteq \{u_i^1\mid 1\leq i\leq n\}$.


We argue that the set $U' = \{u_i\in U\mid u_i^1\in S'\}$ is a \emph{transversal} of $\mathcal{W}$. Since $x'$ and $x$ belongs to $S$, note that $|U'|\leq k-2=k'$.


Suppose, by contradiction, that there is $W_l\in \mathcal{W}$ such that $U'\cap W_l=\emptyset$. This means that none of $u^j_i$, such that $w^j_i\in N^+(u^j_i)$, belongs to $S'$. Recall that there are no vertices $p^j_i$, $q^j_i$, $a^j_i$, $b^j_i$ or $c^j_i$ in $S'$. So, $u^j_i$ can only be generated from a path of length two containing $w^j_i$ as an end-vertex. 

Let $u^j_i$ be a vertex added to $\ohullp(S')$ and $w^j_i\in N^+(u^j_i)$, for all $1\leq j\leq s(i)$. Since $w^j_i$ was added to $\ohullp(S')$ before $u^j_i$ in $\oifp^p(S')$, for some $p\geq 1$. However, $w^j_i$ was included in $\ohullp(S')$ as a result of $p^j_i$ and $x'$, say in $\oifp^q(S')$. If $p^j_i$ was added to $\oifp^{q-1}(S')$ as a consequence of $w^j_i$ and $q^{j-1}_i$. Contradiction to the construction of $D_I$.
   \end{proof}
    
     Since bipartite graphs have no cycle on three vertices, the same holds for the $\opsc$ convexity.
    \newcommand{\probohnps}{\pname{Parameterized oriented $\opsc$-hull}\xspace}

        \paraProblemDef{\probohnps}{Oriented graph $D$ and positive integer $k$.}{$k$}{Is $\ohnps(D)\leq k$?}{pro:ohnps}

        \begin{cor}
        \label{cor:npohnpsbipartite}
        \nameref{pro:ohnps} is \W[2]-hard even if $D$ is acyclic and its underlying graph is bipartite.
    \end{cor}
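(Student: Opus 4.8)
The plan is to observe that the reduction constructed in the proof of Theorem~\ref{thm:npohnpbipartite} already establishes this corollary, once we verify that the two convexities coincide on the instances it produces. Recall that the oriented graph $D_I$ built there is acyclic and its underlying graph is bipartite, with the two sides of the bipartition explicitly described. So it suffices to argue that $\ohnp(D_I) = \ohnps(D_I)$, after which the entire equivalence of Theorem~\ref{thm:npohnpbipartite} transfers.

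The key observation is the following structural fact: in \emph{any} oriented graph $D$ whose underlying graph is bipartite, the interval functions $\oifp$ and $\oifps$ coincide. Indeed, let $P = (u,w,v)$ be any directed $(u,v)$-path of length two in $D$. Since the underlying graph is bipartite and $u,w$ are adjacent (hence on opposite sides of the bipartition), and likewise $w,v$, the endpoints $u$ and $v$ must lie on the same side. Hence there is no edge of the underlying graph between $u$ and $v$, so neither $(u,v)$ nor $(v,u)$ is an arc of $D$. Therefore $P$ is automatically a \emph{shortest} directed $(u,v)$-path, as no directed $(u,v)$-path of length one exists. Consequently every length-two directed path counted by $\oifp$ is also counted by $\oifps$; the reverse inclusion is immediate, so $\oifp(u,v) = \oifps(u,v)$ for all $u,v \in V(D)$.

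Since $\oifp = \oifps$ on $D_I$, the hull operators $\ohullp$ and $\ohullps$ agree on every subset of $V(D_I)$, whence $\ohnp(D_I) = \ohnps(D_I)$. Thus the equivalence proved in Theorem~\ref{thm:npohnpbipartite} --- that $\ohnp(D_I)\leq k'+2$ if and only if the instance $I$ of \textsc{Parameterized hitting set} is positive --- carries over verbatim to $\ohnps$, and the reduction remains parameter-preserving (the parameter is still $k = k'+2$). This yields the claimed \W[2]-hardness of \nameref{pro:ohnps} on acyclic oriented graphs with bipartite underlying graph.

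There is essentially no hard step here; the only point requiring care is the argument establishing $\oifp = \oifps$, namely that bipartiteness of the underlying graph forbids the chord $uv$ that would otherwise be needed to shorten a length-two path to length one. Once this structural fact is in hand, the corollary follows immediately from the previous theorem with no further modification to the construction.
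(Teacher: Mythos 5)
Your proposal is correct and matches the paper's own (very brief) argument: the paper justifies the corollary with the single observation that bipartite graphs have no cycle on three vertices, which is exactly your point that bipartiteness forbids the chord $uv$, so every directed path of length two is shortest and $\oifp=\oifps$ on the instances $D_I$ of Theorem~\ref{thm:npohnpbipartite}. Your write-up simply makes this one-line remark explicit, with no substantive difference in approach.
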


\begin{figure}[h!]
    \centering
    \includegraphics[scale=0.4]{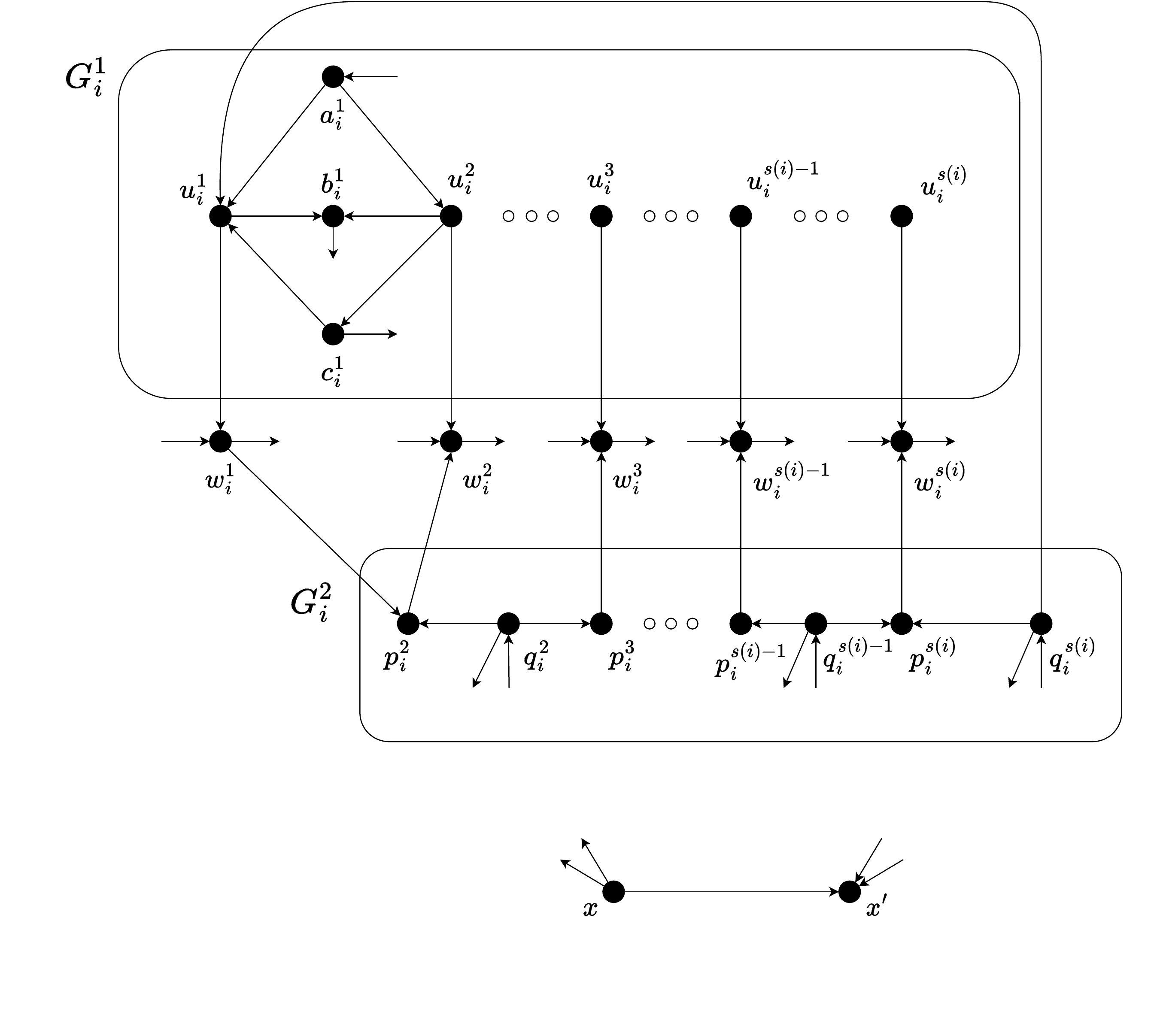}
      \caption{Gadget for $u_i\in U$.}
    \label{envoltoria_w[2]_dificil_g_DAG}
\end{figure}

Making a small change to the construction shown in Figure \ref{envoltoria_w[2]_dificil_g_DAG}, we were able to show that the problem of given an integer $k$ and an oriented acyclic graph, to determine if $\ohng\leq k$ is $W[2]$-hard.

\paraProblemDef{PARAMETERIZED ORIENTED $\ogc$-HULL}{Oriented graph $D$ and positive integer $k$.}{$k$}{Is $\ohng(D)\leq k$?}{pro:ohng}

\begin{cor}\label{corohngdag}
     \nameref{pro:ohng} is $W[2]$-hard even if $D$ is acyclic. 
\end{cor}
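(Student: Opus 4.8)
The plan is to reuse, essentially verbatim, the reduction from \textsc{Parameterized hitting set} built in the proof of Theorem~\ref{thm:npohnpbipartite}, keeping the budget $k=k'+2$, and to argue that with the small modification of the gadget depicted in Figure~\ref{envoltoria_w[2]_dificil_g_DAG} the same correspondence between hitting sets of $\mathcal{W}$ and hull sets of $D_I$ survives for the geodetic convexity. The point of departure is that $\oifg$ only counts \emph{shortest} directed paths, so first I would check that every length-two path invoked in Claims~\ref{afirmacao_red1}--\ref{afirmacao_red3} is a geodesic. Since $D_I$ is acyclic with bipartite underlying graph, the two endpoints of each such path lie in the same part, hence are non-adjacent, so their directed distance is exactly two; each of those length-two paths is therefore a shortest directed path and Claims~\ref{afirmacao_red1}--\ref{afirmacao_red3} transfer literally with $\oifg$ and $\ohullg$ in place of $\oifp$ and $\ohullp$.

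The genuine obstacle is that, unlike $\oifp$ and $\oifps$, the geodetic interval function \emph{also} inserts into the hull every internal vertex of a \emph{longer} geodesic. In the unmodified construction the source $x$ and the sink $x'$ are joined, inside each gadget $G_i^1$, by directed paths of length four such as $x\to a_i^j\to u_i^j\to b_i^j\to x'$ (and the analogous paths through $u_i^{j+1}$ and $c_i^j$), and a short case analysis gives $d_{D_I}(x,x')=4$; hence these are geodesics and $\{x,x'\}$ alone would force the whole interior of every $G_i^1$ into $\ohullg(\{x,x'\})$ --- in particular every choice vertex $u_i^1$ --- collapsing $\ohng(D_I)$ to $2$ and destroying the reduction. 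The small change is meant precisely to kill these long source--sink geodesics: I would add a short bypass (a fresh vertex $y$ with arcs $x\to y\to x'$) so that $d_{D_I}(x,x')=2$, which makes every gadget-spanning $x$--$x'$ path non-shortest. The bypass vertex is itself generated by $\{x,x'\}$ and so costs nothing in the budget, and it does not shorten $d(x,u_i^1)$ or $d(u_i^1,x')$, both of which remain equal to two, so the intended length-two generations are untouched.

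With the modification in place I would re-run both directions. For $(\Rightarrow)$, given a hitting set $U'$ of size at most $k'$, the set $S=\{x,x'\}\cup\{u_i^1: u_i\in U'\}$ has size at most $k'+2$ and, by the transferred claims together with the cascade already used in the proof of Theorem~\ref{thm:npohnpbipartite}, generates all of $D_I$ through length-two geodesics only, so the forward direction is unchanged. For $(\Leftarrow)$, $x$ and $x'$ are forced into every hull set by Proposition~\ref{prop:coconvex_sets}, and the normalization step (swapping any gadget vertex for the corresponding $u_i^1$) still applies because it rests only on the length-two claims; setting $U'=\{u_i:u_i^1\in S\}$, of size at most $k'$, I would show $U'$ is a transversal by reproducing the deadlock argument: if some $W_l$ were unhit, then generating the set vertex $w_l$ would require some $p_i^j$, generating $p_i^j$ would in turn require $w_l$ (or $q_i^{j-1}$), and so on, giving the same circular dependency that prevents $w_l$ from ever entering $\ohullg(S)$.

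The hard part will be verifying that the bypass --- and, more generally, the chosen distances in $D_I$ --- do not open any \emph{new} geodesic that short-circuits this deadlock; that is, I must audit the pairwise directed distances to confirm that the only geodesics of length at least two are the intended length-two paths, so that in the geodetic convexity propagation into each gadget and to each $w_l$ still passes through some chosen $u_i^1$ with $u_i\in W_l$. Once this distance audit is complete, the map $I\mapsto (D_I,k'+2)$ is a polynomial-time parameterized reduction preserving the parameter, and the \W[2]-hardness of \nameref{pro:ohng} on acyclic oriented graphs follows from that of \textsc{Parameterized hitting set}.
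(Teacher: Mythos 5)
Your high-level strategy coincides with the paper's (reuse the reduction of Theorem~\ref{thm:npohnpbipartite} and add shortcuts so that the unintended long geodesics stop being shortest), but your concrete fix --- a single bypass $x\to y\to x'$ --- is not the paper's modification, and it does not survive the ``distance audit'' you defer to the end. The pair $(x,x')$ is not the only source of harmful geodesics. In your modified $D_I$ we still have $d(x,w_j)=3$ for every set vertex $w_j$: the out-neighbors of $x$ are the $a_i^{j}$'s, the $q_i^{j}$'s and $y$, none of which has an arc into $w_j$, so the $(x,w_j)$-geodesics are exactly the paths $x\to a_i^{j'}\to u_i^{j'}\to w_j$ and $x\to q_i^{j'}\to p_i^{j'}\to w_j$, ranging over \emph{every} element $u_i\in W_j$. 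Hence as soon as any set vertex $w_j$ enters $\ohullg(S)$ --- which happens for every hit set via the length-two geodesic $u_h^{j}\to w_j\to x'$ --- all vertices $u_i^{j'}$ with $u_i\in W_j$ enter as internal vertices of $(x,w_j)$-geodesics, including those of elements $u_i\notin U'$. From such a $u_i^{j'}$ the $G_i^1$ cascade (which, as you correctly note, transfers to the geodetic convexity) generates all of $G_i^1$ and therefore all of $w_i^1,\dots,w_i^{s(i)}$; likewise $q_i^j\to p_i^j\to w_i^j\to x'$ remains a $(q_i^j,x')$-geodesic of length three. Generation thus propagates along the element--set incidence structure, so for instances with connected incidence one gets $\ohullg(\{x,x',u_h^1\})=V(D_I)$, i.e.\ $\ohng(D_I)\le 3$ regardless of the minimum hitting-set size. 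The backward direction collapses, and your deadlock argument is false: the circular dependency through $q_i^{s(i)}$ and $w_i^1$ is short-circuited by the $(x,w_j)$-geodesics, which never pass through $q_i^{s(i)}$ at all.

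This is precisely why the paper's modification is not one bypass but three families of arcs: $(x,x')$, $(q_i^j,x')$ for every $j\in\{2,\dots,s(i)\}$, and $(x,w_i^j)$ for every $j\in\{1,\dots,s(i)\}$. Each problematic pair is made \emph{adjacent}, so every path listed above has length strictly greater than the distance between its endpoints and contributes nothing to $\oifg$. Note the price paid: these arcs destroy bipartiteness of the underlying graph, which is why Corollary~\ref{corohngdag} claims only acyclicity --- your bypass, had it worked, would have preserved bipartiteness and proved a strictly stronger statement, a warning sign in itself. So the proposal identifies the right obstacle (longer geodesics between $x$ and $x'$) but fixes only that one pair; the audit you postpone is where the actual content of the proof lies, and for your construction it fails.
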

\begin{proof}
    The proof is analogous to the one presented in the Theorem~\ref{thm:npohnpbipartite}, thus we omit the details. See Figure~\ref{envoltoria_w[2]_dificil_g_DAG} to observe the minor changes in the construction: an arc from $x$ to $x'$, arcs from $q_i^j$ to $x'$, for every $j\in\{2,\dots, s(i)\}$ and from $x$ to $w_i^j$, for every $j\in \{1,\dots, s(i)\}$.
\end{proof}

    \subsection{Hardness of interval number in two-path and distance-two convexities}
    \label{subsec:hardness-interval}

    A graph $G$ is \emph{chordal bipartite} if $G$ is bipartite and any chordless cycle of $G$ has length four. Note that, although the name is misleading, this class of graphs is not a subclass of chordal graphs.

    A \emph{dominating set} $S\subseteq V(G)$ in a graph $G$ satisfies that each $v\in V(G)\setminus S$ has a neighbor in $S$. The \emph{domination number} of $G$, denoted by $\gamma(G)$, is the cardinality of a minimum dominating set of $G$.

\problemDef{\pname{Dominating Set}}{Graph $G$ and positive integer $k$.}{Is $\gamma(G)\leq k$?}{pro:DS}

We know that the \nameref{pro:DS} problem is NP-complete for chordal bipartite graphs~\cite{MB87}. We reduce such problem to \nameref{pro:oinp}.

\problemDef{\pname{Oriented $\opc$-interval}}{Oriented graph $D$ and positive integer $k$.}{Is $\oinp(D)\leq k$?}{pro:oinp}

\begin{thm}
\label{prop:oinp}
 \nameref{pro:oinp} is \NP-complete, even if the input oriented graph $D$ is a DAG and its underlying graph is chordal bipartite.
\end{thm}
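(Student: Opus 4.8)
The plan is to reduce from \pname{Dominating Set} on chordal bipartite graphs, which is \NP-complete by~\cite{MB87}; since membership in \NP\ is clear (guess $S$, compute $\oifp(S)$ in polynomial time and test $\oifp(S)=V(D)$), this yields \NP-completeness. Given a chordal bipartite graph $G$ with a fixed bipartition $V(G)=X\cup Y$ and an integer $k$, I would build an oriented graph $D$ with $\oinp(D)=\gamma(G)+2$, so that $\gamma(G)\le k$ if and only if $\oinp(D)\le k+2$.

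The construction keeps $V(G)$ and orients every edge $xy\in E(G)$ from its endpoint $x\in X$ to its endpoint $y\in Y$; I then add a new vertex $s$ with arcs $(s,x)$ for all $x\in X$, a new vertex $t$ with arcs $(y,t)$ for all $y\in Y$, and, crucially, the single arc $(s,t)$. The ordering $s\prec X\prec Y\prec t$ shows that $D$ is acyclic. Moreover $s$ is a source and $t$ is a sink, so by Proposition~\ref{prop:coconvex_sets} both lie in every interval $\opc$-set.

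For correctness I would characterize the interval sets. A vertex $x\in X\setminus S$ has $s$ as its only in-neighbour, and its out-neighbours are exactly its $G$-neighbours in $Y$; since $s\in S$ is forced, $x\in\oifp(S)$ iff some $Y$-neighbour of $x$ lies in $S$. Symmetrically, a vertex $y\in Y\setminus S$ has $t$ as its only out-neighbour and its $G$-neighbours in $X$ as in-neighbours, so $y\in\oifp(S)$ iff some $X$-neighbour of $y$ lies in $S$. Hence a set $S\supseteq\{s,t\}$ is an interval $\opc$-set iff every vertex of $G$ outside $S$ has a $G$-neighbour in $S$, i.e. iff $S\cap V(G)$ is a dominating set of $G$; adding the two forced vertices gives $\oinp(D)=\gamma(G)+2$. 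Because the underlying graph is triangle-free, every directed path of length two is automatically shortest, so the same reduction works verbatim for $\oinps$, yielding the corresponding statement for the $\opsc$ convexity.

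The main obstacle is proving that the underlying graph of $D$ is chordal bipartite, and this is exactly where the extra arc $(s,t)$ is needed. Bipartiteness is immediate from the classes $A=X\cup\{t\}$ and $B=Y\cup\{s\}$. For the chord condition I would show that every cycle $C$ of length at least six has a chord, by cases on $C\cap\{s,t\}$: if $C$ avoids both $s$ and $t$ it is a cycle of $G$ and has a chord by hypothesis; if $C$ contains $s$ but not $t$, then all $\ge 3$ vertices of $C$ on side $A$ lie in $X$, so some $X$-vertex of $C$ is non-consecutive to $s$ and, as $s$ sees all of $X$, produces a chord $sx$ (symmetrically for $t$); and if $C$ contains both $s$ and $t$, either they are non-consecutive on $C$ and the arc $(s,t)$ is a chord, or they are consecutive, whence the remaining $\ge 2$ vertices of $X\cap C$ again force a chord at $s$. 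Without the arc $(s,t)$ this last case fails---for instance, when $G$ is a perfect matching the cycle $s,x_1,y_1,t,y_2,x_2,s$ is a chordless $6$-cycle---so inserting $(s,t)$ is the key step that preserves chordal bipartiteness while affecting neither the acyclicity nor the interval-set analysis.
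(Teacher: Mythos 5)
Your proof is correct, but it takes a genuinely different route from the paper's. The paper also reduces from \pname{Dominating Set} on chordal bipartite graphs, but instead of two universal vertices it attaches to \emph{every} $v\in V(G)$ a private pendant vertex $z_v$, oriented $(z_v,v)$ for $v$ in one side of the bipartition and $(v,z_v)$ for the other; all $n$ pendants are sources or sinks, hence forced into any interval $\opc$-set by Proposition~\ref{prop:coconvex_sets}, and the equivalence becomes $\gamma(G)\le k$ iff $\oinp(D)\le k+n$. That choice makes the structural verification trivial --- pendant vertices of degree one cannot create cycles, so chordal bipartiteness and acyclicity are immediate --- at the cost of an additive $n$ in the target size. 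Your construction pays instead with the case analysis showing that every cycle of length at least six through $s$ or $t$ has a chord, where the extra arc $(s,t)$ is exactly the right fix (your perfect-matching counterexample correctly shows it is necessary), and it buys a sharper conclusion: the exact formula $\oinp(D)=\gamma(G)+2$ makes the reduction parameter-preserving, so if \pname{Dominating Set} were known to be \W[2]-hard on chordal bipartite graphs, your reduction would immediately yield \W[2]-hardness of \nameref{pro:oinp} on this class, which the paper's $+n$ blow-up cannot deliver (the paper proves its parameterized hardness separately, on split underlying graphs). Both arguments transfer to $\opsc$ identically, via the observation that in a bipartite underlying graph every directed two-path is shortest. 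One small point worth a sentence in your write-up: an isolated vertex $x$ of $G$ becomes a sink (or source) in your $D$, since its only neighbour is $s$ (or $t$), so it is forced into every interval set, matching the fact that it is forced into every dominating set --- the equivalence survives, but the claim that $x\in\oifp(S)$ iff some $G$-neighbour of $x$ lies in $S$ silently assumes such a neighbour exists.
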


\begin{proof}
The proof is similar to~\cite{CDS09}. It is known that the \nameref{pro:DS} problem is \NP-complete even if the input graph is bipartite chordal~\cite{MB87}.

Note that \nameref{pro:oinp} problem belongs to \NP~because given as certificate a subset of vertices $S$ of an oriented chordal bipartite graph $D$, it is possible to determine in linear time if it has the appropriate size and if every vertex $v \in V(G) \setminus S$ there are arcs $(v_1,v)$ and $(v,v_2)$ for some $v_1,v_2\in S$.

Let $I=(G,k)$ be an arbitrary instance of \nameref{pro:DS} such that $G=(A\cup B,E(G))$ is chordal bipartite with $n$ vertices.

Let us build an instance $(D, k+n)$ of \nameref{pro:oinp} as follows. First add to $D$ all vertices and edges of $G$ and orient all edges from $A$ to $B$. Now, for each $v \in V(G)$ we create in $D$ an auxiliary vertex $z_v$ and add the edge $vz_v$. If $v\in A$ orient the arc $(z_v,v)$ and if $ v\in B$ then orient $(v,z_v)$. Let $Z = \{z_v\mid v\in V(D)\}$.

Note that the underlying graph of $D$ is bipartite chordal graph as the addition of vertices of degree one does not create cycles. Moreover, observe that $D$ has no directed cycles and thus it is a DAG.
Let us prove that $\gamma(G)\leq k$ if, and only if, $\oinp(D)\leq k+n$.

Suppose that $S$ is a dominating set of $G$ with at most $k$ vertices. We claim that $U=S\cup Z$ is an interval $\opc$-set of $D$. Since $|Z| = n$ and $|S|\leq k$, we then have $\oinp(D)\leq k+n$. Let $v\in V(D)$ such that $v\notin U$. Since $Z\subseteq U$, note that $v\in A\cup B$. Since $S$ is a dominating set, there exists $x\in S$ such that $vx\in E(G)$. If $v \in A$, then $(z_v,v),(v,x)\in A(D)$ and $z_v,x\in U$. Thus $v\in \oifp(U)$. Similarly, if $v \in B$, there are arcs $(x,v),(v,z_v)\in A(D)$ such that $x,z_v\in U$ implying that $v\in \oifp(U)$.

Reciprocally, let $U$ be a interval $\opc$-set with cardinality at most $n+k$. By Proposition~\ref{prop:coconvex_sets}, note that $Z\subseteq U$. Consequently, $S = U\setminus Z \subseteq A\cup B$ has at most $k$ vertices. Since $U$ is an interval $\opc$-set, note that every vertex $v\in A\setminus U$ must have an out-neighbor $x\in B\cap U$. Similarly, every vertex $v\in B\setminus U$ must have an in-neighbor $x\in A\cap U$. Consequently, $S$ is a dominating set of $G$.
\end{proof}

\problemDef{\pname{Oriented $\opsc$-interval}}{Oriented graph $D$ and positive integer $k$.}{Is $\oinps(D)\leq k$?}{pro:oinps}

\begin{cor}
\label{cor:oinps_chordal_bip_np}
 \nameref{pro:oinps} is \NP-complete, even if the input oriented graph $D$ is a DAG and its underlying graph is chordal bipartite.
\end{cor}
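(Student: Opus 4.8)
The plan is to reuse the reduction from Theorem~\ref{prop:oinp} essentially verbatim, observing that the oriented graph $D$ produced there is already a DAG whose underlying graph is chordal bipartite. The only thing left to argue is that, on this kind of instance, the $\opsc$-convexity and the $\opc$-convexity coincide, so that $\oinps(D) = \oinp(D)$ and the established equivalence $\gamma(G) \leq k \iff \oinp(D) \leq k+n$ transfers directly to $\oinps(D) \leq k+n$.

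The key step is the following observation. Since the underlying graph of $D$ is bipartite, it is triangle-free. Hence, whenever there is a directed $(u,v)$-path of length two through some vertex $w$, the endpoints $u$ and $v$ lie in the same part of the bipartition (being at distance two in the underlying graph), so there can be no edge $uv$ in the underlying graph and thus no arc $(u,v)$ in $D$. Consequently every directed $(u,v)$-path of length two is automatically a \emph{shortest} directed $(u,v)$-path of length two, which means $\oifp(\{u,v\}) = \oifps(\{u,v\})$ for every pair of vertices. Therefore the two interval functions are identical on $D$, and in particular $\oinps(D) = \oinp(D)$.

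Putting this together, I would first note that \nameref{pro:oinps} lies in \NP~by the same certificate argument as in Theorem~\ref{prop:oinp}: given $S \subseteq V(D)$ one checks in linear time that $|S|$ is small enough and that each $v \notin S$ has an in-neighbor and an out-neighbor in $S$ with no shortcut arc, which on a triangle-free underlying graph is exactly the $\opsc$-condition. Then, applying the reduction of Theorem~\ref{prop:oinp} to an instance $(G,k)$ of \nameref{pro:DS} with $G$ chordal bipartite yields a DAG $D$ with chordal bipartite underlying graph such that $\gamma(G) \leq k \iff \oinp(D) \leq k+n \iff \oinps(D) \leq k+n$, where the last equivalence is the coincidence of the two convexities established above. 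This gives the desired hardness.

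There is no substantial obstacle here: the entire content is the triangle-freeness argument collapsing the two convexities, and the rest is inherited from Theorem~\ref{prop:oinp}. The one point requiring care is making explicit \emph{why} bipartiteness forbids a shortcut arc $(u,v)$ parallel to a length-two path, since it is precisely the absence of such an arc that distinguishes $\oifps$ from $\oifp$ in general; I would state this as the crux of the proof and keep the remaining verification brief.
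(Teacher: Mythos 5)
Your proposal is correct and matches the paper's intended argument exactly: the paper states this corollary without further proof precisely because, as you observe, the digraph built in Theorem~\ref{prop:oinp} has a bipartite (hence triangle-free) underlying graph, so no shortcut arc $(u,v)$ can parallel a directed two-path and the $\opc$- and $\opsc$-interval functions coincide on it (the same observation the paper makes explicitly for Corollaries~\ref{cor:npohnpsbipartite} and~\ref{cor:oinps_split_w_hard}). Nothing is missing; your write-up simply makes explicit what the paper leaves implicit.
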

    
In~\cite{AA2021}, the authors prove that deciding whether $\oing(D)\leq k$ is \NP-hard even for split graphs. Computing $\inp(G)$ is $\NP$-hard for a chordal $G$~\cite{CDS09}.

\paraProblemDef{\pname{Parameterized Set Cover}}{Set $U, \mathcal{F}\subseteq\mathcal{P}(U)$ and $k\in\mathbb{Z}_+^*.$}{$k$}{Does there exist $\mathcal{F'}\subseteq\mathcal{F}$ s.t. $\bigcup \mathcal{F'}=U$ and $|\mathcal{F'}|\leq  k$?}{pro:SetCover}

\newcommand{\proboinpparam}{\pname{Parameterized oriented $\opc$-interval}\xspace}

\paraProblemDef{\proboinpparam}{Oriented graph $D$ and positive integer $k$.}{$k$}{Is $\oinp(D)\leq k$?}{pro:oinpparam}

\begin{prop}
\label{prop:oinps_w-hard_split}
 \nameref{pro:oinpparam} is \W[2]-hard, even if $D$ has an underlying split graph.
\end{prop}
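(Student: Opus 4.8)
The plan is to give an FPT-reduction from \nameref{pro:SetCover}, which is \W[2]-hard parameterized by $k$. Given an instance $(U,\mathcal{F},k)$ with $U=\{u_1,\dots,u_n\}$ and $\mathcal{F}=\{F_1,\dots,F_m\}$ (assume every element lies in at least one set), I would build an oriented graph $D$ whose underlying graph is split. The clique $C$ receives one vertex $f_j$ per set and one vertex $e_\ell$ per element; the independent set $I$ receives only a \emph{constant} number of auxiliary source/sink vertices, say a source $\sigma_e$ joined to the $e_\ell$'s, a source $\sigma_f$ joined to the $f_j$'s, and a sink $\tau_f$ joined to the $f_j$'s, where each auxiliary vertex is attached to only one side and its edges to the opposite side are simply \emph{omitted}. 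Inside $C$ the elements are oriented transitively, the sets are oriented transitively, an incidence $u_\ell\in F_j$ is oriented $e_\ell\to f_j$, and a non-incidence $u_\ell\notin F_j$ is oriented $f_j\to e_\ell$. The parameter is set to $k'=k+3$, which is bounded by a function of $k$, so the map is a genuine FPT-reduction.

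The crucial design principle is that every covering two-path I rely on has its two endpoints \emph{non-adjacent}: either both lie in $I$, or one lies in $I$ and its clique edge to the other endpoint has been omitted. Hence each such length-two path is automatically a \emph{shortest} path (there is no arc between its endpoints), so it is a witness not only in the $\opc$-convexity but also in the $\opsc$-convexity; this is exactly what makes the reduction valid for the distance-two parameter $\oinps$. For the forward direction, given a set cover $\mathcal{F}'$ of size $k$ I would take $S=\{\sigma_e,\sigma_f,\tau_f\}\cup\{f_j:F_j\in\mathcal{F}'\}$. Each unchosen $f_j$ is swallowed for free by the shortest two-path $\sigma_f\to f_j\to\tau_f$ (its endpoints both lie in $I$), and each element $e_\ell$ is produced by the shortest two-path $\sigma_e\to e_\ell\to f_j$ where $f_j\in S$ is a chosen set containing $u_\ell$ (here $\sigma_e$ is not adjacent to any set, so there is no chord). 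Thus $\oifps(S)$ eventually reaches $V(D)$ and $S$ is an interval $\opsc$-set of size $k+3$.

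The backward direction is where the real work lies, and it is the step I expect to be the main obstacle. By Proposition~\ref{prop:coconvex_sets} the three sources/sinks are forced into every interval set, so from an interval $\opsc$-set of size $k+3$ one extracts a remainder $S'\subseteq C$ of size at most $k$, from which reading off the chosen sets should yield a set cover. The danger is \emph{spurious} coverage: I must guarantee that an element vertex $e_\ell$ can be produced only when a genuinely containing set is in $S$, that is, that no shortest two-path covering $e_\ell$ has both endpoints avoiding the containing sets of $u_\ell$. This is precisely where the transitive orientations are meant to help: any ``mixed'' two-path that would let a non-containing set, or another element placed in $S'$, cover $e_\ell$ should acquire a chord between its endpoints (through $\sigma_e$, through the transitive order on the elements, or through the set order) and therefore fail to be shortest, so it does not count in the $\opsc$-convexity; note that this chord mechanism is exactly what distinguishes $\opsc$ from $\opc$, since for the general two-path convexity such chords would not disqualify the offending paths. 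The delicate point, which I would treat as the heart of the proof, is to make this chord-creation \emph{exhaustive}: one must verify that every potential shortest two-path through an element whose endpoints are not containing sets indeed has a chord, and, should residual configurations survive, supplement the argument with a normalization/exchange step that removes element-vertices from a minimum solution in favour of incident set-vertices without increasing its size. Carrying this out while keeping the forced part of constant size (so that \W[2]-hardness is preserved) is the crux; once it is settled, the underlying graph is split by construction, and since all witnessing paths are shortest the statement follows for $\oinps$ (and likewise for $\oinp$).
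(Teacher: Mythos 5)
Your reduction has the right skeleton (from set cover, split underlying graph, a constant number of forced sources/sinks, all \emph{witness} paths shortest so that the result transfers to $\opsc$), but the mechanism you rely on to block spurious coverage is chord-based, and chords are irrelevant for the problem actually being proved: \nameref{pro:oinpparam} concerns $\oinp$, the $\opc$-convexity, in which a directed two-path covers its middle vertex \emph{whether or not} there is an arc between its endpoints. In your gadget this is fatal, not merely delicate. Take $S=\{\sigma_e,\sigma_f,\tau_f,e_n\}$, where $e_n$ is the last element in your transitive order on the $e_\ell$'s: every set vertex lies on $\sigma_f\to f_j\to\tau_f$, and every element $e_\ell$ with $\ell<n$ lies on the two-path $\sigma_e\to e_\ell\to e_n$ (the chord $\sigma_e\to e_n$ disqualifies this path only in $\opsc$). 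Hence $\oinp(D)\leq 4$ for \emph{every} instance, the backward direction collapses, and your map is not a reduction for $\opc$. Moreover, even for $\opsc$ your chord creation is not exhaustive, confirming your own worry: if $u_\ell\notin F_j$, $\ell<\ell'$ and $u_{\ell'}\in F_j$, then $f_j\to e_\ell\to e_{\ell'}$ is a \emph{shortest} two-path (the only arc between its endpoints is $e_{\ell'}\to f_j$), so a non-containing set together with a later element spuriously covers $e_\ell$; the normalization you defer to would have to resolve such configurations and their cascades, so the proof is incomplete there as well.

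The paper avoids both problems with a structural, degree-based forcing instead of a shortest-path one: the element vertices are placed in the \emph{independent} set, not in the clique. Its clique is $\{v\}\cup\{f_1,\ldots,f_m\}$ (a transitive tournament on the $f_i$'s plus arcs $(v,f_i)$), its independent set is $\{w\}\cup\{u_1,\ldots,u_n\}$, with arcs $(f_i,u_j)$ exactly when $j\in F_i$, $(u_j,v)$ for all $j$, and $(f_i,w)$ for all $i$. Each element then has out-neighborhood $\{v\}$ and in-neighborhood exactly its containing sets, so \emph{every} two-path through an uncovered element is of the form $f_i\to u_j\to v$ with $j\in F_i$ --- no chord argument is needed, and the backward direction (forcing $w$, then $v$, into $S$ by Proposition~\ref{prop:coconvex_sets} and the assumption $|U|\geq k+2$, then exchanging any element vertices of $S$ for containing sets) works directly in $\opc$. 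Since in addition none of the witness paths $v\to f_i\to w$ and $f_i\to u_j\to v$ has an arc between its endpoints, the $\opsc$ statement follows as an immediate corollary, which is exactly the transfer you were aiming for. To salvage your construction you would have to trivialize one side of each element's neighborhood in the same way (e.g., move the $e_\ell$'s out of the clique, dropping the transitive order among them), at which point it essentially becomes the paper's gadget.
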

\begin{proof}
It is well known that \nameref{pro:SetCover} is $\W[2]$-hard~\cite{Cyganetal2015}.
We reduce the \nameref{pro:SetCover} to  \nameref{pro:oinpparam}. Let~$\mathcal{I}=(U, \mathcal{F}, k)$ be an input  to \nameref{pro:SetCover} such that $U=\{1,2,\dots,n\}$ and $\mathcal{F}=\{F_1,F_2,\dots,F_m\}$. 

First note that each element in $U$ must belong to at least one subset $F_i\in \mathcal{F}$ as otherwise the instance is trivially a NO-instance. Thus, we assume $\cup_{F\in \mathcal{F}}F=U$.

Consequently, if $|U|\leq k$, then the instance is trivially a YES-instance as we may arbitrarily choose exactly one element of $\mathcal{F}$ for each element in $U$. Similarly, if $|U|=k+1$, one can easily answer in linear time whether $(U,\mathcal{F},k)$ is a YES-instance to \nameref{pro:SetCover} by just checking whether there is an element in $\mathcal{F}$ containing at least two elements in $U$. Thus, also assume that $|U|\geq k+2$.

We will construct an oriented graph $D$ such that $(\mathcal{U},\mathcal{F},k)$ is a YES-instance if and only if $\oinp(D)\leq k+2$. 

In $D$ we have as vertex set $$V(D) = \{u_j\mid j\in \{1,\ldots, n\}\}\cup \{f_i\mid i\in \{1,\ldots, m\}\}\cup \{v,w\}.$$
Let us now define the arc set of $D$. First we add the arcs $(f_i,u_j)$ if $j\in F_i$, for every $j\in\{1,\ldots, n\}$ and $i\in\{1,\ldots, m\}$. Then, we add the arcs $(f_i,w)$ and $(v,f_i)$, for every $i\in \{1,\dots,m\}$. We construct a transitive tournament induced by $\{f_i\mid i\in\{1,\ldots,m\}\}$ by adding the arcs $(f_i,f_{i'})$ whenever $1\leq i<i'\leq m$. Finally, we include the arcs $(u_j,v)$ for every $j\in \{1,\dots, n\}$.

Note that by construction $C=\{v\}\cup \{f_i\mid i\in \{1,\ldots,m\}\}$ is a clique and $S=\{w\}\cup \{u_j\mid j\in\{1,\ldots,n\}\}$ is a stable set, therefore the underlying graph of $D$ is split. 

Let us now prove that $\mathcal{I}$ is an YES-instance if and only if $\oinp(D)\leq k+2$.

Suppose first that $\mathcal{I}$ is an YES-instance and let $\mathcal{F'}\subseteq \mathcal{F}$ be such that $\mathcal{F'}=\{F_i\mid i\in I\}$, $\bigcup \mathcal{F'} = U$ and $|I|\leq k$. Let $S\subseteq V(D)$ be such that $S = \{v,w\}\cup \{f_i\mid i\in I\}$. We claim that $S$ is an interval set for $D$ in the $\opc$-convexity. Indeed, note that all vertices in $\{f_i\mid i\in\{1,\ldots,m\}\}$ belong to a directed $(v,w)$-path on three vertices in $D$. Moreover, since $\mathcal{F'}=\{F_i\mid i\in I\}$ covers all elements in $U$, note that, for every $u\in\{u_j\mid \{1,\ldots,n\}\}$, $u$ belongs to a directed $(f_i,v)$-path for some $f_i\in S$ as there is $F_i\in \mathcal{F}$ such that $u\in F_i$.

Suppose now that $S$ is an interval set for $D$ in the $\opc$-convexity such that $|S|\leq k+2$.
Since $w$ is a sink, it belongs to every interval set in $\opc$-convexity by Proposition~\ref{prop:coconvex_sets}. Thus, $w\in S$ and consequently, at most $k+1$ vertices of $S$ belong to $U$. By hypothesis, we have that $|U|\geq k+2$. Consequently, there is at least one vertex of $U$ that does not belong to $S$. Since $N_D^+(u_j)=\{v\}$ and $S$ is an interval set of $D$ in the $\opc$-convexity, we deduce that $v\in S$. Since $v,w\in S$, note that any vertex in $\{f_i\mid i\in\{1,\ldots,m\}\}$ belongs to a directed $(v,w)$-path. Consequently, we can build an interval set $S'$ from $S$ such that $S'\subseteq \{f_i\mid i\in\{1,\ldots, m\}\}\cup\{v,w\}$ by exchanging each vertex $u\in S\cap \{u_j\mid j\in\{1,\ldots, n\}\}$ by a vertex $f_i$ such that $j\in F_i$ (recall that $\bigcup \mathcal{F} = U)$. Note that $S'$ is indeed an interval set of $D$ as $v\in S$ and $|S'|\leq k+2$. Let $I = \{i\in\{1,\ldots,m\}\mid f_i\in S'\}$. Note that $\mathcal{F'} = \{F_i\mid i\in I\}$ satisfies $\bigcup \mathcal{F'} = U$ and $|\mathcal{F'}|\leq k$. Consequently, $\mathcal{I}$ is a YES-instance.
\end{proof}

\newcommand{\proboinpsparam}{\pname{Parameterized oriented $\opsc$-interval}\xspace}

\paraProblemDef{\proboinpsparam}{Oriented graph $D$ and positive integer $k$.}{$k$}{Is $\oinps(D)\leq k$?}{pro:oinpsparam}
    
\begin{cor}
\label{cor:oinps_split_w_hard}
 \nameref{pro:oinpsparam} is \W[2]-hard, even if $D$ has an underlying split graph. 
\end{cor}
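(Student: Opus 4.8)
The plan is to reuse the reduction and the oriented graph $D$ built in the proof of Proposition~\ref{prop:oinps_w-hard_split}: starting from an instance $\mathcal{I}=(U,\mathcal{F},k)$ of \nameref{pro:SetCover}, I would take exactly the same $D$---whose underlying graph is split with clique $\{v\}\cup\{f_i\mid i\in\{1,\ldots,m\}\}$ and stable set $\{w\}\cup\{u_j\mid j\in\{1,\ldots,n\}\}$---together with the same target value $k+2$. It then suffices to show that $\mathcal{I}$ is a YES-instance if and only if $\oinps(D)\leq k+2$.

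The one thing that genuinely needs checking, and the only place where $\opsc$ could behave differently from $\opc$, is whether the length-two paths exploited in that reduction are \emph{shortest}. The reduction uses only paths of the two shapes $v\to f_i\to w$ and $f_i\to u_j\to v$. Both are shortest: there is no arc $(v,w)$ (indeed $w$ is a sink and $vw$ is not an edge), and the edge between $v$ and $f_i$ is oriented $(v,f_i)$, so there is no arc $(f_i,v)$. Hence neither pair of endpoints admits a length-one shortcut, and each such path certifies membership in $\oifps$ exactly as it does in $\oifp$.

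Granting this, the forward direction is inherited verbatim: if $\mathcal{F}'=\{F_i\mid i\in I\}$ covers $U$ with $|I|\leq k$, then $S=\{v,w\}\cup\{f_i\mid i\in I\}$ produces every remaining $f_i$ through the shortest path $v\to f_i\to w$ and every $u_j$ through a shortest path $f_i\to u_j\to v$ with $i\in I$ and $j\in F_i$; thus $\oifps(S)=V(D)$ and $\oinps(D)\leq k+2$. For the converse I would argue abstractly: every shortest directed path of length two is in particular a directed path of length two, so $\oifps(S)\subseteq\oifp(S)$ for all $S$, whence any interval $\opsc$-set is also an interval $\opc$-set. Therefore $\oinp(D)\leq\oinps(D)\leq k+2$, and the backward direction of Proposition~\ref{prop:oinps_w-hard_split} forces $\mathcal{I}$ to be a YES-instance.

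Since the map $k\mapsto k+2$ is an FPT reduction and the underlying graph of $D$ stays split, \W[2]-hardness of \nameref{pro:SetCover} transfers to \nameref{pro:oinpsparam} on split underlying graphs. I expect no real obstacle beyond the shortest-path verification of the second paragraph; once the absence of the arcs $(v,w)$ and $(f_i,v)$ is confirmed, the two convexities coincide on all relevant paths and every other step of the previous proof carries over without change.
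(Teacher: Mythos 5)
Your proposal is correct and takes essentially the same route as the paper: the paper's proof of this corollary is exactly the observation that every length-two path used in the reduction of Proposition~\ref{prop:oinps_w-hard_split} is a shortest path, which you verify concretely via the absence of the arcs $(v,w)$ and $(f_i,v)$. Your additional remark that $\oifps(S)\subseteq\oifp(S)$, so any interval $\opsc$-set is an interval $\opc$-set, is just a clean packaging of the converse direction and introduces no new idea beyond the paper's argument.
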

\begin{proof}
    It suffices to observe in the proof of Proposition~\ref{prop:oinps_w-hard_split} that any path on 3 vertices that is used in the reduction is indeed a shortest path.
\end{proof}

\section{Algorithms for graphs with bounded clique-width}
\label{subsec:coucelle}

In this section, we define an interval or a hull set in the $\opc$ and $\opsc$ convexities of a given oriented graph $D$ in terms of monadic second-order logic. Thus, we obtain polynomial-time algorithms to compute these parameters for graphs with bounded clique-width. 

We start with a review about monadic second-order formulas on graphs. For more details, see~\cite{courcelle2}. We will use lowercase variables $x,y,z,\dots$ (resp. uppercase variables $X,Y,Z,\dots$) to denote vertices (resp. subsets of vertices) of graphs. The atomic formulas are $x=y$ and $x\in X$. The predicate $\arc(x,y)$ defines an arc from $x$ to $y$.  The set $MSO_1$ of monadic-second order is the set of formulas consisted from atomic formulas with Boolean connectives $\wedge, \vee, \Longrightarrow, \Longleftrightarrow$,  element quantifications $\exists x$ and $\forall x$ and set quantifications $\exists X$ and $\forall X$. A variable that is not linked to a quantifier is a free variable. We write $\varphi(x_1,\dots,x_m,Y_1,\dots,Y_q)$ to express a formula under the free variables $x_1,\dots,x_m,Y_1,\dots,Y_q$. A (di)graph $G$ models $\varphi$ if, when substituting the element variables by vertices of $G$, and set variables by vertex subsets of $G$, then $\varphi$ is always true. When it happens, we denote by $G\models \varphi (x_1,\dots,x_m,Y_1,\dots,Y_q)$. For a given $\varphi(x_1,\dots,x_m,Y_1,\dots,Y_q)$ formula in $MSO_1$, let $opt_\varphi\in\{\min,\max\}$ be the problem of determining the value $opt_\varphi \left( \sum_{i=1}^q |Z_i|\right)$ such that $G\models \varphi(x_1,\dots,x_m,Z_1,\dots,Z_q)$, among all possible choices of $Z_1, \ldots, Z_q\subseteq V(G)$.

The \emph{clique-width} is a graph complexity measure introduced by Coucelle and Olariu~\cite{COURCELLE200077}. Briefly, following how it is defined by \cite{MAMADOURUDINI}, a graph has \emph{clique-width} at most $k$ if it can be obtained from the empty graph by successively applying the operations (1) the disjoint union of two graphs, (2) add all edges between vertices labeled $i$ and vertices $j$, $i\neq j$, with $i,j\in \{1,\ldots,k\}$, (3) relabel the vertices labeled $i$ into $j$, with $i,j\in \{1,\ldots,k\}$ (4) creation of a graph with a single vertex labeled $i\in \{1,\dots,k\}$. The expression representing the sequence of operations to build a given graph is called the \emph{clique-width expression}. It is well-known that graphs of bounded tree-width, in particular trees, have bounded clique-width.


\begin{thm}(\!\!\cite{courcelle2})\label{mamadou}
    Let $k$ be a fixed constant. For every $\textrm{MSO}_1$~formula written as $\varphi(x_1,\ldots,x_m,Y_1,\ldots,Y_q)$, the problem $opt_{\varphi}$, for $opt_{\varphi}\in \{\text{min, max}\}$, can be solved in linear time in any graph of clique-width at most $k$, provided the clique-width expression is given. If the clique-width expression is not given, one can compute one using at most $2^{k+1}$ labels in cubic time. 
\end{thm}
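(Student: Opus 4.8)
The plan is to read Theorem~\ref{mamadou} as the clique-width incarnation of Courcelle's meta-theorem (the optimization, or LinEMSO, version) and to prove it by a bottom-up dynamic program over the parse tree of a $k$-expression. First I would recall that, by the definition of clique-width given just above, a graph of clique-width at most $k$ is produced by the four operations (single labelled vertex, disjoint union, label-join, relabel), so any clique-width expression is naturally a rooted tree: its leaves create single labelled vertices and its internal nodes are union/join/relabel operations, with each node $t$ associated to the labelled subgraph $G_t$ it builds. The entire algorithm will run bottom-up on this tree, spending constant time per node.

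The key object is the \emph{MSO type} of a labelled graph relative to the fixed formula $\varphi$ and its quantifier rank. I would invoke the standard Feferman--Vaught / Ehrenfeucht--Fra\"iss\'e machinery: for a bounded quantifier rank and a bounded number of free element and set variables, there are only finitely many inequivalent types, and their number depends only on $k$ and on $\varphi$, never on the order of the graph. The decisive compositionality lemma is that the type of $G_t$ is determined by the types of the children of $t$ together with the operation performed at $t$; this reduces ``$G \models \varphi$'' to ``the type computed at the root lies in the accepting set.'' In automaton language, one builds a finite deterministic bottom-up tree automaton whose states are the relevant types and whose transition function is exactly the type-composition rule for union, join and relabel.

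For the optimization version $opt_\varphi$ I would enrich each state so that, besides the type, it stores the optimal (minimum or maximum) value of $\sum_{i} |Z_i|$ attainable by an assignment of the free set variables realising that type in $G_t$; the free element variables $x_1,\dots,x_m$ are handled by guessing, at the unique leaf that creates each such vertex, which variable (if any) it instantiates. At a disjoint-union node the stored values add, at join and relabel nodes they are copied while the type is updated, and ties within a type are broken by keeping the best value. Propagating these pairs (type, optimal value) bottom-up, one reads off $opt_\varphi(\sum_{i}|Z_i|)$ from the accepting states at the root. Since the number of states is a constant depending only on $k$ and $\varphi$, and each node is processed in constant time, the total running time is linear in the size of the expression, hence in $n$; this establishes the first sentence.

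For the second sentence, where the expression is not supplied, I would appeal to the clique-width approximation of Oum and Seymour: in cubic time one computes a $(2^{k+1})$-expression for every graph of clique-width at most $k$, and feeding this expression into the linear-time automaton above yields the claimed cubic-time algorithm. The main obstacle, and the technical heart of the argument, is the compositionality/finiteness step: proving that MSO types form a finite congruence under the clique-width operations and writing out the type-composition rule for the join operation explicitly, which is precisely where the bounded number of labels is used. Everything else --- the leaf bookkeeping for element variables, the additive combination of optimal weights, and the invocation of the approximation algorithm --- is routine once this congruence is in hand.
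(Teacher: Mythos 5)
The paper offers no proof of this theorem: it is imported verbatim from the cited reference \cite{courcelle2}, so there is no in-paper argument to compare against. Your sketch is the standard proof of exactly that result --- bottom-up dynamic programming over the parse tree of a $k$-expression, with finitely many MSO types (Feferman--Vaught compositionality under union, join and relabel) enriched by optimal values of $\sum_i |Z_i|$, followed by the cubic-time computation of a $(2^{k+1})$-label expression when none is given --- and it is essentially correct and faithful to the cited source. The only minor nit is attributional: the cubic-time bound with $2^{k+1}$ labels comes from the later rank-width-based algorithms (Oum; Hlin\v{e}n\'y--Oum) rather than the original Oum--Seymour approximation, whose running time is worse, but this does not affect the structure of your argument.
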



We emphasize that Theorem~\ref{mamadou} holds for directed graphs when considering the clique-width of its underlying graph. The definition of a graph in terms of $\textrm{MSO}_1$ has as edges ordered pairs of vertices and the authors emphasize that we may consider arcs instead of edges. So, the term \emph{graph} in Theorem~\ref{mamadou} actually may be considered as an oriented graph and its \emph{width} is in fact the clique-width of its underlying graph. 

The following examples are formulas that express that there is a two-path linking $x$ and $y$; and also when that $x$ and $y$ are at distance two in a directed graph $D$ having a directed path $(x,z,y)$ in $D$:
\[2path(x,z,y):= \exists \arc(x,z)\wedge \exists \arc(z,y);\]
\[Ind2path(x,z,y):= \exists \arc(x,z)\wedge \exists \arc(z,y)\wedge\neg \exists \arc(x,y).\]

\begin{lem}
\label{lem:msol_interval}
We can describe an oriented interval set of a given oriented graph $D$ in the convexities $\opc$ and $\opsc$ with $\text{MSO}_1$.
\end{lem}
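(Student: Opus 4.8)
The plan is to express, in $\mathrm{MSO}_1$, the property that a vertex subset $S$ is an interval set in the $\opc$-convexity (and, separately, in the $\opsc$-convexity). Recall that $S$ is an interval $\opc$-set exactly when $\oifp(S) = V(D)$, i.e. every vertex lies in $S$ or is the internal vertex of some directed two-path whose endpoints both lie in $S$. The key observation is that this is a \emph{one-step} interval condition (not a closure), so no iteration or reachability needs to be encoded --- we only quantify over pairs of endpoints in $S$ and a single intermediate vertex. This is why the interval case is genuinely easier than the hull case and can be captured by a short formula using the auxiliary predicates $2path$ and $Ind2path$ already introduced.

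First I would write down the membership predicate. For the $\opc$-convexity define
\[
\varphi_{\opc}(S) := \forall z\,\Bigl( z \in S \;\vee\; \exists x\,\exists y\, \bigl( x\in S \wedge y\in S \wedge 2path(x,z,y) \bigr)\Bigr),
\]
which asserts that every vertex $z$ is either in $S$ or is the middle of a directed $(x,y)$-path of length two with both endpoints in $S$. For the $\opsc$-convexity I would use the identical formula but replace $2path$ by $Ind2path$, since in that convexity only \emph{shortest} two-paths count, i.e. those with no arc directly from $x$ to $y$:
\[
\varphi_{\opsc}(S) := \forall z\,\Bigl( z \in S \;\vee\; \exists x\,\exists y\, \bigl( x\in S \wedge y\in S \wedge Ind2path(x,z,y) \bigr)\Bigr).
\]
Both formulas use only the atomic predicates $=$, $\in$, the defined arc predicate $\arc$, Boolean connectives, and first-order (element) quantifiers, together with one free set variable $S$; hence they lie in $\mathrm{MSO}_1$. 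Treating $S$ as the single free set variable $Y_1$ in the notation of Theorem~\ref{mamadou}, minimizing $|S|$ subject to $D \models \varphi_{\opc}(S)$ returns exactly $\oinp(D)$, and likewise for $\opsc$.

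The remaining step is purely a verification that the formula faithfully captures the definition of an interval set: I would check that $D \models \varphi_{\opc}(S)$ if and only if $\oifp(S) = V(D)$, which is immediate from unfolding the definition of $\oifp$ as the union over endpoint pairs of their two-path intervals, and similarly for $\opsc$ using the ``shortest'' clause that is encoded by the extra conjunct $\neg\exists\arc(x,y)$ inside $Ind2path$. There is essentially no obstacle here: the only subtlety worth noting is that we must allow $x=y$ to be excluded implicitly (a two-path requires distinct endpoints with a genuine intermediate vertex), but since $2path(x,z,y)$ already forces arcs $(x,z)$ and $(z,y)$ in an \emph{oriented} graph, $z$ is automatically distinct from both $x$ and $y$, and the orientation prevents $x=y$ via a single arc being reused. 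Thus both formulas are well-defined $\mathrm{MSO}_1$ descriptions and the lemma follows directly. The main thing to be careful about in the writeup is simply asserting membership of the formulas in $\mathrm{MSO}_1$ rather than $\mathrm{MSO}_2$, which holds precisely because we never quantify over sets of arcs.
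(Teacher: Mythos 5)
Your proposal is correct and is essentially the paper's own proof: both write the one-step interval condition as the $\mathrm{MSO}_1$ formula $\forall z\,(z\in S \vee \exists x\,\exists y\,(x\in S\wedge y\in S\wedge 2path(x,z,y)))$ for the $\opc$-convexity, and the same formula with $Ind2path$ in place of $2path$ for the $\opsc$-convexity. Your extra remarks on the faithfulness of the encoding and on why no iteration is needed are sound but go beyond what the paper bothers to verify.
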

\begin{proof}
By definition, $S$ is an interval set in the $\opc$ if any vertex $z$ in $V(D)\setminus S$ lies in a directed two path between two vertices of $S$. Thus, we may represent it as:
\[IntervalSet_{\opc}(S):= \forall z (z\in S \vee \exists x \exists y (x\in S \wedge y\in S \wedge 2path(x,z,y))).\]

Similarly, one may deduce how to do it for the $\opsc$ convexity:

\[IntervalSet_{\opsc}(S):= \forall z (z\in S \vee \exists x \exists y (x\in S \wedge y\in S \wedge ind2path(x,z,y))).\]
\end{proof}

\begin{lem}
\label{lem:msol_hull}
We can describe an oriented hull set of a given oriented graph $D$ in the convexities $\opc$ and $\opsc$ with $\text{MSO}_1$.
\end{lem}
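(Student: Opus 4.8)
The plan is to avoid expressing the iterated interval operator directly---which $\text{MSO}_1$ cannot do, as it has no built-in least-fixed-point construction over vertex sets---and instead to exploit the characterization of the convex hull as an intersection of convex sets. Recall that $\ohullp(S)$ is the inclusion-wise smallest $\opc$-convex set containing $S$, equivalently the intersection of all $\opc$-convex sets containing $S$ (this is exactly what conditions \textbf{(C1)}--\textbf{(C2)} guarantee). Since $V(D)$ is itself convex, we have $\ohullp(S) = V(D)$ if and only if the \emph{only} $\opc$-convex set containing $S$ is $V(D)$ itself. This reformulation trades the iteration for a single universal quantification over candidate convex sets, which is precisely what a set quantifier in $\text{MSO}_1$ provides.

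First I would express convexity as a local first-order closure condition. A set $C$ is $\opc$-convex precisely when applying $\oifp$ to $C$ adds nothing; since $C\subseteq \oifp(C)$ always holds, it suffices to forbid new vertices:
\[\text{Conv}_{\opc}(C) := \forall x\, \forall y\, \forall z\, \big((x\in C \wedge y\in C \wedge 2path(x,z,y)) \Rightarrow z\in C\big).\]
For the $\opsc$ convexity I would instead use the predicate $Ind2path$ already introduced above:
\[\text{Conv}_{\opsc}(C) := \forall x\, \forall y\, \forall z\, \big((x\in C \wedge y\in C \wedge Ind2path(x,z,y)) \Rightarrow z\in C\big).\]

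With convexity captured, the hull-set condition becomes a universal second-order statement: $S$ is a hull set if and only if every convex superset of $S$ is all of $V(D)$. Thus I would define
\[\text{HullSet}_{\opc}(S) := \forall C\, \big((\forall w\,(w\in S \Rightarrow w\in C) \wedge \text{Conv}_{\opc}(C)) \Rightarrow \forall v\,(v\in C)\big),\]
and analogously $\text{HullSet}_{\opsc}(S)$ using $\text{Conv}_{\opsc}$ in place of $\text{Conv}_{\opc}$. Both formulas use only element and set quantifiers, the membership and equality atoms, the $\arc$ predicate (buried inside $2path$ and $Ind2path$), and Boolean connectives, so each lies in $\text{MSO}_1$, as required.

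The only genuinely delicate point---which is a matter of justification rather than a computational obstacle---is the equivalence between the fixpoint definition of the hull and the intersection characterization driving the formula above. This is standard for interval convexities: by \textbf{(C1)}--\textbf{(C2)} the intersection of all convex sets containing $S$ is convex and contains $S$, hence coincides with $\ohullp(S)$; combined with the fact that $V(D)$ is convex, it yields that $\ohullp(S)=V(D)$ exactly when no proper convex set contains $S$. Once this is in hand, the correctness of $\text{HullSet}_{\opc}$ and $\text{HullSet}_{\opsc}$ is immediate, and together with Theorem~\ref{mamadou} it delivers the desired cubic-time algorithm on graphs of bounded clique-width.
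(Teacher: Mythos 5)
Your proposal is correct and follows essentially the same route as the paper: both express convexity as a local $\text{MSO}_1$ closure condition (your $\text{Conv}$ is the contrapositive of the paper's $Closed$) and then characterize a hull set by universally quantifying over convex supersets, asserting that the only convex set containing $S$ is $V(D)$. Your additional justification via the intersection characterization from \textbf{(C1)}--\textbf{(C2)} is exactly the equivalence the paper invokes in one line at the end of its proof.
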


\begin{proof}
Initially, note that $ X \subseteq Y $ and $ X \subsetneq Y $ may be represented as $\forall x(x\in X\Longrightarrow x\in Y)$ and $\exists y(y\in Y\wedge y\notin X)\wedge X \subseteq Y $, respectively. Let us now present formulas to represent that a given set $S$ is convex. In the case of $\opc$-convexity we represent such property by 
\[Closed_{\opc}(X):= {\forall x,y}(x\in X \wedge y\in X\Rightarrow \neg \exists z(2path(x,z,y)\wedge z\notin X)).\]

Similarly, we present a formula to represent a convex set in the $\opsc$-convexity.
\[Closed_{\opsc}(X):={\forall x,y}(x\in X \wedge y\in X\Rightarrow \neg \exists z(
Ind2path(x,z,y)\wedge z\notin X)).\]

In conclusion, we present below the formulas that represent a hull set in $\opc$ and $\opsc$-convexities. Note that an equivalent definition of a hull set $S$ is that the unique convex set containing $S$ is $V(D)$, since its convex hull must be $V(D)$.

\[HullSet_{\opc}(S):= {\forall Z}(S \subseteq Z\wedge Z\subsetneq V(D)\Longrightarrow \neg Closed_{\opc}(Z) )\]

\[HullSet_{\opsc}(S):={\forall Z}(S \subseteq Z\wedge Z\subsetneq V(D)\Longrightarrow \neg Closed_{\opsc}(Z) ). \]
\end{proof}

\begin{prop}
Given an oriented graph $D$ on $n$ vertices such that the clique-width of its underlying graph is $k$, then there are algorithms that compute $\oinp(D)$,
$\oinps(D)$, $\ohnp(D)$ and $\ohnps(D)$ in cubic time.
\end{prop}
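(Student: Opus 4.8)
The plan is to combine the logical descriptions of interval and hull sets established in Lemmas~\ref{lem:msol_interval} and~\ref{lem:msol_hull} with the optimization version of Courcelle's theorem stated in Theorem~\ref{mamadou}. The key observation is that each of the four parameters asks for a \emph{minimum-size} vertex subset satisfying an $\textrm{MSO}_1$-expressible property, which is exactly the form of the problem $opt_\varphi$ with $opt_\varphi=\min$ and a single free set variable.

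Concretely, I would proceed as follows. First, for each parameter, fix the appropriate formula with one free set variable $S$: for $\oinp(D)$ take $\varphi:=IntervalSet_{\opc}(S)$, for $\oinps(D)$ take $\varphi:=IntervalSet_{\opsc}(S)$, for $\ohnp(D)$ take $\varphi:=HullSet_{\opc}(S)$, and for $\ohnps(D)$ take $\varphi:=HullSet_{\opsc}(S)$. By Lemmas~\ref{lem:msol_interval} and~\ref{lem:msol_hull}, each of these is a valid $\textrm{MSO}_1$ formula, and by definition the minimum size of a set $S$ with $D\models\varphi(S)$ is precisely the corresponding parameter. Hence computing the parameter is exactly solving $opt_\varphi$ with $opt_\varphi=\min$ and $q=1$ free set variable (and $m=0$ free element variables), so $\sum_{i=1}^q|Z_i|=|S|$ is the quantity being minimized.

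Next I would invoke Theorem~\ref{mamadou}. Since the clique-width of the underlying graph of $D$ is the fixed constant $k$, and since (as noted in the paragraph following Theorem~\ref{mamadou}) the theorem applies to oriented graphs using the clique-width of the underlying graph and the predicate $\arc(x,y)$, the problem $opt_\varphi$ can be solved in linear time once a clique-width expression is given. If no clique-width expression is provided in the input, Theorem~\ref{mamadou} also furnishes one using at most $2^{k+1}$ labels in cubic time. Running this preprocessing step and then the linear-time evaluation yields a total running time dominated by the cubic clique-width-expression computation, giving cubic-time algorithms for all four parameters.

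The only subtlety, rather than a genuine obstacle, is bookkeeping about the free variables: the formulas $IntervalSet$ and $HullSet$ each use exactly one free set variable $S$ and no free element variables, so the minimized quantity is simply $|S|$ and the hypotheses of Theorem~\ref{mamadou} are met verbatim. One should also remark that the constant hidden in the linear-time bound depends on $k$ and on the (fixed) size of the formula, which is why $k$ must be treated as a fixed constant; this is consistent with the statement. Thus the proof is essentially an application of Theorem~\ref{mamadou} to the four formulas, and I would keep it short, citing the two preceding lemmas for correctness of the formulas and Theorem~\ref{mamadou} for the running time.
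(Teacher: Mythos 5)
Your proposal is correct and follows exactly the paper's route: the paper's own proof is a one-line appeal to Theorem~\ref{mamadou} together with Lemmas~\ref{lem:msol_interval} and~\ref{lem:msol_hull}, which is precisely the argument you spell out. Your additional bookkeeping (one free set variable, $opt_\varphi=\min$, cubic time dominated by computing the clique-width expression) is a faithful elaboration of the same proof rather than a different approach.
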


\begin{proof}
It is a direct consequence of Theorem~\ref{mamadou} and Lemmas~\ref{lem:msol_interval} and~\ref{lem:msol_hull}.
\end{proof}

\begin{omit}

\section{Further Research}
\label{sec:further}

    \julio{Find a good place to put this next result:}
    \begin{prop}
        For any graph $G$ and for any $\xc\in\{\gc,\pc,\psc\}$, there is an acyclic orientation $D$ of $G$ such that $\ohnx(D)\leq \hn_{\xc}(G)$.
    \end{prop}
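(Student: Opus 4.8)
The plan is to fix a minimum $\xc$-hull set $S$ of the undirected graph $G$, with $|S|=\hn_{\xc}(G)$, and to build an \emph{acyclic} orientation $D$ in which $S$ is still a hull set of the corresponding oriented convexity; this immediately gives $\ohnx(D)\le |S|=\hn_{\xc}(G)$. To force acyclicity for free I would work with a real potential $\phi\colon V(G)\to\mathbb{R}$ and orient every edge $uv$ from its smaller to its larger $\phi$-value: any such orientation is acyclic since it is induced by the linear order given by $\phi$. The values of $\phi$ are chosen by following the \emph{undirected} generation of the hull of $S$: writing $I^0=S$ and $I^{\ell}=\ifxc(I^{\ell-1})$ (so $I^{k}=V(G)$ for some $k$), I assign arbitrary distinct values on $S$ and then, level by level, place each newly generated vertex so that a witnessing directed path through it is created.

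For the $\opc$- and $\opsc$-convexities I expect this to go through cleanly. When a vertex $z$ first appears at level $\ell$ it has two generators $a,b\in I^{\ell-1}$ (neighbours of $z$ that are the endpoints of a $P_3$, respectively a $P_3^*$, through $z$); since $a,b$ already received distinct values, I designate the one of smaller $\phi$-value as $a$ and set $\phi(z)$ strictly between $\phi(a)$ and $\phi(b)$, so that $D$ contains $a\to z\to b$, a directed path of length two. To check that $S$ is a hull set, suppose not and let $C$ be $\oxc$-convex with $S\subseteq C\subsetneq V(G)$; take the least level $\ell$ with $I^{\ell}\not\subseteq C$, so that $I^{\ell-1}\subseteq C$ and some $z\in I^{\ell}\setminus C$ has both generators $a,b\in C$. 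Since $a\to z\to b$, we get $z\in\oifp(C)$ (resp. $z\in\oifps(C)$), contradicting convexity. The only extra point for $\opsc$ is that the two $P_3^*$-generators $a,b$ are \emph{non-adjacent} in $G$, hence there is no arc between them and $a\to z\to b$ is indeed a \emph{shortest} directed path of length two, so the argument applies verbatim.

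For the geodetic convexity the same skeleton should work, but the witness now has to be a shortest \emph{directed} path, and this is where the main difficulty lies. A vertex $z$ entering the hull at level $\ell$ sits on a shortest undirected $(u,v)$-path $P$ with $u,v\in I^{\ell-1}$, and for the minimal-level argument I need $P$ (or the maximal subpath of $P$ whose interior avoids a given convex $C$) to be \emph{monotone} in $\phi$: a $\phi$-monotone undirected geodesic is automatically a shortest directed path, since directed distances are always at least undirected ones. Thus the geodetic case reduces to producing a single potential $\phi$ under which all the geodesics used in the generation of $V(G)$ from $S$ are simultaneously monotone. I would attempt this by extending $\phi$ one generation round at a time, inserting the new interior vertices of each witnessing geodesic strictly between the $\phi$-values of its already-placed vertices.

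The hard part will be guaranteeing global consistency: distinct witnessing geodesics may share vertices, and I must avoid forcing a vertex to lie both before and after another, which would make the induced precedence relation cyclic and preclude a linear order. The crux is therefore a lemma asserting that one can select the witnessing geodesics so that the relation ``$x$ precedes $y$ on some chosen geodesic'' is acyclic; a naive choice can fail (already in the four-cycle $C_4$ the two families of geodesics between opposite pairs cannot all be made monotone at once), so the selection must be tied to the generation from $S$ rather than to all geodesics of $G$. Once such an acyclic precedence is in hand it extends to a linear order, yielding the desired $\phi$ and an acyclic orientation in which $S$ is a directed geodetic hull set, so that $\ohng(D)\le|S|=\hng(G)$.
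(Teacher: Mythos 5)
For the $\opc$- and $\opsc$-convexities your argument is correct and is in essence the paper's own (sketched) proof: fix a minimum hull $\xc$-set $S$ of $G$, simulate the undirected generation of $\hullx(S)$, and orient so that each witnessing two-path becomes a directed path ($a\to z\to b$), which for $\opsc$ is automatically a \emph{shortest} directed two-path because the two generators are non-adjacent in $G$. Your potential $\phi$ is simply a linear order built on the fly where the paper instead keeps a partial acyclic orientation and orients one witnessing path per step; these are interchangeable, and your minimal-level convexity check is sound.

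The geodetic case, however, is where your proposal stops short of a proof, and the gap is genuine: you reduce everything to an unproved selection lemma (``the witnessing geodesics can be chosen so that the induced precedence relation is acyclic''). The missing idea is small but decisive, and it is exactly what makes the paper's one-path-at-a-time scheme work. When a vertex $z\notin H_{i-1}$ is generated by a geodesic $P$ between two vertices of the current set $H_{i-1}$, do not insert $P$ itself: walk from $z$ along $P$ in both directions until first hitting $H_{i-1}$, and insert the resulting subpath. A subpath of a geodesic is again a geodesic, its endpoints $u',v'$ lie in $H_{i-1}$, and \emph{all} of its interior vertices are new; hence the inserted path meets the already-placed vertices only in $u'$ and $v'$. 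You can then place its interior monotonically strictly between $\phi(u')$ and $\phi(v')$ (equivalently, in the paper's formulation, orient it from $u'$ to $v'$ if $u'$ already reaches $v'$ in the current acyclic partial orientation, and from $v'$ to $u'$ otherwise --- a single comparability check, well defined by acyclicity). Processing one path at a time and updating $H_i$ after each insertion, no two inserted paths ever share an interior vertex at insertion time, so the global-consistency obstruction you describe (your $C_4$ example) never arises: it only threatens if one tries to make whole generation rounds, or all geodesics, monotone simultaneously. Notably, you do invoke interior-avoiding subpaths, but only in the verification step, not in the construction, which is where they are needed. Combined with your (correct) observation that a monotone undirected geodesic is a shortest directed path because directed distances dominate undirected ones, this closes the geodetic case and yields $\ohng(D)\leq\hng(G)$. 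In fairness, the paper's own argument is explicitly a sketch and also leaves the interior-disjointness point implicit, but its sequential insertion scheme is precisely the route that fills your gap.
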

    \begin{sketch}
        The idea is to iteratively build $D$ from an acyclic partial orientation $D'$ of $G$. We take an unoriented path $P$ of $G$ and orient it so that $P$ is a directed path in $D$, in such a way that $D$ is still acyclic while ensuring that $\ohnx(D)\leq \hn_{\xc}(G)$.
        
        Let $S$ be a minimum hull $\xc$-set of $G$. Note that $\hullx(S)$ is obtained by iteratively applying the interval function over $S$ until $\ifxc^k(S)=V(G)$. One iteration of the interval function adds, possibly, vertices that belong to distinct (shortest) $u,v$-paths (of length two). To build $D$, we will add such paths one by one.
        
        So one may consider that we can iteratively build $\hullx(S) = H_p$, for some $p\geq k$, as follows. At a step zero, we add $S$ to $H_0$. At step $i\geq 1$, we select \textbf{one} (shortest) $u,v$-path (of length two) $P_i$, for some $u,v\in H_{i-1}$ and add the inner vertices of $P_i$ to $H_i = H_{i-1}\cup V(P_i)$. 
        
        To build $D = D_p$, we construct partial acyclic orientations $D_i$ of $G[H_i]$, for $i\in\{1,\ldots,p\}$. First we take any acyclic orientation $D_0$ of $G[H_0]$. 
        At step $i\geq 1$, we check whether the endpoints $u,v\in V(P_i)$ of $P_i$ are comparable in $D_{i-1}$, i.e. if there is some directed $(u,v)$-path $Q_{\overrightarrow{uv}}$ or some directed $(v,u)$-path $Q_{\overrightarrow{vu}}$ in $D_{i-1}$. Note that we cannot have both $Q_{\overrightarrow{uv}}$ and $Q_{\overrightarrow{uv}}$, as  $D_{i-1}$ is acyclic. If $Q_{\overrightarrow{uv}}$ exists, then we orient $P_i$ from $u$ to $v$, otherwise we orient $P_i$ from $v$ to $u$. Note that $D_i$ is then acyclic. Moreover, note that since $P_i$ is a directed path in $D$, we have that the inner vertices of $P_i$ are added to $\ohullx(S)$ at step $i$, for each $i\in\{1,\ldots, p\}$. 
        Consequently, $\ohnx(D)\leq \hnx(G)$ as $S$ is a hull $\oxc$-set of $D$.
    \end{sketch}
    
    \julio{Check papers on $\hn^-$, $\hn^+$, $\gn^-$ and $\gn^+$. They probably contain next proposition w.r.t. the geodetic convexity.}
    \begin{prop}
        There exists a graph $G$ and an orientation $D$ of $G$ such that $\hnx(G)=\inx(G)>\oinx(D)=\ohnx(D)$, for $\xc\in\{\gc,\psc\}$.
    \end{prop}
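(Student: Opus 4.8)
The plan is to produce a single graph $G$ together with one orientation $D$ that witnesses the strict inequality for both convexities at once. Fix an integer $n\geq 3$ and let $G$ have vertex set $\{u,v,w_1,\dots,w_n\}$ and edges $uv$, $uw_i$ and $vw_i$ for every $i\in\{1,\dots,n\}$; equivalently, $G$ is the join of the edge $uv$ with an independent set of size $n$. I would orient $G$ into $D$ by adding the arcs $(u,w_i)$ and $(w_i,v)$ for every $i$, together with the single arc $(v,u)$. This is a genuine orientation, as each edge of $G$ receives exactly one direction.

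First I would handle the undirected side. The key observation is that $N_G(w_i)=\{u,v\}$ for every $i$, while $u$ and $v$ are adjacent, so $d_G(u,v)=1$. Hence no $w_i$ can be an internal vertex of a shortest path of length two, since its only possible pair of endpoints $\{u,v\}$ is already adjacent; the same holds for the geodetic convexity, as $w_i$ is internal to no geodesic at all. Therefore, for both $\xc\in\{\gc,\psc\}$, each $w_i$ lies in every interval $\xc$-set and every hull $\xc$-set, so $\inx(G),\hnx(G)\geq n$. Conversely, taking $S=\{w_1,\dots,w_n\}$ suffices: both $\ifg(w_1,w_2)$ and $\ifps(w_1,w_2)$ equal $\{w_1,w_2,u,v\}$, because the two length-two paths $w_1uw_2$ and $w_1vw_2$ are shortest. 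Thus $\ifg(S)=\ifps(S)=V(G)$, so $S$ is an interval set, hence a hull set, and $\inx(G)=\hnx(G)=n$ for both convexities.

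Next I would treat the oriented side by showing that $\{u,v\}$ is already an interval $\xc$-set of $D$. Since the only arc between $u$ and $v$ is $(v,u)$, there is no directed $(u,v)$-path of length one, so each path $u\to w_i\to v$ is a shortest directed $(u,v)$-path of length two. Consequently every $w_i$ lies in $\oifps(\{u,v\})$, and a fortiori in $\oifg(\{u,v\})$ (for the geodetic convexity one also uses the arc $(v,u)$ as the shortest directed $(v,u)$-path). Hence $\oifps(\{u,v\})=\oifg(\{u,v\})=V(D)$, so $\{u,v\}$ is an interval, and therefore a hull, $\xc$-set. No single vertex can be an interval or hull set of a graph on more than one vertex, since a directed path of length two requires two distinct endpoints from the set; thus $\oinx(D)=\ohnx(D)=2$. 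Combining the two computations gives $\inx(G)=\hnx(G)=n>2=\oinx(D)=\ohnx(D)$ for every $n\geq 3$ and both $\xc\in\{\gc,\psc\}$.

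I expect the only subtle point to be explaining why orienting helps rather than hurts. The mechanism is the asymmetry introduced by the arc $(v,u)$: it destroys the direct $u\to v$ connection while leaving the detours $u\to w_i\to v$ as shortest directed paths, a situation impossible in the underlying simple graph, where $u$ and $v$ are merely adjacent and so block every length-two shortest path through a $w_i$. Making this asymmetry precise, and checking that it is compatible with the shortest-path requirement shared by the geodetic and distance-two conventions, is the step that deserves the most care; everything else reduces to the elementary interval computations above.
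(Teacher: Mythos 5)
Your construction is correct, and every step checks out: in $G$ each $w_i$ has $N_G(w_i)=\{u,v\}$ with $uv\in E(G)$, so no geodesic (and no shortest length-two path) has $w_i$ as an internal vertex, forcing all $n$ vertices $w_i$ into every interval and hull set for $\xc\in\{\gc,\psc\}$; conversely $\{w_1,\dots,w_n\}$ generates $u$ and $v$ in one step, so $\inx(G)=\hnx(G)=n$. On the oriented side, since the edge $uv$ carries only the arc $(v,u)$, each $u\to w_i\to v$ is a shortest directed $(u,v)$-path of length two, so $\oifg(\{u,v\})=\oifps(\{u,v\})=V(D)$, and since singletons are convex, $\oinx(D)=\ohnx(D)=2<n$. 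The paper proves the same statement far more economically: it takes $G=C_3$ and $D$ a directed $C_3$, where all pairs in $C_3$ are adjacent (so $\hnx(G)=\inx(G)=3$), while in the directed triangle any two vertices reach the third via a shortest directed path of length two (so $\oinx(D)=\ohnx(D)=2$), giving $3>2$. Your example is the same mechanism scaled up --- the arc replacing the edge $uv$ destroys the length-one shortcut and turns the detours through the $w_i$ into shortest directed paths --- but it buys something the paper's minimal example does not: the gap $\inx(G)-\oinx(D)=n-2$ can be made arbitrarily large, so your family shows the undirected parameters can exceed the oriented ones by any prescribed amount, a strictly stronger conclusion at the cost of a slightly longer verification.
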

    \begin{proof}
        It suffices to consider $G$ as a $C_3$ and $D$ as a directed $C_3$. Note that $\hnx(G)=\inx(G)=3>\oinx(D)=\ohnx(D)=2$.
    \end{proof}
    
\end{omit}

\section{Concluding remarks}\label{sec:conclusion}

    While we know exact values for the hull number of strong tournaments in the $\opc$, $\opsc$ and $\ogc$ convexities that imply polynomial-time algorithms to compute such values, the case of the interval number is more challenging. With respect to the $\opsc$ and $\ogc$ convexities, Proposition~\ref{prop:hull_tournament_sum} let us focus on the case of strong tournaments, as follows.
    
    \begin{prob}
        Can one compute  $\oing(T)$, when $T$ is a strong tournament, in polynomial time?
    \end{prob}
    
    For the case of $\opc$ convexity, it is wide open:
    
    \begin{prob}
    \label{prob:interval_p3_tournament}
        Can one compute $\oinp(T)$, when $T$ is a tournament, in polynomial time?
    \end{prob}
    However, one should notice that if a tournament is not strong, then any vertex that does not belong to the source nor the sink strong component lies in a directed $(u,v)$-path from any vertex $u$ in the source strong component to any $v$ in the sink strong component, meaning that focusing in the strong case may help.
    In Section~\ref{subsec:coucelle}, we used Theorem~\ref{mamadou} to compute $\oinp(D)$, $\oinps(D)$, $\ohnp(D)$ and $\ohnps(D)$ in cubic time, parameterized by the clique-width of the underlying graph of $D$. The geodetic case is harder. In the non-oriented case, Kante et al.~\cite{KMS19} showed that deciding whether $\hng(G)\leq k$ is $\W[1]$-hard parameterized by $k+tw(G)$. On the other hand, they provide an $\XP$ algorithm for graphs with bounded treewidth, which could probably be extended to the oriented case.
    
    \section*{Acknowledgements}
    
    Most of this work was done during a sabbatical year of J. Araujo and A. K. Maia at the AlgCo team (LIRMM, CNRS, Université de Montpellier, France) funded by CAPES 88887.466468/2019-00, Brazil.

\bibliographystyle{acm}
\bibliography{Oriented-Hull-Interval-ArXiv}
\newpage

\end{document}